\documentclass[leqno]{siamonline250211} 

\usepackage{amssymb}
\usepackage{graphicx} 
\usepackage{xspace}
\usepackage{multirow}
\usepackage{bbm}
\usepackage{Macros/mydef} 
\usepackage{algpseudocode}
\usepackage[numbers]{natbib}
\let\cite=\citet
\usepackage{enumerate}
\usepackage{booktabs}
\usepackage{graphicx}
\usepackage{subcaption}
\usepackage{todonotes}
\usepackage{thmtools}
\usepackage{cleveref}

\usepackage{graphicx}
\usepackage{subcaption}
\usepackage{array}
\usepackage{graphicx}
\usepackage{booktabs}
\usepackage{cite}

\ifpdf
  \DeclareGraphicsExtensions{.eps,.pdf,.png,.jpg}
\else
  \DeclareGraphicsExtensions{.eps}
\fi

\newsiamremark{remark}{Remark}
\newsiamremark{hypothesis}{Hypothesis}
\crefname{hypothesis}{Hypothesis}{Hypotheses}
\newsiamthm{claim}{Claim}

\headers{Graph-Poisson limiting}{Murtazo Nazarov}

\title{High-Order Invariant-Domain Preserving Continuous Finite Elements via Graph-Poisson Convex Limiting
  \thanks{Submitted to the editors \today.
\funding{This research is funded by Swedish Research Council (VR) under Grant Number 2021-04620.}}}

\author{
Murtazo Nazarov\thanks{
Division of Scientific Computing, Department of Information Technology,
Uppsala University, Uppsala 751 05, Sweden
(\email{murtazo.nazarov@uu.se}).
}
}

\usepackage{amsopn}

\usepackage{etoc}

\begin{document}




\maketitle

\begin{abstract} 
We develop a high-order invariant-domain preserving continuous finite element method for nonlinear scalar conservation laws. The method combines a residual-viscosity high-order discretization with a low-order invariant-domain scheme constructed on a fine $\polP_1$ submesh induced by the high-order nodal points. This separation avoids the restrictions caused by nonpositive lumped masses and overly wide high-order graph stencils. The high- and low-order updates are connected by a graph-Poisson flux reconstruction, which represents their difference as conservative antisymmetric graph fluxes. These fluxes are limited using convex limiting coefficients, followed by a capacity-based mass redistribution step that restores conservation while preserving the prescribed bounds whenever sufficient admissible capacity is available. Numerical experiments for smooth and nonsmooth scalar conservation laws demonstrate high-order accuracy in smooth regimes, robustness near discontinuities, and convergence to the entropy solution for challenging benchmarks.
\end{abstract}

\begin{keywords}
High-order method, graph-Poisson limiting, convex limiting, invariant domain preserving, discrete maximum principle, conservation
\end{keywords}

\begin{MSCcodes}
65M60, 35L65
\end{MSCcodes}

\pagestyle{myheadings} \thispagestyle{plain} \markboth{M. NAZAROV}{Graph-Poisson limiting}


\section{Introduction}
\label{Sec:introduction}
In this paper, we are interested in the numerical approximation of nonlinear scalar conservation laws of the form
\begin{equation}\label{eq:pde}
  \begin{cases}
    \p_t u + \DIV \bef(u) = 0, \quad (\bx,t) \in \Omega\times(0,T], \\
    u(\bx,0) = u_0(\bx), \quad \bx \in \Omega,
  \end{cases}
\end{equation}
where $\Omega\subset\polR^d$ is a bounded open domain, $d$ is the space dimension, $u_0$ is the prescribed initial data, and $T>0$ is the final time. We assume that \eqref{eq:pde} is equipped with appropriate boundary conditions. The flux $\bef:\calA\to\polR^d$ is assumed to be nonlinear and Lipschitz continuous on an invariant domain $\calA\subset\polR$. By an invariant domain we mean that, if the initial data takes values in $\calA$, then the exact entropy solution remains in $\calA$ for all later times.

The main objective of this work is to construct high-order finite element approximations of \eqref{eq:pde} that preserve the invariant-domain property. In particular, we are interested in methods that retain accuracy beyond second order in space. High-order invariant-domain preserving (IDP) schemes for hyperbolic problems are most commonly based on finite volume or discontinuous Galerkin (DG) discretizations. We refer the reader to \citep{Sanders_1988, Liu_Tadmor_1998, Zhang_2010} and the references therein for finite volume methods, and to \citep{Zhang_Xia_Shu_2012, Hajduk_2021, Panzer_2021} and the references therein for DG methods.

In contrast, comparable high-order IDP constructions for continuous finite element methods are less developed. Robust IDP continuous finite element schemes restricted to first- or second-order accuracy have been proposed in several works; see, for instance, \citep{Kuzmin_2002, Kuzmin_2005, Guermond_Nazarov_2014, Guermond_Nazarov_Popov_Yong_2014, Guermond_popov_second_order_2018} and the references therein. Most high-order limiting approaches rely on three main ingredients: a robust low-order IDP scheme, a high-order scheme that is not necessarily IDP, and a limiting procedure that combines the two. Many high-order finite element stabilizations, including entropy-viscosity and residual-viscosity methods, can be used as the high-order ingredient; see, for instance, \citep{Guermond_pasquetti_popov_JCP_2011, Nazarov_Larcher_2017, Dao_Nazarov_2022}. There are also well-established limiting frameworks such as flux-corrected transport (FCT) \citep{Zalesak_1979, Kuzmin_2002}, algebraic flux correction (AFC) \citep{Kuzmin_Moller_2005, Kuzmin_2020}, and convex limiting \citep{Guermond_Nazarov_Popov_Tomas_2018, Guermond_Nazarov_Popov_2024}. In these frameworks, the difference between a high-order discretization and a low-order IDP discretization is decomposed into conservative antidiffusive graph fluxes, which are then limited.

A major difficulty in extending this methodology to high-order continuous Lagrange finite elements lies in the construction of the low-order IDP scheme. The first-order graph-viscosity schemes developed in \citep{Guermond_Nazarov_2014, Guermond_Popov_2016}, as well as many low-order schemes used in FCT and AFC methods, are edge-based and are naturally suited to piecewise-linear spaces. For higher-order Lagrange spaces, the graph associated with the finite element stencil becomes wider, and the resulting low-order method can become excessively diffusive. A second difficulty concerns mass lumping. The low-order IDP scheme requires a lumped mass matrix with nonnegative entries \citep{Guermond_Popov_Yang_2017}. However, standard high-order Lagrange elements do not always have nonnegative lumped masses. For example, on simplices, the lumped masses are positive for $\polP_1$, nonnegative for $\polP_2$, and positive for $\polP_3$ and $\polP_5$, whereas negative lumped masses generally appear for other polynomial degrees. This creates a significant obstacle to constructing arbitrarily high-order IDP continuous finite element schemes using the standard graph-viscosity machinery directly on the high-order space.

The first contribution of this paper is to remove this bottleneck by separating the spaces used for the low- and high-order schemes. The high-order method is constructed in the desired polynomial space using residual viscosity (RV). The low-order IDP scheme, however, is built on a fine $\polP_1$ submesh whose vertices coincide with the nodal points of the high-order space. Thus the low-order method remains an optimal first-order IDP scheme on a piecewise-linear graph, while the high-order residual-viscosity method retains the accuracy of the high-order polynomial approximation.

The second contribution is a graph-Poisson flux reconstruction used in the limiting step. Since the low-order and high-order updates are defined with respect to different mass structures, their difference is not immediately available as conservative edge fluxes. We therefore represent the difference between the high-order residual-viscosity update and the low-order IDP update by conservative antisymmetric graph fluxes obtained from a graph-Poisson problem. These fluxes are then limited edgewise using convex limiting coefficients. The limiting step enforces the local invariant-domain bounds but may leave a remaining fine-$\polP_1$ mass defect. This defect is corrected by a secondary capacity-based mass redistribution step, which restores the fine-$\polP_1$ mass of the high-order update while preserving the prescribed bounds whenever sufficient admissible capacity is available. In this sense, the method follows the AFC philosophy: a low-order invariant-domain state is corrected by limited conservative antidiffusive graph fluxes. The distinctive feature of the present approach is that the antidiffusive fluxes are reconstructed by solving a graph-Poisson problem rather than being obtained directly from the difference of two algebraic operators.

The rest of the paper is organized as follows. In Section~\ref{sec:prelim}, we introduce the finite element spaces and meshes used throughout the paper. Sections~\ref{Sec:low} and~\ref{Sec:high} describe the low-order and high-order schemes, respectively. The main limiting algorithm, including the graph-Poisson flux reconstruction and convex limiting procedure, is presented in Section~\ref{Sec:limiter}. Finally, in Section~\ref{Sec:numerics}, we solve several benchmark problems to assess the accuracy, robustness, and invariant-domain properties of the proposed method.

\section{Preliminaries}\label{sec:prelim}
In this section, we present the triangulation of the domain of interest, the construction of finite element spaces, and some definitions that will be needed in our further analysis.

\subsection{Finite element approximations}
Let us denote by $\calT_h$ a subdivision of $\Omega$ into finite number of disjoint elements $K$ such that $\overline\Omega=\cup_{k\in \calT_h} \overline K$, where $\overline \Omega$ and $\overline K$ denotes the closure of $\Omega$ and $K$, respectively. We consider a family of shape-regular and conforming meshes $\{\calT_h\}_{h>0}$, where $h$ denotes the smallest diameter of all triangles of $\calT_h$. We denote by $\bg_K: \widehat K \mapsto K$ the affine mapping that maps the reference element $\widehat K$ to $K$. 

We define finite element spaces that we use. For each mesh $\calT_h$ we associate the following continuous approximation space:
\begin{equation}\label{eq:Xh}
  \calX_h := \{v_h: v_h \in \calC^0(\overline \Omega); \, \forall K\in \calT_h, \,
  v_h|_K \circ \bg_K \in \polP_k \},
\end{equation}
where $\polP_k$ is the set of multivariate polynomials of total degree at most $k\ge1$ defined over $\widehat K$. We also denote by $I$ the total Lagrange nodes in the mesh $\calT_h$, and $\{ \bN_1, \ldots, \bN_I \}$ is the collection of all the Lagrange nodes and $\{\varphi_1, \cdots, \varphi_I\}$ is the set of corresponding scalar-valued shape functions, and $S_i$ is the support of $\varphi_i$, and $S_{ij} = S_i \bigcap S_j$ is the intersection of the supports of $\varphi_i$ and $\varphi_j$. We often refer to the index set of the all degrees of freedoms by $\calV:=\{1,2,\ldots, I\}$.  We denote by $\calI(S_i)$ and $\calI(K)$ the set of all indices of the shape functions living at $S_i$ and cell $K$, respectively.

We denote by $m^{\polP_k}_{ij} := \int_\Omega \varphi_j(\bx) \varphi_i(\bx) \ud \bx$ the consistent mass matrix, and by $m^{\polP_k}_i := \sum_{j\in \calV} m^{\polP_k}_{ij} = \int_\Omega \varphi_i(\bx) \ud \bx$ the lumped mass matrix. Note that, we used here the partition of unity property of the test functions, \ie $\sum_{j\in\calV} \varphi_j = 1$.

Next, we denote by $\calT_h^{\polP_1, \text{fine}}$ the mesh whose vertices coincide with the nodes of the finite element space $\calX_h$. For example, $\calT_h^{\polP_1, \text{fine}} = \calT_h$, when the polynomial space is $k=1$, \ie $\polP_1$. Examples of patches $\calT_h^{\polP_1, \text{fine}}$ and $\calT_h$ for the case of $\polP_2$ and $\polP_3$ are shown in Figure~\ref{fig:meshes}.

In addition, we need a continuous piecewise linear finite element space on the fine mesh $\calTh^{\polP_1, \text{fine}}$ that will be useful in constructing the first-order viscosity used in this paper:
\begin{equation}\label{eq:Xhfine}
  \calX^{\polP_1, \text{fine}}_h := \{v_h: v_h \in \calC^0(\overline \Omega); \, \forall K\in \calT^{\polP_1, \text{fine}}_h, \,
  v_h|_K \circ \bg_K \in \polP_1 \},
\end{equation}
and let us denote the shape function of $\calX^{\polP_1, \text{fine}}_h$ by $\varphi_i^{\polP_1, \text{fine}}$. Therefore, the lumped mass matrix corresponding to this fine space is 
$	
m^{\polP_1, \textrm{fine}}_i := \int_{S^{\polP_1, \textrm{fine}}_i}\varphi_i^{\polP_1, \textrm{fine}} \ud \bx,
$
where $S_i^{\polP_1, \textrm{fine}} = \text{supp }(\varphi_i^{\polP_1, \text{fine}})$.
In order to make notation simpler, we further use
$
m^{\polP_1}_i
$
instead of
$
m^{\polP_1, \textrm{fine}}_i
$
and
$
S_i^{\polP_1}
$
instead of
$
S_i^{\polP_1, \textrm{fine}}.
$

We assume the fine $\polP_1$ mass and coarse $\polP_k$ mass are uniformly comparible, \ie there exist $c, C >0$ independent of $h$ such that
\begin{equation}\label{eq:uniform:mi}
  c\,m_i^{\polP_1}
  \le
  m_i^{\polP_k}
  \le
  C\,m_i^{\polP_1}
\end{equation}

\begin{figure}[h!]
  \centering
  \begin{subfigure}{0.25\textwidth}
    \centering
    \includegraphics[width=\textwidth]{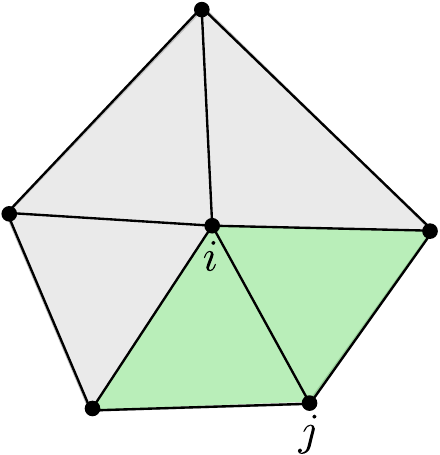}
    \caption*{$\polP_1$}
  \end{subfigure}
  \hspace{0.2in}
  \begin{subfigure}{0.25\textwidth}
    \centering
    \includegraphics[width=\textwidth]{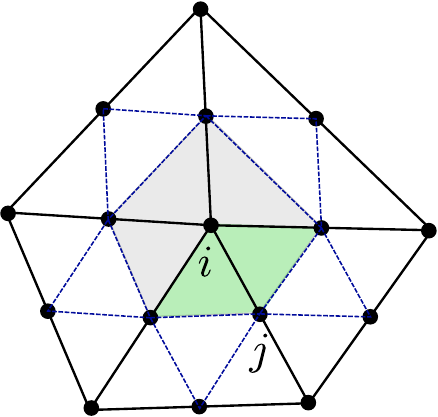}
    \caption*{$\polP_2$}
  \end{subfigure}
  \hspace{0.2in}
  \begin{subfigure}{0.25\textwidth}
    \centering
    \includegraphics[width=\textwidth]{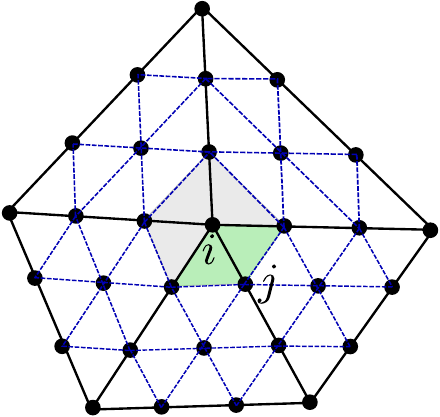}
    \caption*{$\polP_3$}
  \end{subfigure}
  \caption{
    Nodal distribution and sub-meshes for different polynomial spaces.
    The gray patch including the green triangles is $S_i^{\polP_1, {\rm fine}}$, and the green triangles are $S_{i,j}^{\polP_1, {\rm fine}}$, which are the support of the edge $(i,j)$.}
  \label{fig:meshes}
\end{figure}

\section{Low order scheme}
\label{Sec:low}
Let us denote the lumped mass matrix corresponding to the nodal points of the $\polP^k$ elements by $m_i^{\polP_1, \textrm{fine}}$. Let $u_h^n:= \sum_{i\in \calV} \sfU^n_i \varphi_i$ be approximate solution at $t=t_n$. 

Let $K \subset \calT_h^{\polP_1}$ be a fine $\polP_1$ triangle with local nodes $\calI(K) = \{i,j,l\}$. For every edge $(i,j) \subset K$, we define the vector
\[
  \bc_{ij}^K
  :=
  \int_K \varphi_i^{\polP_1}\nabla \varphi_j^{\polP_1}\,{\rm d}\bx .
\]
Since $\nabla\varphi_j^{\polP_1}$ is constant on $K$, this can be written as
$
  \bc_{ij}^K
  =
  m_{i,K}^{\polP_1}\nabla\varphi_j^{\polP_1}\big|_K ,
$
where $m_{i,K}^{\polP_1}= \int_K \varphi_i^{\polP_1} \ud \bx =  \frac{|K|}{d+1}$ is the $\polP_1$ mass matrix in the cell $K$.

Next, we define local graph-viscosity associated to the edge $(i,j)$ of the cell $K$ by
\[
  d_{ij}^{n,K}
  =
  \max_{l\in\calI(K)}
  \left|
    \bef'(\sfU_l^n)\cdot \bc_{ij}^K
  \right|.
\]
The symmetric low-order edge viscosity is then
\begin{equation}\label{eq:dij}
  d_{ij}^{{\rm L},n, K}
  :=
  \max
  \big(
  d_{ij}^{n,K},d_{ji}^{n,K}
  \big),
  \quad i\ne j,\quad i,j\in\calI(K),
\end{equation}

For every $i\in \calI(K)$, we define the local graph viscosity contribution by
\[
  b_K^{{\rm L}}(u_h^n,\varphi_i^{\polP_1})
  :=
  \sum_{j\in\calI(K),\, j\not=i}
  d_{ij}^{{\rm L}, n,K}
  \big(\sfU_i^n-\sfU_j^n\big).
\]
This edge-difference form is locally conservative: the contribution
associated with the edge $(i,j)$ is equal and opposite in the equations
for nodes $i$ and $j$.

The global artificial viscosity operator is obtained by summing all fine cell contributions containing node $i$:
\begin{equation}\label{eq:b_h}
  b_h^{{\rm L}}(u_h^n, \varphi_i^{\polP_1}) 
  = 
  \sum_{K\subset S_i^{\polP_1}} b_K^{{\rm L}}(u_h^n,\varphi_i^{\polP_1})
  =
  \sum_{K\subset S_i^{\polP_1}} \sum_{j\in\calI(K),\, j\not=i}
  d_{ij}^{{\rm L}, n,K}
  \big(\sfU_i^n-\sfU_j^n\big).
\end{equation}

Finally, we define the first order method in the fine $\polP_1$ mesh as follows:
\begin{equation}\label{eq:LO}
  m_i^{\polP_1}
  \frac{\sfU_i^{\mathrm L,n+1}-\sfU_i^n}{\dt}
  +
  \int_{S_i^{\polP_1}}
  \DIV\bef(u_h^n)
  \varphi_i^{\polP_1}\,\ud\bx
  +
  b_h^{\rm L}\big(u_h^n,\varphi_i^{\polP_1}\big)
  =0 ,
\end{equation}
for every $i\in \calV$.

\begin{proposition}
  The low-order scheme \eqref{eq:LO} is conservative with respect to the
  fine $\polP_1$ lumped mass, i.e.,
  \[
    \sum_{i\in\calV} m_i^{\polP_1}\sfU_i^{\mathrm L,n+1}
    =
    \sum_{i\in\calV} m_i^{\polP_1}\sfU_i^{n}.
  \]
\end{proposition}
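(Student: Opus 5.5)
Multiply \eqref{eq:LO} by $\dt$ and sum over all $i\in\calV$. The claimed identity then reduces to showing that the two spatial contributions vanish after summation:
\[
  \sum_{i\in\calV}\int_{S_i^{\polP_1}}\DIV\bef(u_h^n)\,\varphi_i^{\polP_1}\,\ud\bx = 0
  \qquad\text{and}\qquad
  \sum_{i\in\calV} b_h^{\rm L}\big(u_h^n,\varphi_i^{\polP_1}\big)=0 .
\]

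\emph{The viscous term.} Rewrite the double sum $\sum_i\sum_{K\subset S_i^{\polP_1}}$ as $\sum_{K\in\calT_h^{\polP_1,\text{fine}}}\sum_{i\in\calI(K)}$. Inside each fine cell $K$, the quantity
\[
  \sum_{i\in\calI(K)}\ \sum_{j\in\calI(K),\,j\ne i} d_{ij}^{{\rm L},n,K}\big(\sfU_i^n-\sfU_j^n\big)
\]
vanishes. The reason is that $d_{ij}^{{\rm L},n,K}=d_{ji}^{{\rm L},n,K}$ by the symmetric definition \eqref{eq:dij}, while $\sfU_i^n-\sfU_j^n$ is antisymmetric. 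Swapping $i$ and $j$ therefore shows that the sum equals its own negative. This is exactly the local conservation remark made after the definition of $b_K^{\rm L}$.

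\emph{The flux term.} Since $\varphi_i^{\polP_1}$ vanishes outside $S_i^{\polP_1}$, each integral may be taken over $\Omega$. Using the partition of unity $\sum_{i\in\calV}\varphi_i^{\polP_1}=1$ on the fine $\polP_1$ space, the sum becomes
\[
  \int_\Omega \DIV\bef(u_h^n)\,\ud\bx=\int_{\p\Omega}\bef(u_h^n)\cdot\bn\,\ud s .
\]
Here $u_h^n$ is continuous and piecewise polynomial, so $\bef(u_h^n)$ is Lipschitz and the divergence theorem applies.

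\emph{Main obstacle.} The boundary integral does not vanish in general. The statement must therefore be read under boundary conditions that make the net boundary flux zero, such as periodic conditions or compactly supported data away from $\p\Omega$. This is the natural reading of ``appropriate boundary conditions'' in the paper. The only genuinely delicate point is this interpretation. I would state the assumption explicitly: either the boundary flux vanishes, or the identity holds up to the boundary flux term $\dt\int_{\p\Omega}\bef(u_h^n)\cdot\bn\,\ud s$. With that assumption, summing the scheme gives $\sum_i m_i^{\polP_1}\sfU_i^{\mathrm L,n+1}=\sum_i m_i^{\polP_1}\sfU_i^n$.
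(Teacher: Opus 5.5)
Your proposal is correct and follows the same route as the paper: multiply \eqref{eq:LO} by $\dt$, sum over $i\in\calV$, use the partition of unity $\sum_i\varphi_i^{\polP_1}=1$ for the flux term, and use the symmetry $d_{ij}^{{\rm L},n,K}=d_{ji}^{{\rm L},n,K}$ against the antisymmetric differences $\sfU_i^n-\sfU_j^n$ for the viscous term. Your explicit remark that the flux term leaves $\int_{\partial\Omega}\bef(u_h^n)\cdot\bn\,\mathrm{d}s$, which must vanish by the boundary conditions for the identity to hold exactly, clarifies an assumption that the paper's one-line proof leaves implicit under ``appropriate boundary conditions.''
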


\begin{proof}
  Multiplying \eqref{eq:LO} by $\dt$ and summing over all $i\in\calV$ and using the partition of unity property of $\varphi_i$, and conservation properties of
  $
  b_h^\textrm{L}(u^n_h, \varphi_i^{\polP_1})
  $
  proves the statement. 
\end{proof}

  Let us denote
  \begin{equation}\label{eq:aij}
    a_{ij}^{n,K}
    :=
    \int_K
    \bef'(u_h^n)\cdot \nabla \varphi_j^{\polP_1}
    \varphi_i^{\polP_1}\,\mathrm d\bx,
  \end{equation}
  and
  \begin{equation}\label{eq:gij}
    \gamma_{ij}^{n}
    :=
    \frac{\dt}{m_i^{\polP_1}}
    \sum_{K\subset S_{ij}^{\polP_1}}
    \left(
      d_{ij}^{{\rm L},n,K}
      -
      a_{ij}^{n,K}
    \right),
    \quad i\ne j.
  \end{equation}
  
  \begin{theorem}[Discrete Maximum Principle]\label{Thm:Min_Max_first_order}
    Let $u_h^n=\sum_{i\in \cal V} \sfU_i^n \varphi_i$ be an approximation at time $t_n$ and $\sfU_i^{{\rm L},n+1}$ be as in \eqref{eq:LO}. Assume that the time-step $\dt$ satisfies the following CFL condition
    \begin{equation}\label{eq:cfl}
      \max_{i\in\calV} \sum_{j \in \calI( S_i^{\polP_1}), j\not=i}
      \gamma^n_{ij}(\dt)
      \le {\rm cfl},
    \end{equation}
    where $0 < \rm cfl \le 1$ is a CFL number.  
  Then for any $i\in \calV$ we have
  \begin{equation}\label{eq:UL_max}
    \sfU_i^{n,\min} \le \sfU_i^{{\rm L},n+1}
    \le \sfU_i^{n,\max},
  \end{equation}
  where 
  \begin{equation}\label{eq:Umax-Umin}
    \sfU_i^{n,\min}:=\min_{j\in\calI(S_i^{\polP_1})}\sfU_j^{n},\quad \textrm{and} \quad \sfU_i^{n,\max}:=\max_{j\in\calI(S_i^{\polP_1})}\sfU_j^{n}.
  \end{equation}
\end{theorem}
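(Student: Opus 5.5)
Rewriting \eqref{eq:LO} so that $\sfU_i^{{\rm L},n+1}$ appears as a convex combination of the neighbouring values $\sfU_j^n$, $j\in\calI(S_i^{\polP_1})$, gives the bound \eqref{eq:UL_max} at once. The steps are as follows.

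First, we express the flux term by the coefficients \eqref{eq:aij}. We interpret $\bef(u_h^n)$ through its fine $\polP_1$ representation, $\DIV\bef(u_h^n)|_K=\sum_{j\in\calI(K)}\bef'(u_h^n)\cdot\nabla\varphi_j^{\polP_1}\sfU_j^n$; this is the linearisation implicit in the definition of $a_{ij}^{n,K}$. Then
\[
\int_{S_i^{\polP_1}}\DIV\bef(u_h^n)\varphi_i^{\polP_1}\,\ud\bx=\sum_{K\subset S_i^{\polP_1}}\sum_{j\in\calI(K)}a_{ij}^{n,K}\sfU_j^n .
\]
Since $\sum_{j\in\calI(K)}\nabla\varphi_j^{\polP_1}=0$ on $K$ (partition of unity), we have $\sum_{j}a_{ij}^{n,K}=0$. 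We may therefore subtract $\sfU_i^n$ and write the flux term as $\sum_K\sum_{j\ne i}a_{ij}^{n,K}(\sfU_j^n-\sfU_i^n)$.

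Second, we combine this with \eqref{eq:b_h}. Regrouping the cell sums as sums over edges, $\sum_{K\subset S_i^{\polP_1}}\sum_{j\in\calI(K),j\ne i}=\sum_{j\in\calI(S_i^{\polP_1}),j\ne i}\sum_{K\subset S_{ij}^{\polP_1}}$, and using the definition \eqref{eq:gij}, the scheme becomes
\[
\sfU_i^{{\rm L},n+1}=\Big(1-\sum_{j\ne i}\gamma_{ij}^n\Big)\sfU_i^n+\sum_{j\ne i}\gamma_{ij}^n\,\sfU_j^n .
\]
The main care is needed in the sign bookkeeping. The viscous term $d(\sfU_i-\sfU_j)$ and the flux term $a(\sfU_j-\sfU_i)$ both move to the right-hand side with coefficient $\frac{\dt}{m_i^{\polP_1}}(d_{ij}^{{\rm L},n,K}-a_{ij}^{n,K})$ multiplying $\sfU_j^n-\sfU_i^n$, which matches \eqref{eq:gij} exactly.

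Third, we check that the weights are admissible. The diagonal weight satisfies $1-\sum_j\gamma_{ij}^n\ge1-{\rm cfl}\ge0$ by \eqref{eq:cfl}. The off-diagonal weights require $\gamma_{ij}^n\ge0$, i.e.\ $d_{ij}^{{\rm L},n,K}\ge a_{ij}^{n,K}$ cellwise, and this is the main obstacle. To handle it, we use that $\nabla\varphi_j^{\polP_1}$ is constant on $K$, so
\[
a_{ij}^{n,K}=\Big(\int_K\bef'(u_h^n)\varphi_i^{\polP_1}\,\ud\bx\Big)\cdot\nabla\varphi_j^{\polP_1}|_K .
\]
On $K$ the function $u_h^n$ takes values in the convex hull of $\{\sfU_l^n\}_{l\in\calI(K)}$ (with the $\polP_1$ representation). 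We then aim to bound $|a_{ij}^{n,K}|\le m_{i,K}^{\polP_1}\max_{l\in\calI(K)}|\bef'(\sfU_l^n)\cdot\nabla\varphi_j^{\polP_1}|=d_{ij}^{n,K}\le d_{ij}^{{\rm L},n,K}$, using $\bc_{ij}^K=m_{i,K}^{\polP_1}\nabla\varphi_j^{\polP_1}|_K$. Strictly, this needs $\max_{l}|\bef'(\sfU_l^n)\cdot\bn|$ to dominate $|\bef'(v)\cdot\bn|$ for $v$ in the local range. We would either argue this via the nodal (vertex) evaluation of $\bef'$ in the $\polP_1$ interpolation, where $\bef'(u_h^n)$ is replaced by its $\polP_1$ interpolant, so that the bound is immediate by convexity, or note that this is the standard maximal-wave-speed assumption. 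This gives $\gamma_{ij}^n\ge0$.

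Finally, since all weights are nonnegative and sum to one, $\sfU_i^{{\rm L},n+1}$ is a convex combination of $\{\sfU_j^n\}_{j\in\calI(S_i^{\polP_1})}$. Hence it lies between $\sfU_i^{n,\min}$ and $\sfU_i^{n,\max}$ as defined in \eqref{eq:Umax-Umin}, which is \eqref{eq:UL_max}.
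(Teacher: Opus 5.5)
Your proposal is correct and follows the paper's proof. You rewrite \eqref{eq:LO}, using $\sum_{j\in\calI(K)}a_{ij}^{n,K}=0$, as the combination $\big(1-\sum_{j\ne i}\gamma_{ij}^n\big)\sfU_i^n+\sum_{j\ne i}\gamma_{ij}^n\sfU_j^n$, obtain $\gamma_{ij}^n\ge0$ from $|a_{ij}^{n,K}|\le d_{ij}^{{\rm L},n,K}$, and use \eqref{eq:cfl} for the diagonal weight, exactly as the paper does. The paper simply asserts $|a_{ij}^{n,K}|\le d_{ij}^{{\rm L},n,K}$ ``by construction,'' so your caveats make explicit a step the paper leaves implicit rather than departing from its argument. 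Those caveats are reading $u_h^n$ in the flux term through its fine-$\polP_1$ representation, and either evaluating $\bef'$ through its nodal $\polP_1$ interpolant or assuming the nodal values dominate the local wave speed (which can fail for nonmonotone $\bef'\SCAL\bn$, e.g.\ the KPP flux).
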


\begin{proof}

  Using the definitions \eqref{eq:aij} and \eqref{eq:gij} and by noting that
  $
     a_{ii}^{n,K} = - \!\!\! \sum_{j \in \calI(K), i\not=j} a_{ij}^{n,K},
  $
the low-order scheme can be written in the following convex combination form:
  \begin{equation*}  
    \begin{aligned} 
      \sfU_i^{\mathrm L,n+1}
      =\ &
          \sfU_i^n\Big[
          1 
          -   
          \frac{\dt}{m_i^{\polP_1}}
          \sum_{K \subset S_i^{\polP_1}}
          \Big(
          \int_K
          \bef'(u_h^n) \SCAL\GRAD \varphi_i^{\polP_1}
          \varphi_i^{\polP_1} \ud\bx
          +
          \sum_{j \in \calI(K), j\not=i} d_{ij}^{{\rm L}, n,K}
          \Big)
          \Big]
      \\
        & \ -
          \frac{\dt}{m_i^{\polP_1}} 
          \sum_{K \subset S_i^{\polP_1}}
          \sum_{j \in \calI(K), j\not=i}
          \sfU_j
          \Big(
          \int_K
          \bef'(u_h^n) \SCAL \GRAD \varphi_j^{\polP_1}
          \varphi_i^{\polP_1} \ud\bx
          +
          d_{ij}^{{\rm L}, n,K}
          \Big) \\
      =\ &
          \Big(
          1 - \sum_{j \in \calI( S_i^{\polP_1}), j\not=i}\gamma^n_{ij}
          \Big)\sfU_i^n
          +
          \sum_{j \in \calI( S_i^{\polP_1}), j\not=i} \gamma^n_{ij}\, \sfU_j^n.
    \end{aligned}
  \end{equation*}

By construction, $|a_{ij}^{n,K}|\le d_{ij}^{{\rm L},n,K}$, and therefore
$\gamma_{ij}^n\ge0$ for all $i\in\calV$. In addition, the coefficients
multiplying $\sfU_i^n$ and $\sfU_j^n$ sum to one. Hence
$\sfU_i^{\mathrm L,n+1}$ satisfies the discrete maximum principle
\eqref{eq:UL_max}.
\end{proof}

\begin{remark}[CFL condition]
  We define the local and global mesh sizes by
  \[
    h_K := \big( \max_{i \in \calI(K)} \|\GRAD \varphi_i^{\polP_1}\|_{L^\infty(K)} \big)^{-1},
    \mbox{ and }
    h := \min_{K\in \calT_h} h_K.
  \]
  Similarly, we define maximum wave speed by
  $
  \beta:= \sup_{w\in[u_{\min}, u_{\max}]} \|\bef'(w)\|.
  $
  We note the following upper bounds:
  $
  \int_K
  \bef'(u_h^n) \SCAL\GRAD \varphi_i^{\polP_1}
  \varphi_i^{\polP_1} \ud\bx
  \le
  \frac{\beta}{h} m_{i,K}^{\polP_1},
  $
  and
  $
  d_{ij}^{{\rm L}, n,K}
  \le
  \frac{\beta}{h} m_{i,K}^{\polP_1}.
  $
  Therefore, we have
  \[
    \frac{1}{m_i^{\polP_1}}
    \sum_{j \in \calI( S_i^{\polP_1}), j\not=i}
    \sum_{K\in S_{ij}^{\polP_1}}
    \left(
      d_{ij}^{{\rm L},n,K}
      -
      a_{ij}^{n,K}
    \right)
    \le
    \frac{1}{m_i^{\polP_1}}
    2 \frac{\beta}{h} \, 2 m_i^{\polP_1}
    = 4 \frac{\beta}{h}.
  \]

  The time-step from the CFL condition \eqref{eq:cfl} can be estimated as
  $
  \dt \ge \frac{\rm cfl}{4} \frac{h}{\beta}.
  $
  This gives a sufficient CFL restriction of the usual form $\dt=\mathcal \calO(h/\beta)$.
  
\end{remark}

\section{High-order scheme}
\label{Sec:high}

Since the limiting procedure discussed below does not require the low-order and high-order schemes to use the same stabilization terms, one can use any high-order finite element discretization to compute the high-order nodal values $\sfU_i^{\textrm{H},n+1}$. Examples of popular stabilization techniques for finite element approximations include the entropy viscosity method \citep{Guermond_pasquetti_popov_JCP_2011, Nazarov_Larcher_2017}, the continuous interior penalty (CIP) method \citep{Douglas_Dupont_1976, Burman_Hansbo_2004}, and residual-based artificial viscosity, or residual viscosity (RV), methods \citep{Nazarov_2013, Nazarov_Hoffman_2013}. In this work, we use the RV method.

We already constructed an efficient local fine $\polP_1$ infrastructure when defining the first-order scheme in the previous section. Therefore, we construct the high-order scheme by defining a higher-order local edge-viscosity coefficient analogous to \eqref{eq:dij}, and subsequently a higher-order viscous bilinear form analogous to \eqref{eq:b_h}. Let $\sfR_i^n$ denote the nodal residual of the high-order approximation at time $t_n$. In the present implementation, the element-local residual magnitude associated with node $i\in\calI(K)$ is
\[
  \sfR_i^{n,K}
  :=
  |\sfR_i^n|
  \int_K \varphi_i^{\polP_1}\,\ud\bx
  =
  |\sfR_i^n|m_{i,K}^{\polP_1}.
\]

For every edge $(i,j)$ contained in an element $K\in\calT_h^{\polP_1}$, we define
\begin{equation}
  \label{eq:dij_K}
  d_{ij}^{{\rm R},n,K}
  =
  C_{\rm R}
  \max\left\{
  \frac{\sfR_i^{n,K}}{n(u_h^n)},
  \frac{\sfR_j^{n,K}}{n(u_h^n)}
  \right\},
\end{equation}
where $C_{\rm R}\ge 0$ is a user-defined parameter, and $n(u_h^n)>0$ is a normalization function chosen so that \eqref{eq:dij_H} has the same units as \eqref{eq:dij}. For example, one may use
$
  n(u_h^n)
  =
  \|u_h^n-\overline{u_h^n}\|_{L^\infty}
  +
  \epsilon\|u_h^n\|_{L^\infty},
$
where $\epsilon>0$ is a small constant and
$
  \overline{u_h^n}
  =
  \frac{1}{|\Omega|}
  \int_\Omega u_h^n\,\ud\bx .
$

The symmetric local residual-viscosity coefficient is then defined by
\begin{equation}
  \label{eq:dij_H}
  d_{ij}^{{\rm H},n,K}
  =
  \min\left\{
  d_{ij}^{{\rm L},n,K},
  d_{ij}^{{\rm R},n,K}
  \right\},
  \qquad i\ne j.
\end{equation}

For every $i\in\calV$, we define the element-local residual-based graph-viscosity contribution by
\[
  b_K^{{\rm H}}(u_h^n,\varphi_i^{\polP_1})
  :=
  \sum_{\substack{j\in\calI(K), j\ne i}}
  d_{ij}^{{\rm H},n,K}
  \big(\sfU_i^n-\sfU_j^n\big).
\]
This definition is locally conservative.

The global artificial viscosity operator is obtained by summing all fine-cell contributions over the fine $\polP_1$ cells contained in the support of $\varphi_i^{\polP_1}$:
\begin{equation}
  \label{eq:b_h_H}
  b_h^{{\rm H}}(u_h^n,\varphi_i^{\polP_1})
  =
  \sum_{K\subset S_i^{\polP_1}}
  b_K^{{\rm H}}(u_h^n,\varphi_i^{\polP_1})
  =
  \sum_{K\subset S_i^{\polP_1}}
  \sum_{\substack{j\in\calI(K), j\ne i}}
  d_{ij}^{{\rm H},n,K}
  \big(\sfU_i^n-\sfU_j^n\big).
\end{equation}

Since the residual may become very small in smooth regions, residual viscosity alone may add almost no stabilization there. In addition, residual-viscosity methods may perform suboptimally for even-order polynomial spaces; see, for instance, \citep[Sec.~5]{Dao_Nazarov_2022}. As shown in \citep[Sec.~4.2]{Ern_Guermond_2013}, adding an additional linear stabilization term, such as CIP stabilization, can restore optimal convergence for even-order polynomial spaces.

In this work, we therefore employ the CIP stabilization of \citep{Douglas_Dupont_1976, Burman_Hansbo_2004} in addition to the RV term \eqref{eq:b_h_H}:
\[
  s_h(u_h^n,\varphi_i)
  =
  \gamma_{\mathrm{cip}}
  \sum_{F\in\mathcal F_h^{\mathrm{int}}}
  h_F^2
  \beta_F
  \int_F
  [\![\GRAD u_h^n]\!]
  [\![\GRAD \varphi_i]\!]
  \,\ud s,
\]
where
$
  [\![ \nabla w ]\!]_F
  :=
  \nabla w^+\SCAL \bn^+
  +
  \nabla w^-\SCAL \bn^-
$
denotes the jump of the normal derivative across an interior facet $F$ of the coarse $\polP_k$ mesh. The facet wave speed is defined by
\[
  \beta_F
  :=
  \frac12
  \left(
    \big|\bef'(u_h^n)^+\SCAL \bn^+\big|
    +
    \big|\bef'(u_h^n)^-\SCAL \bn^-\big|
  \right).
\]
The facet size is computed as
$
  h_F := \frac12(h_{K^+}+h_{K^-}),
$
where $K^+\in\calT_h$ and $K^-\in\calT_h$ are the two elements sharing the interior facet $F$, and $h_K=\operatorname{diam}(K)$. The penalty parameter is set to
$
  \gamma_{\mathrm{cip}}
  =
  \frac{d^2}{10(1+p)^4}
$
in all simulations presented in this paper.

Finally, we compute the high-order nodal values $\sfU_i^{\textrm{H},n+1}$ by solving
\begin{equation}
  \label{eq:HO}
  \begin{aligned}
    \sum_{j\in\mathcal I(i)}
    m_{ij}
    \frac{\sfU_j^{\textrm{H},n+1}-\sfU_j^n}{\dt}
    +
    \int_{S_i}
    \DIV \bef(u_h^n)\varphi_i\,\ud\bx
    +
    b_h^\textrm{H}(u_h^n,\varphi_i^{\polP_1})
    +
    s_h(u_h^n,\varphi_i)
    =
    0,
  \end{aligned}
\end{equation}
for every $i\in\calV$.

\begin{proposition}
  The high-order scheme \eqref{eq:HO} is conservative with respect to the high-order mass matrix, that is,
  \[
    \sum_{i\in\calV} m_i\sfU_i^{\mathrm H,n+1}
    =
    \sum_{i\in\calV} m_i\sfU_i^{n}.
  \]
\end{proposition}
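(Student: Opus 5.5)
The plan is to mimic the proof of the conservation property of the low-order scheme \eqref{eq:LO}: multiply \eqref{eq:HO} by $\dt$, sum over all $i\in\calV$, and show that every term except the time-derivative term vanishes. The time-derivative term is handled by the symmetry of the consistent mass matrix together with the partition of unity. Since $m_{ij}=m_{ji}$ and $\sum_{i\in\calV}\varphi_i=1$, we get
\[
  \sum_{i\in\calV}\sum_{j\in\mathcal I(i)} m_{ij}\big(\sfU_j^{\mathrm H,n+1}-\sfU_j^n\big)
  =\sum_{j\in\calV}\Big(\sum_{i\in\calV} m_{ij}\Big)\big(\sfU_j^{\mathrm H,n+1}-\sfU_j^n\big)
  =\sum_{j\in\calV} m_j\big(\sfU_j^{\mathrm H,n+1}-\sfU_j^n\big),
\]
where $m_j$ is the lumped $\polP_k$ mass $m_j^{\polP_k}$. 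This is exactly the difference of the two sides of the claim.

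The remaining three terms are treated one at a time.
\begin{enumerate}
\item \emph{Flux term.} By the partition of unity, $\sum_i\int_{S_i}\DIV\bef(u_h^n)\varphi_i\,\ud\bx=\int_\Omega \DIV\bef(u_h^n)\,\ud\bx=\int_{\partial\Omega}\bef(u_h^n)\SCAL\bn\,\ud s$.
\item \emph{Residual-viscosity term.} I will show that $d_{ij}^{\mathrm H,n,K}$ is symmetric in $(i,j)$. Indeed, $d_{ij}^{\mathrm L,n,K}$ is symmetric by the $\max$ in \eqref{eq:dij}, $d_{ij}^{\mathrm R,n,K}$ is symmetric by the $\max$ in \eqref{eq:dij_K}, and hence so is their minimum in \eqref{eq:dij_H}. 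Summing \eqref{eq:b_h_H} over $i$ and regrouping by fine cells $K$ gives $\sum_K\sum_{i\ne j\in\calI(K)} d_{ij}^{\mathrm H,n,K}(\sfU_i^n-\sfU_j^n)$. Each unordered pair $\{i,j\}$ contributes $d_{ij}(\sfU_i-\sfU_j)+d_{ji}(\sfU_j-\sfU_i)=0$, so this sum vanishes. This works because the node index set of the fine $\polP_1$ mesh coincides with $\calV$.
\item \emph{CIP term.} By linearity of the jump in the test function, $\sum_i s_h(u_h^n,\varphi_i)=s_h\big(u_h^n,\sum_i\varphi_i\big)=s_h(u_h^n,1)$. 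Since $\GRAD 1=0$, all facet jumps $[\![\GRAD 1]\!]$ vanish and this term is zero.
\end{enumerate}

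The only genuine obstacle is the boundary integral $\int_{\partial\Omega}\bef(u_h^n)\SCAL\bn\,\ud s$. It does not vanish in general, and the paper only says that "appropriate boundary conditions" are imposed. I would state explicitly that the argument assumes periodic boundary conditions, or data for which the net boundary flux is zero (e.g.\ compactly supported solutions away from $\partial\Omega$). This is the same assumption implicitly used in the proof of the low-order conservation proposition. Under that assumption, dividing by $\dt$ is unnecessary: the summed identity reads $\sum_i m_i(\sfU_i^{\mathrm H,n+1}-\sfU_i^n)=0$, which is the claim. Otherwise the result holds up to the boundary flux contribution $\dt\int_{\partial\Omega}\bef(u_h^n)\SCAL\bn\,\ud s$.
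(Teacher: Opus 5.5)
Your proof is correct and follows the same route as the paper's one-line argument: you sum \eqref{eq:HO} over $i$, use the partition of unity for the mass and flux terms, and use the antisymmetric edge structure of $b_h^{\rm H}$ together with $s_h(u_h^n,1)=0$ for the two stabilization terms. Your explicit caveat about the boundary flux $\int_{\partial\Omega}\bef(u_h^n)\SCAL\bn\,\ud s$ is a legitimate point that the paper leaves implicit under ``appropriate boundary conditions''.
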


\begin{proof}
  Summing \eqref{eq:HO} over all $i\in\calV$, using the partition-of-unity property of the basis functions, and using the conservation properties of
$
b_h^\textrm{H}(u_h^n,\varphi_i^{\polP_1})
$
and
$
s_h(u_h^n,\varphi_i),
$
proves the result.
\end{proof}

\section{Graph-Poisson limiter}
\label{Sec:limiter}

Next, we want to find a relation of the high-order to the low-order scheme. Since, both solutions are carrying different masses therefore the quantity 
\[
  \sum_{i\in \calV}\big( m_i^{\polP_1} \sfU_i^{\textrm{H}, n+1} - m_i^{\polP_1} \sfU_i^{\textrm{L}, n+1} \big) \not= 0.
\] 

We want to find a $\delta^{n+1}_i$ such that the following vector ${\sfA}^{n+1}$ with the nodal values:
\[
  {\sfA}^{n+1}_i = m_i^{\polP_1} \big( \sfU_i^{\textrm{H}, n+1} - \sfU_i^{\textrm{L}, n+1} \big) - m_i^{\polP_1} \delta^{n+1},
\] 
is conservative, \ie $\sum_{i\in \calV} {\sfA}^{n+1}_i = 0$. In fact, setting 
\[
  \delta^{n+1} = \frac{\sum_{i\in \calV} m_i^{\polP_1} \big( \sfU_i^{\textrm{H}, n+1} - \sfU_i^{\textrm{L}, n+1} \big)}{ \sum_{i\in \calV} m_i^{\polP_1} },
\]
gives conservation of ${\sfA}$. 
We can write the following relationship of the high and low order solutions
\begin{equation}\label{eq:Ai}
  \sfU_i^{\textrm{H}, n+1} = \sfU_i^{\textrm{L}, n+1} +  \frac{1}{m_i^{\polP_1}} \sfA^{n+1}_i + \delta^{n+1}.
\end{equation}

\begin{lemma}[Minimum-energy conservative flux reconstruction]
  \label{lemma:F}
  Let $\Omega\subset\mathbb R^d$ be a bounded domain and let
  $A\in L^2(\Omega)$ satisfy
  $
    \int_\Omega A \,\ud\bx = 0.
  $
  Then there exists a unique flux
  $\bF\in \big[ L^2(\Omega) \big]^d$
  minimizing the functional
  $
  \frac12\|\bF\|_{L^2(\Omega)}^2
  $
  subject to the conservation constraint
  $
  \DIV\bF = A
  $
  in $\Omega$, 
  and the homogeneous boundary condition
  $
  \bF\SCAL\bn = 0
  $
  on $\partial\Omega$. 
  Moreover, the minimizer is given by
  $
  \bF = -\nabla q,
  $
  where $q$ is the solution of the Neumann problem
  \begin{align*}
    -\Delta q = A
    \text{ in }\Omega, \text{ with }
    \p_nq = 0
            \text{on }\p\Omega.
  \end{align*}
\end{lemma}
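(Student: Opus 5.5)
The plan is to interpret the constraints weakly and treat the problem as a minimum-norm problem on a closed affine subspace of $\bL := [L^2(\Omega)]^d$. The pair of conditions $\DIV\bF = A$ in $\Omega$ and $\bF\SCAL\bn=0$ on $\p\Omega$ is read as
\[
\int_\Omega \bF\SCAL\GRAD v\,\ud\bx = -\int_\Omega A\,v\,\ud\bx \qquad \forall v\in H^1(\Omega).
\]
This is the natural meaning when $\bF$ is only in $L^2$. For $\bF\in H(\mathrm{div})$ it reproduces both conditions by the Green formula. Taking $v=1$ gives $0=\int_\Omega A\,\ud\bx$, which is exactly the compatibility hypothesis. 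On a general bounded domain I will assume enough regularity (e.g.\ Lipschitz) for the Poincar\'e--Wirtinger inequality to hold. I will say so explicitly, since the statement only says ``bounded''.

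\textbf{Step 1: the admissible set is nonempty, and $-\GRAD q$ lies in it.} Let $V:=\{v\in H^1(\Omega):\int_\Omega v\,\ud\bx=0\}$.
\begin{itemize}
\item Poincar\'e--Wirtinger makes $(\GRAD\cdot,\GRAD\cdot)$ coercive on $V$.
\item $v\mapsto\int_\Omega Av\,\ud\bx$ is bounded on $V$.
\item Lax--Milgram therefore gives a unique $q\in V$ with $\int_\Omega\GRAD q\SCAL\GRAD v\,\ud\bx=\int_\Omega Av\,\ud\bx$ for all $v\in V$.
\item Because $\int_\Omega A=0$, this identity extends to all $v\in H^1(\Omega)$ (split $v$ into its mean plus an element of $V$). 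This is the weak form of the Neumann problem $-\Delta q=A$, $\p_nq=0$.
\end{itemize}
Setting $\bF^\ast:=-\GRAD q$, the admissible set $\calK$ is nonempty, and $\calK=\bF^\ast+Z$ with
\[
Z:=\Big\{\bG\in\bL:\int_\Omega\bG\SCAL\GRAD v\,\ud\bx=0\ \ \forall v\in H^1(\Omega)\Big\}.
\]
$Z$ is a closed subspace, being an intersection of kernels of continuous functionals.

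\textbf{Step 2: $\bF^\ast$ is the unique minimizer.} By definition $Z=(\GRAD H^1(\Omega))^\perp$ in $\bL$, and $\bF^\ast\in\GRAD H^1(\Omega)$, so $\bF^\ast\perp Z$. Hence for any $\bF=\bF^\ast+\bG\in\calK$ with $\bG\in Z$,
\[
\tfrac12\|\bF\|^2=\tfrac12\|\bF^\ast\|^2+\tfrac12\|\bG\|^2 .
\]
The functional is therefore minimized exactly when $\bG=0$, which proves both existence and uniqueness. Equivalently, $\bF^\ast$ is the orthogonal projection of $0$ onto the closed convex set $\calK$. One could also reach this via a Lagrange-multiplier/KKT argument, where the multiplier turns out to be $q$, but the Pythagorean identity is cleaner. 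Uniqueness of $\bF$ does not need uniqueness of $q$: $q$ is unique only up to constants, and $\GRAD q$ is unique.

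\textbf{Main obstacle.} The difficulty lies in the functional setting rather than in the computation. A generic $\bF\in L^2$ has no normal trace, so the constraint must be made precise in weak form. One must also check that this weak formulation encodes both conditions and is closed in $\bL$, and that the Neumann problem is solvable, which uses the zero-mean hypothesis and the Poincar\'e--Wirtinger inequality. Once that is settled, the orthogonality $\GRAD H^1\perp Z$ follows directly from the definition of $Z$ and finishes the proof.
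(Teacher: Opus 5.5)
Your proof is correct, and for the minimality part it takes a different route from the paper. The first half is the same in substance. The paper gets necessity of $\int_\Omega A\,\ud\bx=0$ from the divergence theorem, where you test with $v=1$. It then invokes solvability of the Neumann problem and checks that $-\GRAD q$ satisfies both constraints.

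For minimality, the paper only forms the Lagrangian $\calL(\bF,q)=\frac12\int_\Omega|\bF|^2\,\ud\bx+\int_\Omega q(\DIV\bF-A)\,\ud\bx$ and writes the stationarity condition $\int_\Omega(\bF-\GRAD q)\SCAL\bF'\,\ud\bx=0$. This is a formal first-order computation that identifies a critical point. With that sign convention the multiplier is actually minus the Neumann solution, since stationarity gives $\bF=\GRAD q$. That this critical point is the global minimizer, and that it is unique, is left implicit. It would follow from strict convexity on an affine constraint set, which is essentially your argument.

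Your Pythagorean identity for $\bF=\bF^\ast+\mathbf G$ with $\mathbf G\perp\GRAD H^1(\Omega)$ gives existence, minimality and uniqueness at once. It also avoids having to justify a Lagrange multiplier in infinite dimensions. Your weak formulation gives $\bF\SCAL\bn=0$ a precise meaning for a field that is merely in $[L^2(\Omega)]^d$, which the paper does not discuss. The Lipschitz/Poincar\'e--Wirtinger assumption you state is not an extra cost: the paper uses it tacitly when it asserts that the Neumann problem is solvable.

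A further benefit of your route is that it carries over directly to the discrete setting the paper actually uses. On the fine $\polP_1$ graph, antisymmetric edge fluxes split orthogonally into graph gradients $\sfQ_i-\sfQ_j$ and divergence-free circulations. The same identity then shows that the fluxes obtained from \eqref{eq:LQ} are the minimal-norm conservative reconstruction of $\sfA^{n+1}$, in the edge $\ell^2$ norm matching the weights of $L$. The paper only asserts this by analogy with the continuous lemma.
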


\begin{proof}
  The zero average condition is necessary since, by the divergence theorem,
  \[
    \int_\Omega A \ud \bx
    =
    \int_\Omega \DIV \bF \ud \bx
    =
    \int_{\partial\Omega} \bF \SCAL \bn \ud \bs
    =
    0 .
  \]
  Conversely, if $\int_\Omega A\,\mathrm dx=0$, the Neumann Poisson problem admits a solution $q$, unique up to an additive constant. Defining $\bF=-\nabla q$, we obtain
  $
    \DIV \bF
    =
    -\Delta q
    =
    A,
  $
  and the boundary condition gives
  $
    \bF \SCAL \bn
    =
    -\p_n q
    =
    0 .
  $
  Therefore $\bF$ is a conservative flux reconstruction of
  $A$.

  To prove that $\bF$ minimizes the functional
  $
  \frac12\|\bF\|_{L^2(\Omega)}^2
  $
  subject to the conservation constraint
  $
  \DIV\bF = A
  $, we introduce a Lagrange multiplier $q$
  \[
    \calL(\bF, q) = 
    \frac12 \int_\Omega |\bF|^2 \ud \bx + 
    \int_\Omega q(\DIV \bF - A) \ud \bx. 
  \]
  Now, taking the derivative of $\bF$, integrating by parts and using the boundary conditions give
  \[
    \frac{\p\calL(\bF, q)}{\p\bF} = 
    \int_\Omega \big(\bF - \GRAD q \big) \SCAL \bF' \ud \bx =0. 
  \]
\end{proof}

We use the discrete analogous of Lemma~\ref{lemma:F}, namely for given ${\sfA}^{n+1}_i$ with $\sum_{i \in \calV} {\sfA}^{n+1}_i = 0$, we want find a conservative edge flux $\sfF^{n+1}_{ij}$ such that 
\[
  {\sfA}^{n+1}_i = \sum_{j \in \calV} \sfF^{n+1}_{ij}, \mbox{ and } 
  \sfF^{n+1}_{ij} = - \sfF^{n+1}_{ji}.
\]
Similarly to the continuous case, this flux can be written as 
$
  \sfF^{n+1}_{ij} = \sfQ^{n+1}_i - \sfQ^{n+1}_j,
$
where $\sfQ^{n+1}_i$ is the nodal value of a vector $\sfQ^{n+1}$ that solves the Graph Poisson equation
\begin{equation}\label{eq:LQ}
  L\sfQ^{n+1} = {\sfA}^{n+1}.
\end{equation}

Consequently, we obtain the following relation between the high- and low-order solutions, which is convenient for the limiting procedure:
\begin{equation}\label{eq:Aij}
  \sfU_i^{\textrm{H}, n+1} = \sfU_i^{\textrm{L}, n+1} +  \frac{1}{m_i^{\polP_1}} \sum_{j \in \calI(S^{\polP_1}_i)} \sfF^{n+1}_{ij} + \delta^{n+1}.
\end{equation}

\subsection{Convex limiting}
\label{Sec:convex_limiting}

Once the high- and low-order solutions have been related by a conservative graph flux, one can apply algebraic flux correction, flux-corrected transport, or convex limiting techniques to obtain an invariant-domain preserving high-order method. We use the convex limiting framework of \citep{Guermond_Nazarov_Popov_Tomas_2018}.

Recall that the graph-Poisson reconstruction gives the relation \eqref{eq:Aij}, where $\sfF_{ij}^{n+1} = - \sfF_{ji}^{n+1}$. We perform two limiting steps: first, we do not include the scalar mass correction term $\delta^{n+1}$ to the nodal states. Instead the first limiting step is designed to preserve the local bounds \eqref{eq:Umax-Umin}. The resulting mass defect is corrected in a second, capacity-based redistribution step.

Let $\big(\ell_{ij}\big)_{i,j \in \calV}$ be a symmetric limiter matrix satisfying
\[
  0\le \ell_{ij}=\ell_{ji}\le 1.
\]
We define the first limited state as
\begin{equation}
  \label{eq:lijAij}
  \sfV_i^{n+1}
  =
  \sfU_i^{{\rm L},n+1}
  +
  \frac{1}{m_i^{\polP_1}}
  \sum_{j\in\calI(S_i^{\polP_1})}
  \ell_{ij}\sfF_{ij}^{n+1}.
\end{equation}
Note that if $\ell_{ij}\equiv 0$, then \eqref{eq:lijAij} recovers the low-order state,
$
  \sfV_i^{n+1}=\sfU_i^{{\rm L},n+1}.
$
If $\ell_{ij}\equiv 1$, then
$
  \sfV_i^{n+1}
  =
  \sfU_i^{{\rm H},n+1}
  -
  \delta^{n+1}.
$
Thus, the fist limiting step does not in general recover the high-order state pointwise. This is done intentionally: the first step enforces the local bounds, while conservation with respect to the high-order mass is restored by the redistribution procedure described below.

Let $\sfU_i^{n,\min}$ and $\sfU_i^{n,\max}$ denote the local bounds defined in \eqref{eq:Umax-Umin}. We construct the limiter so that
\[
  \sfU_i^{n,\min}
  \le
  \sfV_i^{n+1}
  \le
  \sfU_i^{n,\max}.
\]


Next, let us denote the number of nodes in the support of $\varphi_i^{\polP_1}$ by $\textrm{card}(S_i^{\polP_1)}$ and define:
$
\lambda_j(i)
:=
\frac{1}{\operatorname{card}(S_i^{\polP_1})-1}
$
for every
$
j\in\calI(S_i^{\polP_1})\setminus\{i\}.
$
For each edge contribution, we define
$
  \sfP_{ij}^{n+1}
  :=
  \frac{\sfF_{ij}^{n+1}}
  {\lambda_j(i)m_i^{\polP_1}}.
$
Then the limited update can be written as the convex combination
\begin{equation}\label{eq:convcom}
  \sfV_i^{n+1}
  =
  \sum_{j\in\calI(S_i^{\polP_1})\setminus\{i\}}
  \lambda_j(i)
  \big(
  \sfU_i^{{\rm L},n+1}
  +
  \ell_{ij}\sfP_{ij}^{n+1}
  \big).
\end{equation}
For each directed edge $i\to j$, set
\[
  \theta_{ij}
  =
  \begin{cases}
    \displaystyle
    \min\left\{
    1,
    \frac{\sfU_i^{{\rm L},n+1}-\sfU_i^{n,\min}}
    {|\sfP_{ij}^{n+1}|+\epsilon_i}
    \right\},
    &
      \sfP_{ij}^{n+1}<0,
    \\[4mm]
    \displaystyle
    \min\left\{
    1,
    \frac{\sfU_i^{n,\max}-\sfU_i^{{\rm L},n+1}}
    {|\sfP_{ij}^{n+1}|+\epsilon_i}
    \right\},
    &
      \sfP_{ij}^{n+1}>0,
    \\[4mm]
    1,
    &
      \sfP_{ij}^{n+1}=0.
  \end{cases}
\]
The symmetric edge limiter is then
\begin{equation}
  \label{eq:lij}
  \ell_{ij}:=\min(\theta_{ij},\theta_{ji}).
\end{equation}

\begin{theorem}[Invariant-domain property of the first limiting step]
  \label{t:ho_max_p}
  Let $\sfV_i^{n+1}$ be defined by \eqref{eq:lijAij} with the limiter \eqref{eq:lij}. Assume that the low-order method is invariant-domain preserving under the CFL condition \eqref{eq:cfl}. Then
  \[
    \sfU_i^{n,\min}
    \le
    \sfV_i^{n+1}
    \le
    \sfU_i^{n,\max},
    \qquad
    \forall i\in\calV .
  \]
\end{theorem}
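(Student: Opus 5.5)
The plan is the standard convex-limiting argument, built on the convex combination \eqref{eq:convcom}. First I check \eqref{eq:convcom}. The weights $\lambda_j(i)=1/(\operatorname{card}(S_i^{\polP_1})-1)$ are nonnegative and sum to one over $j\in\calI(S_i^{\polP_1})\setminus\{i\}$. Moreover, $\lambda_j(i)\ell_{ij}\sfP_{ij}^{n+1}=\ell_{ij}\sfF_{ij}^{n+1}/m_i^{\polP_1}$, so \eqref{eq:convcom} reproduces \eqref{eq:lijAij}; the diagonal term $j=i$ can be dropped since $\sfF_{ii}^{n+1}=\sfQ_i-\sfQ_i=0$. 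Because the interval $[\sfU_i^{n,\min},\sfU_i^{n,\max}]$ is convex, it then suffices to prove, for every $j\ne i$ in the stencil,
\[
\sfU_i^{n,\min}\le \sfU_i^{{\rm L},n+1}+\ell_{ij}\sfP_{ij}^{n+1}\le \sfU_i^{n,\max}.
\]

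Next I use the low-order invariant-domain property. Under the CFL condition \eqref{eq:cfl}, Theorem~\ref{Thm:Min_Max_first_order} gives $\sfU_i^{n,\min}\le\sfU_i^{{\rm L},n+1}\le\sfU_i^{n,\max}$. Hence both numerators in the definition of $\theta_{ij}$ are nonnegative, so $0\le\theta_{ij}\le1$. By \eqref{eq:lij}, $0\le\ell_{ij}\le\theta_{ij}$, and $\ell_{ij}=\ell_{ji}$.

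I then split into three cases on the sign of $\sfP_{ij}^{n+1}$.
\begin{itemize}
\item If $\sfP_{ij}^{n+1}=0$, the intermediate state equals $\sfU_i^{{\rm L},n+1}$, which is already within bounds.
\item If $\sfP_{ij}^{n+1}>0$, the lower bound is immediate. For the upper bound,
\[
\ell_{ij}\sfP_{ij}^{n+1}\le\theta_{ij}|\sfP_{ij}^{n+1}|\le \frac{\sfU_i^{n,\max}-\sfU_i^{{\rm L},n+1}}{|\sfP_{ij}^{n+1}|+\epsilon_i}\,|\sfP_{ij}^{n+1}|\le \sfU_i^{n,\max}-\sfU_i^{{\rm L},n+1},
\]
where the last step uses $\epsilon_i\ge0$. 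When the minimum in $\theta_{ij}$ is attained by $1$, the middle inequality still holds, because $1$ is then the smaller of the two quantities.
\item If $\sfP_{ij}^{n+1}<0$, the argument is symmetric and uses the lower-bound ratio.
\end{itemize}
Averaging these intermediate states with the weights $\lambda_j(i)$ gives the claim.

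The only point that needs care is the symmetrization. The bound for node $i$ must use $\theta_{ij}$, the quantity computed from node $i$'s own bounds and its own $\sfP_{ij}^{n+1}$, and not $\theta_{ji}$. This works because $\ell_{ij}=\min(\theta_{ij},\theta_{ji})\le\theta_{ij}$. A smaller coefficient can only move the intermediate state toward $\sfU_i^{{\rm L},n+1}$ along the segment to $\sfU_i^{{\rm L},n+1}+\theta_{ij}\sfP_{ij}^{n+1}$, and both endpoints of that segment lie in the admissible interval. For the same reason the argument holds for any $\epsilon_i\ge0$; the case $\epsilon_i=0$ is covered because $\sfP_{ij}^{n+1}\ne0$ in the nontrivial cases.
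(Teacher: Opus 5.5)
Your proposal is correct and follows the same route as the paper. The paper also establishes the per-edge bound $\sfU_i^{n,\min}\le \sfU_i^{{\rm L},n+1}+\ell_{ij}\sfP_{ij}^{n+1}\le \sfU_i^{n,\max}$ and then averages with the weights $\lambda_j(i)$ through the identity \eqref{eq:convcom}; where the paper only asserts the per-edge bound ``by construction,'' you verify it, using the low-order bounds to obtain $\theta_{ij},\theta_{ji}\ge 0$ and hence $0\le\ell_{ij}\le\theta_{ij}$, and you also check that \eqref{eq:convcom} reproduces \eqref{eq:lijAij} because $\sfF_{ii}^{n+1}=0$.
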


\begin{proof}
  Fix $i\in\calV$. By construction, for every $j\in\calI(S_i^{\polP_1})\setminus\{i\}$,
  \[
    \sfU_i^{n,\min}
    \le
    \sfU_i^{{\rm L},n+1}
    +
    \ell_{ij}\sfP_{ij}^{n+1}
    \le
    \sfU_i^{n,\max}.
  \]
  Since the coefficients $\lambda_j(i)$ are nonnegative and sum to one, the convex combination satisfies
  \[
    \sfU_i^{n,\min}
    \le
    \sum_{j\in\calN_i}
    \lambda_j(i)
    \left(
    \sfU_i^{{\rm L},n+1}
    +
    \ell_{ij}\sfP_{ij}^{n+1}
    \right)
    \le
    \sfU_i^{n,\max}.
  \]
  Using the idendity \eqref{eq:convcom} the proof is followed. 
\end{proof}

\begin{remark}[Relaxed bounds]
  \label{remark:relaxed_bounds}
  The theorem above gives a strict local maximum principle. While this property is useful for stability, enforcing strict monotonicity at every node may destroy high-order accuracy. This is consistent with Godunov's theorem, which states that linear monotone schemes for scalar conservation laws are at most first-order accurate. Although the present method is nonlinear, strict limiting may still activate near smooth extrema and reduce the observed convergence rate.

  To retain high-order accuracy in smooth regions, we use a mesh-dependent relaxation of the local bounds, following the approach of \citep{Guermond_Nazarov_Popov_Tomas_2018}. Let $d$ be the space dimension and define
  \begin{equation}\label{eq:ri}
    r_i
    :=
    \left(
      \frac{m_i^{\polP_1}}{|\Omega|}
    \right)^{\alpha/d}.
  \end{equation}
  On shape-regular meshes, $m_i^{\polP_1}\sim h_i^d$, and therefore $r_i\sim h_i^\alpha$. In the numerical experiments below we use $\alpha=3/2$. The relaxed bounds are then defined by
  \[
    \sfU_i^{n,\min,{\rm rel}}
    :=
    \sfU_i^{n,\min}
    -
    C_{\rm rel} r_i,
    \qquad
    \sfU_i^{n,\max,{\rm rel}}
    :=
    \sfU_i^{n,\max}
    +
    C_{\rm rel} r_i,
  \]
  where $C_{\rm rel}:=C_{\rm rel}(p)>0$ is independent of $h$, but can be dependent of polynomial degree $p$.

Consequently, the first limited state satisfies
  \[
    \sfU_i^{n,\min}-C_{\rm rel}r_i
    \le
    \sfV_i^{n+1}
    \le
    \sfU_i^{n,\max}+C_{\rm rel}r_i.
  \]
  Since $r_i\to0$ under mesh refinement, the strict invariant-domain bounds are recovered asymptotically.
\end{remark}

\begin{remark}[Solution of the graph-Laplacian problem]
  \label{rem:graph_laplacian_solve}
  The Graph-Poisson flux reconstruction requires the solution of one graph-Laplacian problem \eqref{eq:LQ} each time the limiter is applied. The corresponding discrete graph Laplacian is constructed once on the fine-$\polP_1$ graph. More precisely, for every fine-$\polP_1$ edge $(i,j)$, the local contribution
  \begin{equation*}
    \left[
  \begin{array}{rr}
    1 & -1 \\
    -1 & 1
  \end{array}
  \right]
\end{equation*}
is added to the global graph matrix. Thus, the resulting matrix is symmetric positive semidefinite and has the constant as its nullspace, \ie $L\mathbf 1 = 0$, since the right hand side satisfies $\sum_{i \in \calV} {\sfA}^{n+1}_i = 0$, therefore the graph-Laplacian problem is solvable up to an additive constant. In the implementation, a constant nullspace is attached to the PETSc matrix and the system is solved using a Krylov method with algebraic multigrid preconditioning.

  The graph matrix depends only on the fine-$\polP_1$ connectivity and is therefore independent of the polynomial degree $\polP_k$ of the high-order approximation. Consequently, the sparsity pattern of this auxiliary solve does not grow with $k$. Moreover, since the matrix is fixed throughout the computation, it can be assembled once and reused whenever the limiter is called. In our computations, with appropriate PETSc preconditioning, this graph-Laplacian solve was not observed to be the computationally dominant part of the limiting algorithm.
\end{remark}

\subsection{Mass redistribution}
\label{Sec:mass_distribution}
In this section, we discuss how the remaining mass defect can be redistributed over the domain so that the final limited solution has the same fine $\polP_1$ mass as the high-order update while still satisfying the prescribed bounds. Related ideas appear in classical positivity-preserving DG methods and finite element FCT limiters; see, for example, \citep[Sec.~4]{Kuzmin_2000}. Bound-preserving mass corrections based on allowable capacities have also been used more recently in other discretization frameworks. We refer to \citep[Sec.~3.3]{Overton-Katz_et_al_2023}, where such a correction is applied in a finite volume setting, and to \citep[Sec.~3]{Dzanic_Trojak_Witherden_2023}, where a related idea is used for DG methods. In these works, a global mass defect is computed after a bound-preserving correction, and the remaining mass is redistributed among cells or degrees of freedom with available admissible capacity.

For continuous finite elements, we refer to the recent work of \citep[Sec.~2]{Guermond_Wang_2025}, where a similar mass correction is performed using the lumped mass matrix of the $\polP_k$ approximation space. As discussed above, this is natural for Bernstein polynomial bases, for which the lumped masses are nonnegative, but it becomes restrictive for standard high-order Lagrange bases, for which some lumped masses may be negative.

In the present work, this restriction is avoided by performing the limiting and mass redistribution on the fine $\polP_1$ submesh induced by the high-order nodal points. The corresponding lumped masses $m_i^{\polP_1}$ are strictly positive independently of the polynomial degree of the high-order approximation.

Let us start by introducing the following definition and hypothesis for the high-order update.

\begin{definition}[Admissible mass interval]
  \label{def:adm}
  Let $\sfU_i^{n,\min}$ and $\sfU_i^{n,\max}$ denote the nodal lower and upper admissible bounds at time level $t_{n+1}$, and let $m_i^{\polP_1}>0$ be the fine-\(\polP_1\) lumped masses. We define the admissible mass interval by
  \[
    I_{\rm adm}^{n+1}
    :=
    \left[
      M_{\min}^{n+1},
      M_{\max}^{n+1}
    \right],
  \]
  where
  \[
    M_{\min}^{n+1}
    :=
    \sum_{i\in\calV}
    m_i^{\polP_1}\sfU_i^{n,\min},
    \qquad
    M_{\max}^{n+1}
    :=
    \sum_{i\in\calV}
    m_i^{\polP_1}\sfU_i^{n,\max}.
  \]
  Equivalently, $I_{\rm adm}^{n+1}$ is the interval of admissible fine-\(\polP_1\) lumped masses associated with the nodal bounds
  \[
    \sfU_i^{n,\min}
    \le
    \sfW_i
    \le
    \sfU_i^{n,\max},
    \qquad i\in\calV.
  \]
  In particular, every nodally admissible state $\sfW$ satisfies
  \[
    M_1(\sfW)
    :=
    \sum_{i\in\calV}m_i^{\polP_1}\sfW_i
    \in
    I_{\rm adm}^{n+1}.
  \]
\end{definition}

\vspace{0.1in}
\begin{remark}[Degenerate admissible mass interval]
  The admissible mass interval
  has zero width if and only if
  $
    M_{\min}^{n+1}=M_{\max}^{n+1}.
  $
  Since
  $
    M_{\max}^{n+1}-M_{\min}^{n+1}
    =
    \sum_{i\in\calV}
    m_i^{\polP_1}
    \left(
      \sfU_i^{n,\max}-\sfU_i^{n,\min}
    \right),
  $
  and since $m_i^{\polP_1}>0$ and
  $\sfU_i^{n,\max}\ge \sfU_i^{n,\min}$, this is equivalent to
  $
  \sfU_i^{n,\min}=\sfU_i^{n,\max},
  $
  $
  i\in\calV.
  $
  For scalar transport problems with strict local patch bounds
  $
  \sfU_i^{n,\min}
  =
  \min_{j\in\calI(S_i^{\polP_1})}\sfU_j^n,
  $
  $
  \sfU_i^{n,\max}
  =
  \max_{j\in\calI(S_i^{\polP_1})}\sfU_j^n,
$
exact degeneracy can occur only if the nodal solution is constant on each connected component of the graph. In particular, on a connected mesh, a degenerate admissible mass interval corresponds to a constant nodal state. In that case, a conservative and constant-preserving high-order predictor has the same fine-$\polP_1$ mass as the old state, and the compatibility condition is automatically satisfied.

  This exact degeneracy should be distinguished from the asymptotic behavior of the interval width as $h\to0$. Even for nonconstant smooth solutions, the strict patch-bound interval may shrink under mesh refinement, since local patch oscillations are typically of order $h$.
\end{remark}

\vspace{0.1in}
\begin{hypothesis}[Mass compatibility of the high-order target]
  \label{hyp:high_order_mass_compatibility}
  Let
  \[
    M_1(\sfW)
    :=
    \sum_{i\in\calV}m_i^{\polP_1}\sfW_i,
    \qquad
    M_k(\sfW)
    :=
    \sum_{i\in\calV}m_i\sfW_i
  \]
  denote the fine-$\polP_1$ lumped mass and the $\polP_k$ mass, respectively. We assume that the high-order predictor $\sfU^{{\rm H}, n+1}$ satisfies the following properties.
  \begin{enumerate}
  \item
    Conservative with respect to the \(\polP_k\) mass:
    \begin{equation}
      \label{eq:Pk_conservation_H}
      M_k(\sfU^{{\rm H},n+1})
      =
      M_k(\sfU^n).
    \end{equation}

  \item
    The discrepancy between the two mass measurements of the relevant states, 
    $
    E_h(\sfW)
    :=
    (M_1-M_k)(\sfW),
    $
    is controlled, \ie
    \begin{equation}
      \label{eq:mass_measure_defect}
      |E_h(\sfU^{{\rm H},n+1})|
      +
      |E_h(\sfU^n)|
      \le
      \eta_h^{n+1},
    \end{equation}
    where $\eta_h^{n+1} \rightarrow 0$ as $h\rightarrow0$. 

  \item
    The admissible mass interval has enough margin to absorb this defect:
    \begin{equation}
      \label{eq:mass_margin_condition}
      \eta_h^{n+1}
      \le
      \min\left\{
        M_1(\sfU^n)-M_{\min}^{n+1},
        M_{\max}^{n+1}-M_1(\sfU^n)
      \right\}.
    \end{equation}
  \end{enumerate}

\end{hypothesis}

\vspace{0.1in}

Under Hypothesis~\ref{hyp:high_order_mass_compatibility} the fine-$\polP_1$ mass of the high-order target belongs to the admissible mass interval:
\begin{equation}
  \label{eq:H_mass_compatibility}
  M_{\min}^{n+1}
  \le
  M_1(\sfU^{{\rm H},n+1})
  \le
  M_{\max}^{n+1}.
\end{equation}

The residual-viscosity method presented above is conservative with respect to the $\polP_k$ mass; hence it satisfies \eqref{eq:Pk_conservation_H}. Proposition~\ref{prop:mass_compatibility} provides sufficient conditions under which this $\polP_k$-mass conservation implies compatibility with the fine-$\polP_1$ admissible mass interval. In particular, if the high-order predictor is convergent and the mass-measure defect between $M_1$ and $M_k$ tends to zero under mesh refinement, then the quantity $\eta_h^{n+1}$ in \eqref{eq:mass_measure_defect} is mesh-dependent and satisfies $\eta_h^{n+1}\to0$ as $h\to0$. Consequently, if the admissible mass interval has a nonzero margin, then \eqref{eq:mass_margin_condition} is satisfied for sufficiently small $h$.

\begin{proposition}[Asymptotic compatibility of the high-order mass]
  \label{prop:mass_compatibility}
  Assume that the high-order predictor is conservative with respect to the $\polP_k$ mass, and convergent to entropy solution. Assume that the admissible mass interval from Definition~\ref{def:adm} has nonzero width. Then, for sufficiently small $h$ the high-order target mass is compatible with the admissible interval \eqref{eq:H_mass_compatibility}.
\end{proposition}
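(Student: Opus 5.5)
The plan is to reduce the statement to the three items of Hypothesis~\ref{hyp:high_order_mass_compatibility}. Once these hold, the inclusion \eqref{eq:H_mass_compatibility} follows from a short algebraic identity. The $\polP_k$ conservation \eqref{eq:Pk_conservation_H} is exactly the conservation proposition of Section~\ref{Sec:high}, so we can take $M_k(\sfU^{{\rm H},n+1})=M_k(\sfU^n)$. We then write
\[
  M_1(\sfU^{{\rm H},n+1})-M_1(\sfU^n)
  =
  E_h(\sfU^{{\rm H},n+1})-E_h(\sfU^n)
  +
  \big(M_k(\sfU^{{\rm H},n+1})-M_k(\sfU^n)\big)
  =
  E_h(\sfU^{{\rm H},n+1})-E_h(\sfU^n).
\]
It follows that $|M_1(\sfU^{{\rm H},n+1})-M_1(\sfU^n)|\le \eta_h^{n+1}$. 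Under the margin condition \eqref{eq:mass_margin_condition}, this immediately gives $M_{\min}^{n+1}\le M_1(\sfU^{{\rm H},n+1})\le M_{\max}^{n+1}$. So the real work is to show that \eqref{eq:mass_measure_defect} holds with $\eta_h^{n+1}\to0$, and that \eqref{eq:mass_margin_condition} holds for $h$ small.

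\textbf{Step 1: the mass-measure defect vanishes.} For a nodal vector $\sfW$ let $w_h=\sum_i\sfW_i\varphi_i\in\calX_h$ and $w_h^{1}=\sum_i \sfW_i\varphi_i^{\polP_1}$, the fine $\polP_1$ interpolant of $w_h$ at the same nodes. Since $\sum_i m_i\sfW_i=\int_\Omega w_h\,\ud\bx$ and $\sum_i m_i^{\polP_1}\sfW_i=\int_\Omega w_h^1\,\ud\bx$, we get
\[
  E_h(\sfW)=\int_\Omega (w_h^1-w_h)\,\ud\bx .
\]
On each coarse cell, $w_h^1$ is the piecewise-linear interpolant of $w_h$ on the fine submesh. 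For smooth limits the standard interpolation estimate gives $|E_h|\le Ch^2|\Omega|\,\|D^2 w\|_{L^\infty}$. In general we use only convergence: if $u_h^n$ and $u_h^{{\rm H},n+1}$ converge in $L^1$ to the entropy solution, then both $w_h$ and $w_h^1$ converge in $L^1$ to the same limit. This uses the $L^\infty$ stability of $\polP_1$ interpolation together with uniform boundedness of the nodal values, with \eqref{eq:uniform:mi} controlling the weights. Hence $E_h\to0$ for both states, and we define $\eta_h^{n+1}:=|E_h(\sfU^{{\rm H},n+1})|+|E_h(\sfU^n)|\to0$.

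\textbf{Step 2: the margin.} We must show that $M_1(\sfU^n)-M_{\min}^{n+1}$ and $M_{\max}^{n+1}-M_1(\sfU^n)$ dominate $\eta_h^{n+1}$. Since $\sfU_i^{n,\min}\le\sfU_i^n\le\sfU_i^{n,\max}$, both quantities are nonnegative, and they sum to the positive width $M_{\max}^{n+1}-M_{\min}^{n+1}$. The hypothesis of nonzero width has to be read as a positive lower bound on each one-sided margin that is uniform in $h$, or at least one that decays more slowly than $\eta_h^{n+1}$. With that reading, \eqref{eq:mass_margin_condition} holds for all sufficiently small $h$, and the identity above concludes the proof.

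\textbf{Main obstacle.} I expect Step~2 to be the delicate point. As the degeneracy remark notes, the strict patch interval shrinks like $\calO(h)$ per node, so the margins are $\calO(h)$ in general. One therefore needs either the relaxed bounds of Remark~\ref{remark:relaxed_bounds} (margin $\gtrsim C_{\rm rel}\sum_i m_i^{\polP_1}r_i\sim h^{\alpha}$), or a rate $\eta_h^{n+1}=o(\text{margin})$ from Step~1. I would first try to make this comparison explicit, for example $\eta_h=\calO(h^2)$ for smooth data against a margin of order $h$ or $h^{3/2}$. Failing that, I would state the nonzero-width assumption as a uniform margin and proceed.
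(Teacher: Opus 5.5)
Your skeleton is the paper's. The $\polP_k$ conservation turns $M_1(\sfU^{{\rm H},n+1})-M_1(\sfU^n)$ into $E_h(\sfU^{{\rm H},n+1})-E_h(\sfU^n)$, the defect is shown to vanish, and the result is compared with the one-sided margins $d_h^{n+1}$.

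The difference lies in how the defect is estimated. The paper splits off the exact nodal values. It bounds $E_h(\sfU^{n+1}_{\rm ex})$ by quadrature consistency, and $E_h(\sfU^{{\rm H},n+1}-\sfU^{n+1}_{\rm ex})$ by $\sum_i|m_i^{\polP_1}-m_i|\le C$ times a nodal $\ell^\infty$ error $Ch^p$. This yields the explicit rate $h^{\min\{r,p\}}$ but presupposes a smooth solution. Your identity $E_h(\sfW)=\int_\Omega(w_h^1-w_h)\,\ud\bx$ needs no comparability assumption, and through $L^1$ convergence it also covers discontinuous entropy solutions. It gives no rate, however. Also, the $\calO(h^2)$ interpolation bound applies to the exact solution, not to $w_h$, so for a rate you still need the paper's split.

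In your $L^1$ step, stability and boundedness of the nodal values only give a bound on $w_h^1-w_h$, not its vanishing. What you need is that $w_h\mapsto w_h-w_h^1$ annihilates constants on each coarse cell $K$. Hence $\|w_h-w_h^1\|_{L^1(K)}\le C\inf_{c\in\polR}\|w_h-c\|_{L^1(K)}$. The sum over $K$ is at most $\|w_h-u\|_{L^1}$ plus the piecewise-constant approximation error of $u$, so it tends to zero.

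Your Step~2 concern is justified, and the paper does not resolve it either. Its last display uses $Ch^{\min\{p,r\}}\le d_h^{n+1}$ after asserting only $d_h^{n+1}>0$. Since strict margins are typically $\calO(h)$, as the paper's own remark on degenerate intervals notes, an $h$-uniform margin is not a realistic reading of the hypothesis. The rate comparison you list first is the meaningful version, and it does close the smooth case. For a nonconstant smooth solution on quasi-uniform meshes, $d_h^{n+1}\ge ch$, while $\eta_h^{n+1}=o(h)$ once $\min\{r,p\}>1$. Your rate-free $L^1$ bound alone cannot conclude the argument.
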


\begin{proof}
  First part of the proof is estimating the mass-measure defect
  $
  E_h(\sfU^{{\rm H}, n+1}).
  $
  Let $u(t_n)$ and $u(t_{n+1})$ denote the exact solution at time levels $t_n$ and $t_{n+1}$, and let $\sfU_{\rm ex}^{n}$ and $\sfU_{\rm ex}^{n+1}$ be their nodal values.
  Then
  \[
    E_h(\sfU^{{\rm H},n+1})
    =
    E_h(\sfU_{\rm ex}^{n+1})
    +
    E_h(\sfU^{{\rm H},n+1}-\sfU_{\rm ex}^{n+1}).
  \]
  We estimate the two terms separately.

  For the first term, since both mass functionals are consistent approximations of the exact integral, we have, for smooth $u$,
  \[
    |E_h(\sfU_{\rm ex}^{n+1})|
    =
    |M_1(\sfU_{\rm ex}^{n+1})-M_k(\sfU_{\rm ex}^{n+1})|
    \le
    C h^r ,
  \]
  where $r>0$ depends on the accuracy of the quadrature rules defining the fine-$\polP_1$ lumped mass and the $\polP_k$ mass.

  The uniformly comparability condition \eqref{eq:uniform:mi} gives
  $
    \sum_{i\in\calV}
    \left|
      m_i^{\polP_1}-m_i^{\polP_k}
    \right|
    \le C,
  $
  for some $C>0$ independent of $h$, thus
  \[
    |E_h(\sfU^{{\rm H},n+1}-\sfU_{\rm ex}^{n+1})|
    \le
    C \|(\sfU^{{\rm H},n+1}-\sfU_{\rm ex}^{n+1})\|_{\ell^\infty}
    \le C h^p,
  \]
  since, the high-order solution is convergent for some $p>0$. Therefore,
  \[
    |E_h(\sfU^{{\rm H},n+1})|
    \le
    C h^r + C h^p.
  \]

  Now, we use the fact that the approximation is conservative with respect to the $\polP_k$ mass:
  $
    M_k(\sfU^{{\rm H},n+1})
    =
    M_k(\sfU^n).
  $
  Then,
  \[
    \begin{aligned}
      M_1(\sfU^{{\rm H},n+1})-M_1(\sfU^n)
      &=
        E_h(\sfU^{{\rm H},n+1})
        -
        E_h(\sfU^n).
    \end{aligned}
  \]

  We obtained that, if both $\sfU^n$ and $\sfU^{{\rm H},n+1}$ are convergent approximations of smooth exact states, then
  \begin{equation}\label{eq:m1est}
    |M_1(\sfU^{{\rm H},n+1})-M_1(\sfU^n)|
    \le
    C h^{\min(r,p)}.
  \end{equation}

  Let us denote the width of the admissible mass interval from Definition~\ref{def:adm} by
  \begin{equation}
    \label{eq:mass_interval_margin}
    d_h^{n+1}
    :=
    \min\left\{
      M_1(\sfU^n)-M_{\min}^{n+1},
      M_{\max}^{n+1}-M_1(\sfU^n)
    \right\}
    >0.
  \end{equation}
  Then, the estimate \eqref{eq:m1est} gives us 
  \[
    M_{\min}^{n+1}
    \le
    M_1(\sfU^n)-C h^{\min\{p,r\}}
    \le
    M_1(\sfU^{{\rm H},n+1})
    \le
    M_1(\sfU^n)+C h^{\min\{p,r\}}
    \le
    M_{\max}^{n+1}.
  \]
  This proves \eqref{eq:H_mass_compatibility}.
\end{proof}

The state $\sfV^{n+1}$ computed by \eqref{eq:lijAij} is invariant-domain preserving. However, limiting the conservative graph fluxes may change the fine-$\polP_1$ lumped mass relative to the high-order target. We therefore apply a second, purely nodal, mass-redistribution step. We begin with the following definition.
\begin{definition}[Sufficient redistribution capacity]
  \label{def:sufficient_capacity}
  Let
  \begin{equation}\label{eq:mass_defect}
    \Delta M^{n+1}
    :=
    \sum_{i\in\calV}
    m_i^{\polP_1}
    \big(
    \sfU_i^{{\rm H},n+1}
    -
    \sfV_i^{n+1}
    \big).
  \end{equation}
  be the mass defect after the convex limiting step, and let
  \[
    R_i^+
    :=
    m_i^{\polP_1}
    \big(
    \sfU_i^{n,\max}-\sfV_i^{n+1}
    \big),
    \qquad
    R_i^-
    :=
    m_i^{\polP_1}
    \big(
    \sfV_i^{n+1}-\sfU_i^{n,\min}
    \big).
  \]
  denote the positive and negative nodal capacities. We say that the available redistribution capacity is sufficient if
  \[
    \Delta M^{n+1}
    \le
    \sum_{i\in\calV}R_i^+
    \text{ when } \Delta M^{n+1}>0,
  \]
  and
  \[
    -\Delta M^{n+1}
    \le
    \sum_{i\in\calV}R_i^-
    \text{ when } \Delta M^{n+1}<0.
  \]
  If \(\Delta M^{n+1}=0\), the capacity condition is satisfied trivially.
\end{definition}

The capacities $R_i^\pm\ge0$ measure how much mass node $i$ can still safely receive or lose without violating the bounds. We start by defining parameters corresponding to negative and positive mass defects. We set
\begin{equation}\label{eq:alpha_p}
  \begin{aligned}
    \alpha^+
    :=
    \min\left\{
    1,
    \frac{\Delta M^{n+1}}
    {\sum_{i\in\calV}R_i^+}
    \right\},
    & \quad \mbox{ when } \Delta M^{n+1}>0,
    \\
    \alpha^-
    :=
    \min\left\{
    1,
    \frac{-\Delta M^{n+1}}
    {\sum_{i\in\calV}R_i^-}
    \right\},
    & \quad \mbox{ when } \Delta M^{n+1}<0.
  \end{aligned}
\end{equation}
Then, update solution as
\begin{equation}\label{eq:mass:limited}
  \sfU^{n+1}_i =
  \begin{cases}
    \sfV_i^{n+1}
    +
    \frac{\alpha^+R_i^+}{m_i^{\polP_1}}, \quad  &\Delta M^{n+1}>0,\\
    \sfV_i^{n+1}
    -
    \frac{\alpha^-R_i^-}{m_i^{\polP_1}}, \quad  &\Delta M^{n+1}<0,\\
    \sfV_i^{n+1}, \quad  &\Delta M^{n+1} = 0.
  \end{cases}
\end{equation}

Next, we prove that $\sfU^{n+1}$ computed by \eqref{eq:mass:limited} preserves the invariant-domain property and is conservative with respect to the fine-$\polP_1$ mass.

\begin{proposition}[Invariant-domain preserving mass redistribution]
  Assume that $\sfV_i^{n+1}$ is invariant domain preserving, \ie $\sfV_i^{n+1}\in[\sfU_i^{n,\min}, \sfU_i^{n,\max}]$ for all $i\in\calV$. Then the redistributed state $\sfU_i^{n+1}$ also satisfies
  \[
    \sfU_i^{n,\min}
    \le
    \sfU_i^{n+1}
    \le
    \sfU_i^{n,\max},
    \qquad
    \forall i\in\calV.
  \]
\end{proposition}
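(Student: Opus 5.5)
The plan is a direct case analysis on the sign of $\Delta M^{n+1}$ in \eqref{eq:mass:limited}, using only that $\alpha^\pm\in[0,1]$ and that the capacities $R_i^\pm$ are nonnegative. First I record the basic facts. Since $\sfV_i^{n+1}\in[\sfU_i^{n,\min},\sfU_i^{n,\max}]$ and $m_i^{\polP_1}>0$, we have $R_i^+\ge0$ and $R_i^-\ge0$ for all $i$. By \eqref{eq:alpha_p}, $\alpha^+$ is the minimum of $1$ and a ratio of a positive number to a nonnegative sum, so $0\le\alpha^+\le1$, and likewise $0\le\alpha^-\le1$. The degenerate situation $\sum_i R_i^+=0$ with $\Delta M^{n+1}>0$ needs a convention. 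I will read the ratio as $+\infty$, so that $\alpha^+=1$; then every $R_i^+$ vanishes and the correction is zero. The same convention applies to $\alpha^-$. This degenerate case is the only point requiring care, and I would settle it by this convention before doing any estimates.

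For the case $\Delta M^{n+1}>0$, the update is
\[
\sfU_i^{n+1}=\sfV_i^{n+1}+\alpha^+\big(\sfU_i^{n,\max}-\sfV_i^{n+1}\big)=(1-\alpha^+)\sfV_i^{n+1}+\alpha^+\sfU_i^{n,\max}.
\]
This is a convex combination of two numbers in $[\sfU_i^{n,\min},\sfU_i^{n,\max}]$, so it lies in that interval. Equivalently, the lower bound follows because the added term is nonnegative, and the upper bound follows because $\alpha^+\le1$.

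The case $\Delta M^{n+1}<0$ is symmetric: $\sfU_i^{n+1}=(1-\alpha^-)\sfV_i^{n+1}+\alpha^-\sfU_i^{n,\min}$, which again lies in the interval. In the case $\Delta M^{n+1}=0$, $\sfU_i^{n+1}=\sfV_i^{n+1}$, and the assumption gives the bounds directly.

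I expect no real obstacle, since the argument never uses the sufficient-capacity condition of Definition~\ref{def:sufficient_capacity}. That condition matters only for exact conservation, because it guarantees $\alpha^\pm$ equals the unclipped ratio so that the added mass $\alpha^+\sum_iR_i^+$ equals $\Delta M^{n+1}$. It is not needed for the bounds. I would note this in a brief remark after the proof.
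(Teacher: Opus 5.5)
Your proposal is correct and follows essentially the same route as the paper: in each case you write the update as the convex combination $(1-\alpha^\pm)\sfV_i^{n+1}+\alpha^\pm\sfU_i^{n,\max/\min}$ with $\alpha^\pm\in[0,1]$. You also handle the $\Delta M^{n+1}=0$ case and the degenerate case of zero total capacity explicitly, which the paper leaves implicit. Your remark that sufficient capacity is needed only for conservation, not for the bounds, is accurate.
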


\begin{proof}
  Suppose $\Delta M^{n+1}>0$. Then
  \[
    \sfU_i^{n+1}
    =
    \sfV_i^{n+1}
    +
    \alpha^+
    \big(
    \sfU_i^{n,\max}-\sfV_i^{n+1}
    \big)
    =
    (1-\alpha^+)\sfV_i^{n+1}
    +
    \alpha^+\sfU_i^{n,\max}.
  \]
  Since $0\le\alpha^+\le1$, the result is a convex combination of two values in the admissible interval. The case $\Delta M^{n+1}<0$ is analogous because
  \[
    \sfU_i^{n+1}
    =
    (1-\alpha^-)\sfV_i^{n+1}
    +
    \alpha^-\sfU_i^{n,\min}.
  \]
\end{proof}

\begin{proposition}[Mass conservation of the redistributed state]
  \label{prop:mass_conservation_redistribution}
  Let $\sfV^{n+1}$ denote the state obtained after the convex limiting step. Assume $\sfV^{n+1}$ satisfies invariant domain property, namely
  \[
    \sfU_i^{n,\min}
    \le
    \sfV_i^{n+1}
    \le
    \sfU_i^{n,\max},
    \qquad i\in\calV.
  \]
  Assume the high-order target mass belongs to the admissible mass interval \eqref{eq:H_mass_compatibility}. Then, the available capacities are sufficient in the sense of Definition~\ref{def:sufficient_capacity}. Consequently, the redistributed state satisfies
  \[
    \sum_{i\in\calV}
    m_i^{\polP_1}\sfU_i^{n+1}
    =
    \sum_{i\in\calV}
    m_i^{\polP_1}\sfU_i^{{\rm H},n+1}.
  \] 

\end{proposition}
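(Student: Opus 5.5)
The plan is to prove sufficiency of capacity first, then conservation, by splitting into the three cases $\Delta M^{n+1}>0$, $\Delta M^{n+1}<0$, $\Delta M^{n+1}=0$ used in \eqref{eq:mass:limited}. The key observation is that the capacities sum to admissible-interval gaps:
\[
  \sum_{i\in\calV}R_i^+ = M_{\max}^{n+1}-M_1(\sfV^{n+1}),
  \qquad
  \sum_{i\in\calV}R_i^- = M_1(\sfV^{n+1})-M_{\min}^{n+1},
\]
and the defect is $\Delta M^{n+1}=M_1(\sfU^{{\rm H},n+1})-M_1(\sfV^{n+1})$ by \eqref{eq:mass_defect}.

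For sufficiency, take first $\Delta M^{n+1}>0$. The compatibility assumption \eqref{eq:H_mass_compatibility} gives $M_1(\sfU^{{\rm H},n+1})\le M_{\max}^{n+1}$. Subtracting $M_1(\sfV^{n+1})$ yields $\Delta M^{n+1}\le\sum_iR_i^+$. For $\Delta M^{n+1}<0$, the lower bound $M_{\min}^{n+1}\le M_1(\sfU^{{\rm H},n+1})$ gives $-\Delta M^{n+1}\le\sum_iR_i^-$ in the same way. The case $\Delta M^{n+1}=0$ is trivial. Invariance of $\sfV^{n+1}$ guarantees $R_i^\pm\ge0$, so these sums are nonnegative and the inequalities are meaningful.

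For conservation in the case $\Delta M^{n+1}>0$, sufficiency forces $\sum_iR_i^+\ge\Delta M^{n+1}>0$. Hence the denominator in \eqref{eq:alpha_p} is positive, and $\alpha^+=\Delta M^{n+1}/\sum_iR_i^+\le1$, so the $\min$ with $1$ is inactive. Multiplying \eqref{eq:mass:limited} by $m_i^{\polP_1}$ and summing gives
\[
  M_1(\sfU^{n+1})=M_1(\sfV^{n+1})+\alpha^+\sum_{i\in\calV}R_i^+=M_1(\sfV^{n+1})+\Delta M^{n+1}=M_1(\sfU^{{\rm H},n+1}).
\]
The negative case is symmetric with $\alpha^-$ and $R_i^-$. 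In the zero case $\sfU^{n+1}=\sfV^{n+1}$, and the identity holds because $\Delta M^{n+1}=0$.

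The main care point is the possible vanishing of $\sum_iR_i^\pm$, which would make $\alpha^\pm$ undefined. This is excluded precisely because the sign of $\Delta M^{n+1}$ is strict in each case. We also need $\alpha^\pm\le1$ so that the $\min$ does not truncate the correction; this follows directly from sufficiency.
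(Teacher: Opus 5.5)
Your proof is correct and follows the paper's argument essentially verbatim: sufficiency comes from rewriting $\sum_{i\in\calV}R_i^\pm$ as the gaps $M_{\max}^{n+1}-M_1(\sfV^{n+1})$ and $M_1(\sfV^{n+1})-M_{\min}^{n+1}$, and conservation comes from summing the redistributed update and using $\alpha^\pm\sum_{i\in\calV}R_i^\pm=\pm\Delta M^{n+1}$. You also check explicitly that the strict sign of $\Delta M^{n+1}$ makes the denominator in \eqref{eq:alpha_p} positive and the minimum with $1$ inactive, a detail the paper leaves implicit when it appeals to ``the definition of $\alpha^+$''.
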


\begin{proof}
  We first prove that under the assumption of the proposition, the available capacities are sufficient for the mass distribution.

  In fact, when $\Delta M^{n+1}>0$, using \eqref{eq:mass_defect} and \eqref{eq:H_mass_compatibility}, we get
  \begin{align*}
    \Delta M^{n+1}
    =
    M_1(\sfU^{{\rm H}, n+1}) - M_1(\sfV^{n+1})
    \le
    M_{\max} - M_1(\sfV^{n+1})
    =
    \sum_{i\in\calV}R^{+}_i.
  \end{align*}
  Similarly, when $\Delta M^{n+1}<0$, we get
  \begin{align*}
    -\Delta M^{n+1}
    =
    -M_1(\sfU^{{\rm H}, n+1}) + M_1(\sfV^{n+1})
    \le
    -M_{\min} + M_1(\sfV^{n+1})
    =
    \sum_{i\in\calV}R^{-}_i.
  \end{align*}
  Thus, the admissible mass compatibility condition implies sufficient capacity.

  We now prove mass conservation of the redistributed state. Consider first the case $\Delta M^{n+1}>0$. Then, the limited update \eqref{eq:mass:limited} is
  \[
    \sfU^{n+1}_i = \sfV^{n+1}_i + \frac{1}{m_i^{\polP_1}} \alpha^+ R^+_i. 
  \]
  Multipliying both side to $m_i^{\polP_1}$ and summing over $i\in\calV$ gives
  \[
    M_1(\sfU^{n+1}) = M_1(\sfV^{n+1}) + \alpha^+ \sum_{i\in\calV}R^+_i.
  \]
  Now, using the definition of $\alpha^+$ from \eqref{eq:alpha_p}, we obtain
  \begin{align*}
    M_1(\sfU^{n+1})
    =&\
       M_1(\sfV^{n+1})
       + \Delta M^{n+1} \\
    = &\
        M_1(\sfV^{n+1})
        +
        \Big( M_1(\sfU^{{\rm H}, n+1}) - M_1(\sfV^{n+1}) \Big) \\
    =&\ 
       M_1(\sfU^{{\rm H}, n+1}).
  \end{align*}

  The case $\Delta M^{n+1}<0$ is analogous.  Finally, if $\Delta M^{n+1}=0$, then no redistribution is needed and the identity is immediate.
\end{proof}

\begin{proposition}[Compatibility under relaxed bounds]
  \label{prop:relaxed_mass_compatibility}
  Assume that the high-order predictor is conservative with respect to the $\polP_k$ mass and satisfies
  \[
    \eta_h^{n+1}
    :=
    \left|
      M_1(\sfU^{{\rm H},n+1})-M_1(\sfU^n)
    \right|
    =
    o(\rho_h),
    \qquad h\to0,
  \]
  where
  $
    \rho_h
    :=
    \sum_{i\in\calV}
    m_i^{\polP_1} r_i ,
  $ 
  where $r_i$ is given by \eqref{eq:ri}. 
  Let $I_{\rm adm,rel}^{n+1}$ denote the admissible mass interval generated by the relaxed bounds. Then, for sufficiently small $h$,
  \[
    M_1(\sfU^{{\rm H},n+1})
    \in
    I_{\rm adm,rel}^{n+1}.
  \]
  Consequently, the available redistribution capacity associated with the relaxed bounds is sufficient, and the redistributed state can be made conservative with respect to the fine-$\polP_1$ mass of the high-order predictor.
\end{proposition}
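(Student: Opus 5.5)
The strategy is to show that the relaxed admissible mass interval contains a neighbourhood of $M_1(\sfU^n)$ of radius at least $C_{\rm rel}\rho_h$. Since $M_1(\sfU^{{\rm H},n+1})$ lies within $\eta_h^{n+1}=o(\rho_h)$ of $M_1(\sfU^n)$, it then lies in that neighbourhood for small $h$. The capacity and conservation statements will follow verbatim from Proposition~\ref{prop:mass_conservation_redistribution}, with strict bounds replaced by relaxed ones.

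\textbf{Step 1 (structure of the relaxed interval).} By Definition~\ref{def:adm} applied to the relaxed bounds of Remark~\ref{remark:relaxed_bounds},
\[
  M_{\min,{\rm rel}}^{n+1} = M_{\min}^{n+1} - C_{\rm rel}\rho_h,
  \qquad
  M_{\max,{\rm rel}}^{n+1} = M_{\max}^{n+1} + C_{\rm rel}\rho_h .
\]
This is immediate from linearity, since $\sum_i m_i^{\polP_1}(\sfU_i^{n,\min}-C_{\rm rel}r_i)=M_{\min}^{n+1}-C_{\rm rel}\rho_h$, and similarly for the maximum.

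\textbf{Step 2 (the old state is strictly inside).} Each node $i$ belongs to its own patch $\calI(S_i^{\polP_1})$. Hence $\sfU_i^{n,\min}\le \sfU_i^n\le \sfU_i^{n,\max}$, and summing with the positive weights $m_i^{\polP_1}$ gives $M_{\min}^{n+1}\le M_1(\sfU^n)\le M_{\max}^{n+1}$. Combined with Step 1, this yields
\[
  \min\{M_1(\sfU^n)-M_{\min,{\rm rel}}^{n+1},\,M_{\max,{\rm rel}}^{n+1}-M_1(\sfU^n)\}\ge C_{\rm rel}\rho_h .
\]
This step removes the need for the nonzero-width assumption of Proposition~\ref{prop:mass_compatibility}: the relaxation itself supplies the margin. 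This matters because the strict width may shrink like $h$ and possibly faster than $\eta_h^{n+1}$.

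\textbf{Step 3 (absorbing the defect).} Since $\eta_h^{n+1}=o(\rho_h)$, there is $h_0$ such that $\eta_h^{n+1}\le C_{\rm rel}\rho_h$ for all $h\le h_0$. Here we use $\rho_h>0$, which holds because $m_i^{\polP_1}>0$ and $r_i>0$. Then
\[
  M_{\min,{\rm rel}}^{n+1}\le M_1(\sfU^n)-\eta_h^{n+1}\le M_1(\sfU^{{\rm H},n+1})\le M_1(\sfU^n)+\eta_h^{n+1}\le M_{\max,{\rm rel}}^{n+1},
\]
which is the claimed membership. The only delicate point is the meaning of $o(\rho_h)$ uniformly along the family of meshes. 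For $\rho_h\sim|\Omega|^{1-\alpha/d}\min$-scale $h^\alpha$, we only need the ratio $\eta_h^{n+1}/\rho_h\to0$, and this is exactly the hypothesis. I expect no real obstacle here beyond stating this carefully.

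\textbf{Step 4 (consequences).} By Theorem~\ref{t:ho_max_p} and Remark~\ref{remark:relaxed_bounds}, $\sfV^{n+1}$ satisfies the relaxed nodal bounds. We then repeat the argument of Proposition~\ref{prop:mass_conservation_redistribution} with the relaxed capacities $R_i^{\pm,{\rm rel}}$. For $\Delta M^{n+1}>0$,
\[
  \Delta M^{n+1}\le M_{\max,{\rm rel}}^{n+1}-M_1(\sfV^{n+1})=\sum_i R_i^{+,{\rm rel}} ,
\]
and the case $\Delta M^{n+1}<0$ is symmetric. So the capacity is sufficient, $\alpha^\pm=|\Delta M^{n+1}|/\sum_i R_i^{\pm,{\rm rel}}$, and the redistribution \eqref{eq:mass:limited} restores $M_1(\sfU^{{\rm H},n+1})$ exactly. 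The convex-combination argument keeps the relaxed bounds.
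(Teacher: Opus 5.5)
Your proof is correct and follows essentially the same route as the paper's: the relaxed interval extends $C_{\rm rel}\rho_h$ beyond the strict one on each side, $M_1(\sfU^n)$ lies inside the strict interval, $\eta_h^{n+1}=o(\rho_h)$ is absorbed for small $h$, and the argument of Proposition~\ref{prop:mass_conservation_redistribution} is rerun with relaxed capacities. You also justify two points the paper leaves implicit: $\sfU^n$ satisfies the strict bounds because each node lies in its own patch, and $\rho_h>0$.
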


\begin{proof}
  Since $\sfU^n$ satisfies the strict bounds, it also satisfies the relaxed bounds. Hence
  \[
    M_{\min,{\rm rel}}^{n+1}
    \le
    M_1(\sfU^n)
    \le
    M_{\max,{\rm rel}}^{n+1}.
  \]
  Moreover, the relaxation enlarges the admissible mass interval by
  $
    C_{\rm rel}\rho_h
    =
    C_{\rm rel}
    \sum_{i\in\calV}
    m_i^{\polP_1} r_i,
  $
  on both sides. Therefore,
  \[
    \min\left\{
      M_1(\sfU^n)-M_{\min,{\rm rel}}^{n+1},
      M_{\max,{\rm rel}}^{n+1}-M_1(\sfU^n)
    \right\}
    \ge
    C_{\rm rel}\rho_h .
  \]
  Since $\eta_h^{n+1}=o(\rho_h)$, we have, for sufficiently small $h$,
  $
    \eta_h^{n+1}
    \le
    C_{\rm rel}\rho_h .
  $
  Thus,
  \[
    M_{\min,{\rm rel}}^{n+1}
    \le
    M_1(\sfU^{{\rm H},n+1})
    \le
    M_{\max,{\rm rel}}^{n+1}.
  \]
  This proves the proposition.
  The last assertion follows from the equivalence between mass compatibility and sufficient redistribution capacity, and from Proposition~\ref{prop:mass_conservation_redistribution}.
\end{proof}

\begin{algorithm}[h]
  \renewcommand{\algorithmicrequire}{\textbf{Input:}}
  \renewcommand{\algorithmicensure}{\textbf{Output:}}
  \caption{SSP-RK(5,4) residual-viscosity method with graph-Poisson limiting}
  \label{alg:rv_graph_poisson_limiter}
  \begin{algorithmic}[1]
    \Require {$\sfU_h^n$ at time $t_n$.}
    \Ensure {$\sfU_h^{n+1}$ at time $t_{n+1}$.}

    \State Compute the time step $\tau_n$ from the CFL condition \eqref{eq:cfl}.
    \State Set $\sfU_h^{(0)}=\sfU_h^n$.

    \For{$s=1,\ldots,5$}
      \State Compute the SSP-RK stage high-order residual-viscosity candidate
      $\sfU^{{\rm H},(s)}$ from \eqref{eq:HO} using the SSP-RK(5,4) coefficients.

      \State Compute the corresponding SSP-RK stage low-order invariant-domain candidate
      $\sfU^{{\rm L},(s)}$ from \eqref{eq:LO} using the same SSP-RK(5,4) coefficients.

      \State Compute the local admissible bounds
      $\sfU_i^{(s),\min}$ and $\sfU_i^{(s),\max}$ from \eqref{eq:Umax-Umin}, and apply the relaxation technique from Remark~\ref{remark:relaxed_bounds}.

      \State Construct a conservative graph-flux representation of the stage difference:
      \[
        \sfU_i^{{\rm H},(s)}
        =
        \sfU_i^{{\rm L},(s)}
        +
        \frac{1}{m_i^{\polP_1}}
        \sum_{j\in\calI(S_i^{\polP_1})}
        \sfF_{ij}^{(s)}
        +
        \delta^{(s)},
        \qquad
        \sfF_{ij}^{(s)}=-\sfF_{ji}^{(s)},
      \]
      by solving the graph-Laplacian problem \eqref{eq:LQ}.

      \State Apply the convex limiter \eqref{eq:lij} to the graph fluxes and compute the first limited stage state
      \[
        \sfV_i^{(s)}
        =
        \sfU_i^{{\rm L},(s)}
        +
        \frac{1}{m_i^{\polP_1}}
        \sum_{j\in\calI(S_i^{\polP_1})}
        \ell_{ij}^{(s)}\sfF_{ij}^{(s)} .
      \]

      \State Compute the remaining fine-$\polP_1$ mass defect
      \[
        \Delta M^{(s)}
        =
        \sum_{i\in\calV}m_i^{\polP_1}
        \left(
          \sfU_i^{{\rm H},(s)}
          -
          \sfV_i^{(s)}
        \right).
      \]

      \State Apply the capacity-based mass redistribution step to obtain the limited stage value
      $\sfU_h^{(s)}$ from $\sfV_h^{(s)}$ while preserving the admissible bounds and, whenever the capacity is sufficient, the fine-$\polP_1$ mass of $\sfU^{{\rm H},(s)}$.
    \EndFor

    \State Set $\sfU_h^{n+1}=\sfU_h^{(5)}$.
    \State Update time: $t_{n+1}=t_n+\tau_n$.

  \end{algorithmic}
\end{algorithm}

\section{Numerical illustration}
\label{Sec:numerics}

In contrast to traditional limiting techniques such as
\citep{Guermond_etal_2014, Guermond_Nazarov_Popov_Tomas_2018, Kuzmin_2020},
the Graph-Poisson limiter presented in this work requires only two inputs:
the high-order solution $\sfU^{{\rm H},n+1}$ and the low-order
invariant-domain preserving solution $\sfU^{{\rm L},n+1}$. We use the
fourth-order, five-stage strong stability preserving explicit Runge-Kutta
method of \citep{Kraaijevanger_1991}. The time step is computed from the
CFL condition \eqref{eq:cfl}. The limiting procedure is applied at every
nodal point and at each Runge-Kutta stage. The numerical implementation is
summarized in Algorithm~\ref{alg:rv_graph_poisson_limiter}.

In this section we demonstrate the proposed numerical algorithm for solving several linear and nonlinear problems from scalar conservation laws. All computations are carried out using the FEniCSx project, an open-source finite element library. The implementation uses C++ interface of the FEniCSx project and written in a way that it is dimensional and polynimial degree independent. In this work, we restrict the numerical tests in two space dimensions and we use polynomial spaces from first to forth order. 

In all computations using relaxed bounds, we set $\alpha=3/2$.
For the three-body rotation test, we use $C_{\rm rel}(p)=p$, which prevents the limiter from clipping the local maxima and minima of the solution. For the remaining tests, we set $C_{\rm rel}(p)=1$. We set $C_{\rm R} = 1$ for transport benchmarks, $C_{\rm R} = 4$ for Burgers' benchmark, and $C_{\rm R} = 10$ for KPP problem.

We use fully unstructured meshes generated by a Delaunay triangulation in all presented benchmarks. All presented convergence histories use relative norms in $L^1$, $L^2$ or $L^{\infty}$.

\subsection{Smooth transport problem}
We want to solve a linear transport equation
\[
  \p_t u + \bbetaa\SCAL \GRAD u = 0, \quad \bbetaa := (-1, -1)^\top
\]
in $\bx\in \Omega = \{\bx \in \polR^2 : x_1^2 + x_2^2 \le 1\}$ a unit disk, and $t\in[0, 0.2]$,
with the initial condition given by
\[
  u_0(x,y)
  =
  \begin{cases}
    \displaystyle
    \exp\left(
    \frac{r^2+2r_0^2}{r^2-r_0^2}
    \right),
    &
      r^2<r_0^2,
    \\[3mm]
    0,
    &
      r^2\ge r_0^2,
  \end{cases},
  \quad \mbox{with }
  r^2
  =
  \left(x-\frac{7}{20}\right)^2
  +
  \left(y-\frac12\right)^2.
\]
centered at
$
(x_0,y_0)=\left(\frac{7}{20},\frac12\right),
$
with radius
$
r_0=0.4.
$

We set a small CFL number $0.1$ in order to reduce temporal error. Convergence history for different polynomial degrees and norms are presented in Tables~\ref{tab:conv_p1_rv_limiter} and \ref{tab:smooth_summary}.

\begin{table}[htbp]
  \centering
  \caption{Convergence history for the smooth Gaussian transport problem for different polynomial spaces. Comparison of the residual-viscosity method without limiting and with convex limiting.}
  \label{tab:conv_p1_rv_limiter}
  \vspace{0.in}
  \begin{center}
    $\polP_1$
  \end{center}
  \vspace{0.0in}
  \resizebox{\textwidth}{!}{%
    \begin{tabular}{r|cc|cc|cc|cc|cc|cc}
      \hline
      \multirow{2}{*}{dofs}
      & \multicolumn{6}{c|}{RV}
      & \multicolumn{6}{c}{RV + limiter}
      \\
      \cline{2-13}
      & $L^1$ & $p$ & $L^2$ & $p$ & $L^\infty$ & $p$
                                  & $L^1$ & $p$ & $L^2$ & $p$ & $L^\infty$ & $p$
      \\
      \hline
      230    & $4.75\times 10^{-1}$ & 0.00 & $3.33\times 10^{-1}$ & 0.00 & $3.46\times 10^{-1}$ & 0.00
             & $4.75\times 10^{-1}$ & 0.00 & $3.33\times 10^{-1}$ & 0.00 & $3.46\times 10^{-1}$ & 0.00 \\
      478    & $1.96\times 10^{-1}$ & 2.42 & $1.38\times 10^{-1}$ & 2.41 & $1.30\times 10^{-1}$ & 2.68
             & $1.96\times 10^{-1}$ & 2.42 & $1.38\times 10^{-1}$ & 2.41 & $1.30\times 10^{-1}$ & 2.68 \\
      919    & $9.39\times 10^{-2}$ & 2.25 & $6.55\times 10^{-2}$ & 2.29 & $5.07\times 10^{-2}$ & 2.88
             & $9.40\times 10^{-2}$ & 2.25 & $6.55\times 10^{-2}$ & 2.29 & $5.07\times 10^{-2}$ & 2.87 \\
      1902   & $3.84\times 10^{-2}$ & 2.46 & $2.74\times 10^{-2}$ & 2.39 & $2.33\times 10^{-2}$ & 2.14
             & $3.84\times 10^{-2}$ & 2.46 & $2.74\times 10^{-2}$ & 2.39 & $2.33\times 10^{-2}$ & 2.14 \\
      3895   & $1.62\times 10^{-2}$ & 2.40 & $1.19\times 10^{-2}$ & 2.33 & $1.10\times 10^{-2}$ & 2.09
             & $1.62\times 10^{-2}$ & 2.40 & $1.19\times 10^{-2}$ & 2.33 & $1.10\times 10^{-2}$ & 2.09 \\
      8042   & $6.58\times 10^{-3}$ & 2.49 & $4.99\times 10^{-3}$ & 2.40 & $4.95\times 10^{-3}$ & 2.21
             & $6.58\times 10^{-3}$ & 2.49 & $4.99\times 10^{-3}$ & 2.40 & $4.95\times 10^{-3}$ & 2.21 \\
      16815  & $2.67\times 10^{-3}$ & 2.45 & $2.07\times 10^{-3}$ & 2.38 & $2.09\times 10^{-3}$ & 2.34
             & $2.67\times 10^{-3}$ & 2.45 & $2.07\times 10^{-3}$ & 2.38 & $2.09\times 10^{-3}$ & 2.34 \\
      35077  & $1.12\times 10^{-3}$ & 2.36 & $8.86\times 10^{-4}$ & 2.31 & $8.79\times 10^{-4}$ & 2.36
             & $1.12\times 10^{-3}$ & 2.36 & $8.86\times 10^{-4}$ & 2.31 & $8.79\times 10^{-4}$ & 2.36 \\
      73751  & $4.80\times 10^{-4}$ & 2.28 & $3.87\times 10^{-4}$ & 2.23 & $3.69\times 10^{-4}$ & 2.33
             & $4.80\times 10^{-4}$ & 2.28 & $3.87\times 10^{-4}$ & 2.23 & $3.69\times 10^{-4}$ & 2.33 \\
      154288 & $2.11\times 10^{-4}$ & 2.22 & $1.73\times 10^{-4}$ & 2.18 & $1.65\times 10^{-4}$ & 2.19
             & $2.11\times 10^{-4}$ & 2.22 & $1.73\times 10^{-4}$ & 2.18 & $1.65\times 10^{-4}$ & 2.19 \\
      323307 & $9.55\times 10^{-5}$ & 2.15 & $7.90\times 10^{-5}$ & 2.12 & $7.42\times 10^{-5}$ & 2.16
             & $9.55\times 10^{-5}$ & 2.15 & $7.90\times 10^{-5}$ & 2.12 & $7.42\times 10^{-5}$ & 2.16 \\
      678732 & $4.38\times 10^{-5}$ & 2.10 & $3.65\times 10^{-5}$ & 2.08 & $3.46\times 10^{-5}$ & 2.05
             & $4.38\times 10^{-5}$ & 2.10 & $3.65\times 10^{-5}$ & 2.08 & $3.46\times 10^{-5}$ & 2.05 \\
      \hline
    \end{tabular}
  }

  \vspace{0.1in}
  \begin{center}
    $\polP_2$
  \end{center}
  \vspace{0.0in}
  \resizebox{\textwidth}{!}{%
    \begin{tabular}{r|cc|cc|cc|cc|cc|cc}
      \hline
      \multirow{2}{*}{dofs}
      & \multicolumn{6}{c|}{RV}
      & \multicolumn{6}{c}{RV + limiter}
      \\
      \cline{2-13}
      & $L^1$ & $p$ & $L^2$ & $p$ & $L^\infty$ & $p$
                                  & $L^1$ & $p$ & $L^2$ & $p$ & $L^\infty$ & $p$
      \\
      \hline
      457     & $1.40\times 10^{-1}$ & 0.00 & $8.35\times 10^{-2}$ & 0.00 & $6.68\times 10^{-2}$ & 0.00
              & $1.41\times 10^{-1}$ & 0.00 & $8.39\times 10^{-2}$ & 0.00 & $6.62\times 10^{-2}$ & 0.00 \\
      871     & $5.66\times 10^{-2}$ & 2.80 & $3.48\times 10^{-2}$ & 2.71 & $3.22\times 10^{-2}$ & 2.26
              & $5.77\times 10^{-2}$ & 2.78 & $3.50\times 10^{-2}$ & 2.71 & $3.27\times 10^{-2}$ & 2.19 \\
      1841    & $1.36\times 10^{-2}$ & 3.82 & $9.11\times 10^{-3}$ & 3.58 & $9.40\times 10^{-3}$ & 3.29
              & $1.37\times 10^{-2}$ & 3.83 & $9.12\times 10^{-3}$ & 3.59 & $9.47\times 10^{-3}$ & 3.31 \\
      3577    & $4.45\times 10^{-3}$ & 3.36 & $3.18\times 10^{-3}$ & 3.17 & $3.72\times 10^{-3}$ & 2.79
              & $4.47\times 10^{-3}$ & 3.38 & $3.18\times 10^{-3}$ & 3.17 & $3.73\times 10^{-3}$ & 2.81 \\
      7465    & $1.22\times 10^{-3}$ & 3.52 & $9.17\times 10^{-4}$ & 3.38 & $1.22\times 10^{-3}$ & 3.04
              & $1.23\times 10^{-3}$ & 3.52 & $9.17\times 10^{-4}$ & 3.38 & $1.22\times 10^{-3}$ & 3.04 \\
      15375   & $3.59\times 10^{-4}$ & 3.38 & $2.86\times 10^{-4}$ & 3.23 & $4.17\times 10^{-4}$ & 2.96
              & $3.60\times 10^{-4}$ & 3.39 & $2.86\times 10^{-4}$ & 3.23 & $4.17\times 10^{-4}$ & 2.97 \\
      31873   & $1.10\times 10^{-4}$ & 3.24 & $9.22\times 10^{-5}$ & 3.10 & $1.34\times 10^{-4}$ & 3.11
              & $1.10\times 10^{-4}$ & 3.24 & $9.22\times 10^{-5}$ & 3.10 & $1.34\times 10^{-4}$ & 3.11 \\
      66833   & $3.46\times 10^{-5}$ & 3.13 & $3.01\times 10^{-5}$ & 3.02 & $4.59\times 10^{-5}$ & 2.89
              & $3.46\times 10^{-5}$ & 3.14 & $3.01\times 10^{-5}$ & 3.02 & $4.59\times 10^{-5}$ & 2.90 \\
      139691  & $1.11\times 10^{-5}$ & 3.07 & $9.99\times 10^{-6}$ & 2.99 & $1.55\times 10^{-5}$ & 2.94
              & $1.11\times 10^{-5}$ & 3.07 & $9.99\times 10^{-6}$ & 2.99 & $1.55\times 10^{-5}$ & 2.94 \\
      294109  & $3.60\times 10^{-6}$ & 3.04 & $3.28\times 10^{-6}$ & 2.99 & $5.11\times 10^{-6}$ & 2.98
              & $3.60\times 10^{-6}$ & 3.04 & $3.28\times 10^{-6}$ & 2.99 & $5.11\times 10^{-6}$ & 2.98 \\
      615857  & $1.18\times 10^{-6}$ & 3.02 & $1.09\times 10^{-6}$ & 3.00 & $1.73\times 10^{-6}$ & 2.94
              & $1.18\times 10^{-6}$ & 3.02 & $1.09\times 10^{-6}$ & 3.00 & $1.73\times 10^{-6}$ & 2.94 \\
      1291353 & $3.87\times 10^{-7}$ & 3.01 & $3.58\times 10^{-7}$ & 3.00 & $5.73\times 10^{-7}$ & 2.98
              & $3.87\times 10^{-7}$ & 3.01 & $3.58\times 10^{-7}$ & 3.00 & $5.73\times 10^{-7}$ & 2.98 \\
      \hline
    \end{tabular}%
  }

  \vspace{0.1in}
  \begin{center}
    $\polP_3$
  \end{center}
  \vspace{0.0in}
  \resizebox{\textwidth}{!}{%
    \begin{tabular}{r|cc|cc|cc|cc|cc|cc}
      \hline
      \multirow{2}{*}{dofs}
      & \multicolumn{6}{c|}{RV}
      & \multicolumn{6}{c}{RV + limiter}
      \\
      \cline{2-13}
      & $L^1$ & $p$ & $L^2$ & $p$ & $L^\infty$ & $p$
                                  & $L^1$ & $p$ & $L^2$ & $p$ & $L^\infty$ & $p$
      \\
      \hline
      1003    & $3.15\times 10^{-2}$ & 0.00 & $1.77\times 10^{-2}$ & 0.00 & $1.76\times 10^{-2}$ & 0.00
              & $3.28\times 10^{-2}$ & 0.00 & $1.78\times 10^{-2}$ & 0.00 & $1.77\times 10^{-2}$ & 0.00 \\
      1924    & $1.07\times 10^{-2}$ & 3.32 & $8.13\times 10^{-3}$ & 2.39 & $1.14\times 10^{-2}$ & 1.35
              & $1.13\times 10^{-2}$ & 3.28 & $8.17\times 10^{-3}$ & 2.39 & $1.14\times 10^{-2}$ & 1.36 \\
      4090    & $1.67\times 10^{-3}$ & 4.92 & $1.14\times 10^{-3}$ & 5.22 & $2.11\times 10^{-3}$ & 4.47
              & $1.77\times 10^{-3}$ & 4.92 & $1.14\times 10^{-3}$ & 5.23 & $2.11\times 10^{-3}$ & 4.47 \\
      7975    & $3.84\times 10^{-4}$ & 4.41 & $2.85\times 10^{-4}$ & 4.14 & $6.02\times 10^{-4}$ & 3.76
              & $3.85\times 10^{-4}$ & 4.56 & $2.85\times 10^{-4}$ & 4.15 & $6.04\times 10^{-4}$ & 3.74 \\
      16690   & $7.37\times 10^{-5}$ & 4.47 & $6.11\times 10^{-5}$ & 4.17 & $1.49\times 10^{-4}$ & 3.78
              & $7.60\times 10^{-5}$ & 4.40 & $6.10\times 10^{-5}$ & 4.17 & $1.50\times 10^{-4}$ & 3.77 \\
      34441   & $1.21\times 10^{-5}$ & 4.98 & $1.11\times 10^{-5}$ & 4.70 & $3.12\times 10^{-5}$ & 4.32
              & $1.24\times 10^{-5}$ & 5.01 & $1.11\times 10^{-5}$ & 4.70 & $3.12\times 10^{-5}$ & 4.33 \\
      71494   & $2.29\times 10^{-6}$ & 4.57 & $2.23\times 10^{-6}$ & 4.39 & $7.29\times 10^{-6}$ & 3.98
              & $2.38\times 10^{-6}$ & 4.52 & $2.23\times 10^{-6}$ & 4.39 & $7.29\times 10^{-6}$ & 3.98 \\
      150055  & $4.57\times 10^{-7}$ & 4.34 & $4.72\times 10^{-7}$ & 4.19 & $1.61\times 10^{-6}$ & 4.08
              & $4.58\times 10^{-7}$ & 4.45 & $4.72\times 10^{-7}$ & 4.19 & $1.61\times 10^{-6}$ & 4.08 \\
      313843  & $9.89\times 10^{-8}$ & 4.15 & $1.04\times 10^{-7}$ & 4.11 & $3.38\times 10^{-7}$ & 4.23
              & $9.89\times 10^{-8}$ & 4.15 & $1.04\times 10^{-7}$ & 4.11 & $3.38\times 10^{-7}$ & 4.23 \\
      661075  & $2.15\times 10^{-8}$ & 4.09 & $2.29\times 10^{-8}$ & 4.06 & $7.14\times 10^{-8}$ & 4.17
              & $2.15\times 10^{-8}$ & 4.09 & $2.29\times 10^{-8}$ & 4.06 & $7.14\times 10^{-8}$ & 4.17 \\
      1384708 & $4.89\times 10^{-9}$ & 4.01 & $5.21\times 10^{-9}$ & 4.00 & $1.64\times 10^{-8}$ & 3.97
              & $4.89\times 10^{-9}$ & 4.01 & $5.21\times 10^{-9}$ & 4.00 & $1.64\times 10^{-8}$ & 3.97 \\
      \hline
    \end{tabular}%
  }

  \vspace{0.1in}
  \begin{center}
    $\polP_4$
  \end{center}
  \vspace{0.0in}
  \resizebox{\textwidth}{!}{%
    \begin{tabular}{r|cc|cc|cc|cc|cc|cc}
      \hline
      \multirow{2}{*}{dofs}
      & \multicolumn{6}{c|}{RV}
      & \multicolumn{6}{c}{RV + limiter}
      \\
      \cline{2-13}
      & $L^1$ & $p$ & $L^2$ & $p$ & $L^\infty$ & $p$
                                  & $L^1$ & $p$ & $L^2$ & $p$ & $L^\infty$ & $p$
      \\
      \hline
      1761    & $1.29\times 10^{-2}$ & 0.00 & $9.03\times 10^{-3}$ & 0.00 & $1.25\times 10^{-2}$ & 0.00
              & $1.34\times 10^{-2}$ & 0.00 & $9.06\times 10^{-3}$ & 0.00 & $1.26\times 10^{-2}$ & 0.00 \\
      3389    & $4.27\times 10^{-3}$ & 3.37 & $3.77\times 10^{-3}$ & 2.67 & $7.85\times 10^{-3}$ & 1.41
              & $4.33\times 10^{-3}$ & 3.45 & $3.77\times 10^{-3}$ & 2.68 & $7.85\times 10^{-3}$ & 1.44 \\
      7225    & $4.10\times 10^{-4}$ & 6.20 & $3.32\times 10^{-4}$ & 6.41 & $8.33\times 10^{-4}$ & 5.93
              & $4.99\times 10^{-4}$ & 5.71 & $3.38\times 10^{-4}$ & 6.37 & $8.37\times 10^{-4}$ & 5.91 \\
      14113   & $7.51\times 10^{-5}$ & 5.07 & $6.50\times 10^{-5}$ & 4.87 & $1.64\times 10^{-4}$ & 4.86
              & $7.75\times 10^{-5}$ & 5.56 & $6.50\times 10^{-5}$ & 4.92 & $1.64\times 10^{-4}$ & 4.87 \\
      29577   & $8.13\times 10^{-6}$ & 6.01 & $7.58\times 10^{-6}$ & 5.81 & $2.75\times 10^{-5}$ & 4.82
              & $9.12\times 10^{-6}$ & 5.79 & $7.61\times 10^{-6}$ & 5.80 & $2.75\times 10^{-5}$ & 4.82 \\
      61093   & $9.10\times 10^{-7}$ & 6.04 & $9.57\times 10^{-7}$ & 5.71 & $5.54\times 10^{-6}$ & 4.41
              & $1.10\times 10^{-6}$ & 5.83 & $9.65\times 10^{-7}$ & 5.69 & $5.55\times 10^{-6}$ & 4.41 \\
      126905  & $1.17\times 10^{-7}$ & 5.61 & $1.35\times 10^{-7}$ & 5.36 & $7.13\times 10^{-7}$ & 5.61
              & $1.99\times 10^{-7}$ & 4.68 & $1.43\times 10^{-7}$ & 5.22 & $7.14\times 10^{-7}$ & 5.61 \\
      266481  & $1.85\times 10^{-8}$ & 4.97 & $2.06\times 10^{-8}$ & 5.07 & $9.40\times 10^{-8}$ & 5.46
              & $1.93\times 10^{-8}$ & 6.29 & $2.06\times 10^{-8}$ & 5.23 & $9.39\times 10^{-8}$ & 5.47 \\
      557533  & $2.79\times 10^{-9}$ & 5.13 & $3.40\times 10^{-9}$ & 4.87 & $1.56\times 10^{-8}$ & 4.86
              & $2.79\times 10^{-9}$ & 5.24 & $3.40\times 10^{-9}$ & 4.87 & $1.56\times 10^{-8}$ & 4.86 \\
      1174649 & $4.57\times 10^{-10}$ & 4.85 & $5.46\times 10^{-10}$ & 4.91 & $2.46\times 10^{-9}$ & 4.95
              & $4.60\times 10^{-10}$ & 4.84 & $5.46\times 10^{-10}$ & 4.91 & $2.46\times 10^{-9}$ & 4.95 \\
      \hline
    \end{tabular}%
  }
\end{table}

\begin{table}[htbp]
  \centering
  \caption{Smooth transport problem: asymptotic convergence rates for the RV+limiter method.}
  \label{tab:smooth_summary}
  \begin{tabular}{c|ccc}
    \hline
    polynomial space & $L^1$ rate & $L^2$ rate & $L^\infty$ rate \\
    \hline
    $\polP_1$ & 2.10 & 2.08 & 2.05 \\
    $\polP_2$ & 3.01 & 3.00 & 2.98 \\
    $\polP_3$ & 4.01 & 4.00 & 3.97 \\
    $\polP_4$ & 4.84 & 4.91 & 4.95 \\
    \hline
  \end{tabular}
\end{table}

The limiter is inactive for this smooth test on all common meshes; therefore the RV and RV+limiter errors coincide up to the reported precision. We observe optimal convergence rates for all polynomial spaces used in this test.

\subsection{Three body rotation}
In this test, we want to solve a linear transport equation
\[
  \p_t u + \bbetaa\SCAL \GRAD u = 0, \quad \bbetaa := (-2\pi y, 2\pi x)^\top,
\]
in $\bx\in \Omega = \{\bx \in \polR^2 : x^2 + y^2 \le 1\}$ a unit disk, and $t\in[0, 1]$,

The initial condition consists of three bodies: a slotted cylinder, a cosine cone, and a conical hump. Let
$
  r_0=0.3,
$
and define
$
  r_1(x,y)=\big(x^2+(y-0.5)^2\big)^{-\frac12},
  \ \
  r_2(x,y)=\big((x+0.5)^2+y^2\big)^{-\frac12},
  \ \
  r_3(x,y)
  =
  \Bigg(
  \Big(x-\frac{\sqrt{3}}{4}\Big)^2
  +
  \Big(y+\frac14\Big)^2
  \Bigg)^{-\frac12}.
$
The initial condition is
\[
  u_0(x,y)=
  \begin{cases}
    1,
    &
      r_1(x,y)\le r_0
      \ \text{and}\
      \left(|x|\ge 0.05 \ \text{or}\ y\ge 0.7\right),
    \\[2mm]
    \displaystyle
    \frac14
    \left(
    1+\cos\left(\pi \frac{r_2(x,y)}{r_0}\right)
    \right),
    &
      r_2(x,y)\le r_0,
    \\[3mm]
    \displaystyle
    1-\frac{r_3(x,y)}{r_0},
    &
      r_3(x,y)\le r_0,
    \\[2mm]
    0,
    &
      \text{otherwise}.
  \end{cases}
\]

 The time-step is calculated using the CFL number $0.3$. The numerial results are depicted in Figure~\ref{fig:transport} for polynomial degrees $\polP_1$--$\polP_4$ and different mesh resolutions. All results show to be convergent to the exact solution. We note that, we observe clipping of local extreemas if no relaxations are used (these results are not reported here).

In Tables~\ref{tab:three_body_l1_summary} and \ref{tab:three_body_l1_convergence}, we present convergence history with respect to the relative $L^1$-norm of the error. One can observe almost linear convergence rates for all polynomials, with slightly improved rates for higher polynomial degrees. 

\begin{table}[htbp]
  \centering
  \caption{Convergence history for the nonsmooth three-body rotation problem. Only the $L^1$ error is reported.}
  \label{tab:three_body_l1_convergence}
  \vspace{0.05in}
  \resizebox{\textwidth}{!}{%
    \begin{tabular}{r|cc|cc||r|cc|cc}
      \hline
      \multicolumn{5}{c||}{$\polP_1$}
      &
      \multicolumn{5}{c}{$\polP_2$}
      \\
      \hline
      \multirow{2}{*}{dofs}
      & \multicolumn{2}{c|}{RV}
      & \multicolumn{2}{c||}{RV + limiter}
      &
      \multirow{2}{*}{dofs}
      & \multicolumn{2}{c|}{RV}
      & \multicolumn{2}{c}{RV + limiter}
      \\
      \cline{2-5}\cline{7-10}
      & $L^1$ & $p$ & $L^1$ & $p$
      & & $L^1$ & $p$ & $L^1$ & $p$
      \\
      \hline
      937    & $6.95\times 10^{-1}$ & 0.00 & $7.16\times 10^{-1}$ & 0.00
      & 1941   & $3.76\times 10^{-1}$ & 0.00 & $3.79\times 10^{-1}$ & 0.00 \\
      2047   & $5.16\times 10^{-1}$ & 0.76 & $5.18\times 10^{-1}$ & 0.83
      & 3649   & $3.01\times 10^{-1}$ & 0.70 & $3.04\times 10^{-1}$ & 0.69 \\
      4017   & $3.92\times 10^{-1}$ & 0.82 & $3.94\times 10^{-1}$ & 0.81
      & 8045   & $2.13\times 10^{-1}$ & 0.88 & $2.23\times 10^{-1}$ & 0.79 \\
      8619   & $2.88\times 10^{-1}$ & 0.81 & $2.89\times 10^{-1}$ & 0.81
      & 15863  & $1.65\times 10^{-1}$ & 0.75 & $1.73\times 10^{-1}$ & 0.74 \\
      17685  & $2.17\times 10^{-1}$ & 0.78 & $2.19\times 10^{-1}$ & 0.77
      & 34181  & $1.22\times 10^{-1}$ & 0.78 & $1.27\times 10^{-1}$ & 0.80 \\
      38089  & $1.59\times 10^{-1}$ & 0.82 & $1.60\times 10^{-1}$ & 0.82
      & 70313  & $9.16\times 10^{-2}$ & 0.80 & $9.71\times 10^{-2}$ & 0.75 \\
      79709  & $1.21\times 10^{-1}$ & 0.74 & $1.22\times 10^{-1}$ & 0.75
      & 151739 & $6.77\times 10^{-2}$ & 0.78 & $7.23\times 10^{-2}$ & 0.77 \\
      162381 & $9.46\times 10^{-2}$ & 0.69 & $9.52\times 10^{-2}$ & 0.69
      & 317941 & $5.01\times 10^{-2}$ & 0.81 & $5.41\times 10^{-2}$ & 0.78 \\
      351389 & $6.99\times 10^{-2}$ & 0.78 & $6.98\times 10^{-2}$ & 0.80
      & 648229 & $3.86\times 10^{-2}$ & 0.73 & $4.21\times 10^{-2}$ & 0.71 \\
      \hline
    \end{tabular}%
  }

  \vspace{0.15in}

  \resizebox{\textwidth}{!}{%
    \begin{tabular}{r|cc|cc||r|cc|cc}
      \hline
      \multicolumn{5}{c||}{$\polP_3$}
      &
      \multicolumn{5}{c}{$\polP_4$}
      \\
      \hline
      \multirow{2}{*}{dofs}
      & \multicolumn{2}{c|}{RV}
      & \multicolumn{2}{c||}{RV + limiter}
      &
      \multirow{2}{*}{dofs}
      & \multicolumn{2}{c|}{RV}
      & \multicolumn{2}{c}{RV + limiter}
      \\
      \cline{2-5}\cline{7-10}
      & $L^1$ & $p$ & $L^1$ & $p$
      & & $L^1$ & $p$ & $L^1$ & $p$
      \\
      \hline
      1807   & $3.62\times 10^{-1}$ & 0.00 & $3.75\times 10^{-1}$ & 0.00
      & 1825   & $3.65\times 10^{-1}$ & 0.00 & $3.77\times 10^{-1}$ & 0.00 \\
      4315   & $2.43\times 10^{-1}$ & 0.91 & $2.53\times 10^{-1}$ & 0.90
      & 3181   & $3.04\times 10^{-1}$ & 0.66 & $3.19\times 10^{-1}$ & 0.60 \\
      8137   & $1.84\times 10^{-1}$ & 0.89 & $1.99\times 10^{-1}$ & 0.77
      & 7625   & $2.00\times 10^{-1}$ & 0.96 & $2.24\times 10^{-1}$ & 0.81 \\
      17995  & $1.31\times 10^{-1}$ & 0.84 & $1.44\times 10^{-1}$ & 0.81
      & 14401  & $1.53\times 10^{-1}$ & 0.84 & $1.66\times 10^{-1}$ & 0.94 \\
      35539  & $1.01\times 10^{-1}$ & 0.77 & $1.09\times 10^{-1}$ & 0.82
      & 31897  & $1.03\times 10^{-1}$ & 0.99 & $1.15\times 10^{-1}$ & 0.94 \\
      76687  & $7.43\times 10^{-2}$ & 0.80 & $8.16\times 10^{-2}$ & 0.75
      & 63045  & $8.32\times 10^{-2}$ & 0.63 & $9.01\times 10^{-2}$ & 0.70 \\
      157885 & $5.54\times 10^{-2}$ & 0.81 & $6.08\times 10^{-2}$ & 0.81
      & 136137 & $5.97\times 10^{-2}$ & 0.86 & $6.46\times 10^{-2}$ & 0.86 \\
      340951 & $4.04\times 10^{-2}$ & 0.82 & $4.44\times 10^{-2}$ & 0.82
      & 280401 & $4.39\times 10^{-2}$ & 0.85 & $4.71\times 10^{-2}$ & 0.88 \\
      714697 & $2.96\times 10^{-2}$ & 0.84 & $3.21\times 10^{-2}$ & 0.88
      & 605725 & $3.15\times 10^{-2}$ & 0.86 & $3.42\times 10^{-2}$ & 0.83 \\
      \hline
    \end{tabular}%
  }
\end{table}

\begin{table}[htbp]
  \centering
  \caption{Three-body rotation: finest-mesh relative $L^1$ errors and last observed rates.}
  \label{tab:three_body_l1_summary}
  \begin{tabular}{c|r|cc|cc}
    \hline
    polynomial space & dofs
    & \multicolumn{2}{c|}{RV}
    & \multicolumn{2}{c}{RV + limiter}
    \\
    \cline{3-6}
    & & $L^1$ & rate & $L^1$ & rate \\
    \hline
    $\polP_1$ & 351389 & $6.99\times 10^{-2}$ & 0.78 & $6.98\times 10^{-2}$ & 0.80 \\
    $\polP_2$ & 648229 & $3.86\times 10^{-2}$ & 0.73 & $4.21\times 10^{-2}$ & 0.71 \\
    $\polP_3$ & 714697 & $2.96\times 10^{-2}$ & 0.84 & $3.21\times 10^{-2}$ & 0.88 \\
    $\polP_4$ & 605725 & $3.15\times 10^{-2}$ & 0.86 & $3.42\times 10^{-2}$ & 0.83 \\
    \hline
  \end{tabular}
\end{table}

\begin{figure}[htbp]
  \centering
  \setlength{\tabcolsep}{2pt}
  \renewcommand{\arraystretch}{1.0}

  \newcommand{\rowlabel}[1]{%
    \raisebox{0pt}[\height][\depth]{%
      \makebox[0.035\textwidth][c]{\rotatebox{90}{#1}}%
    }%
  }

  \begin{tabular}{ccccc}
   {\rotatebox{90}{\hspace{-0.3in} dofs \hspace{0.35in} $\polP_1$}}
    &
      \hspace{0.1in}
    \includegraphics[width=0.22\textwidth]{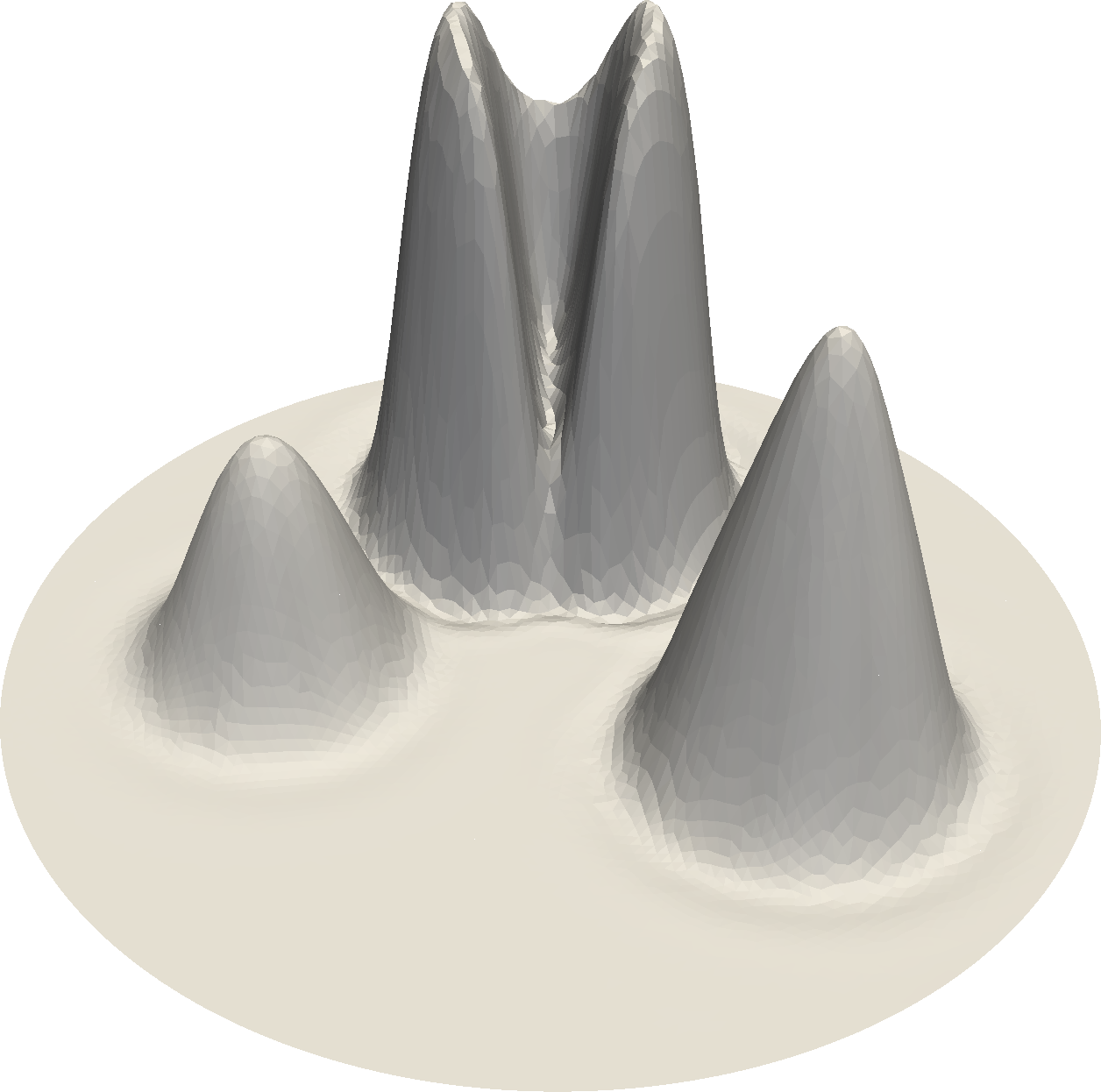}
    &
    \includegraphics[width=0.22\textwidth]{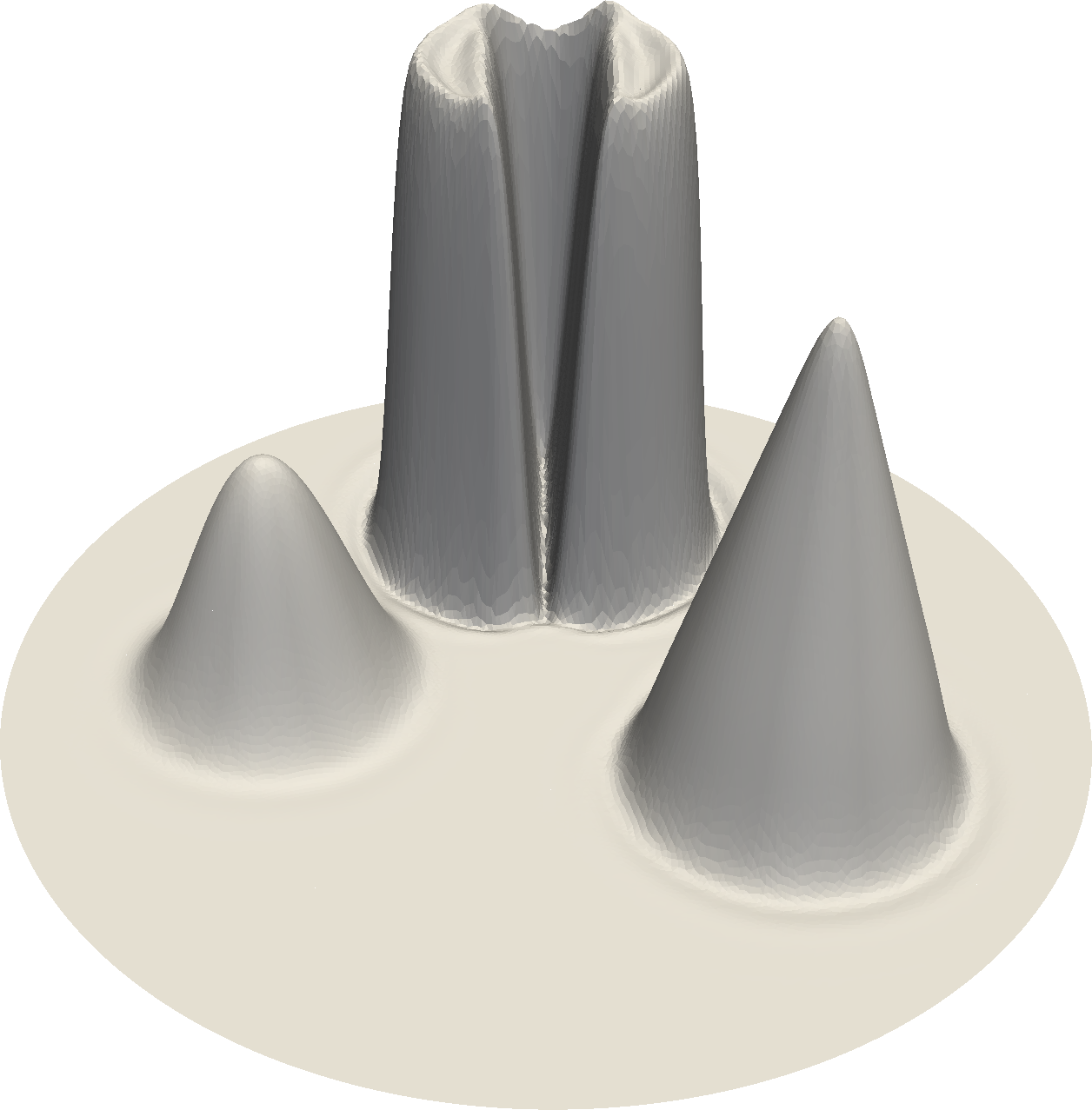}
    &
    \includegraphics[width=0.22\textwidth]{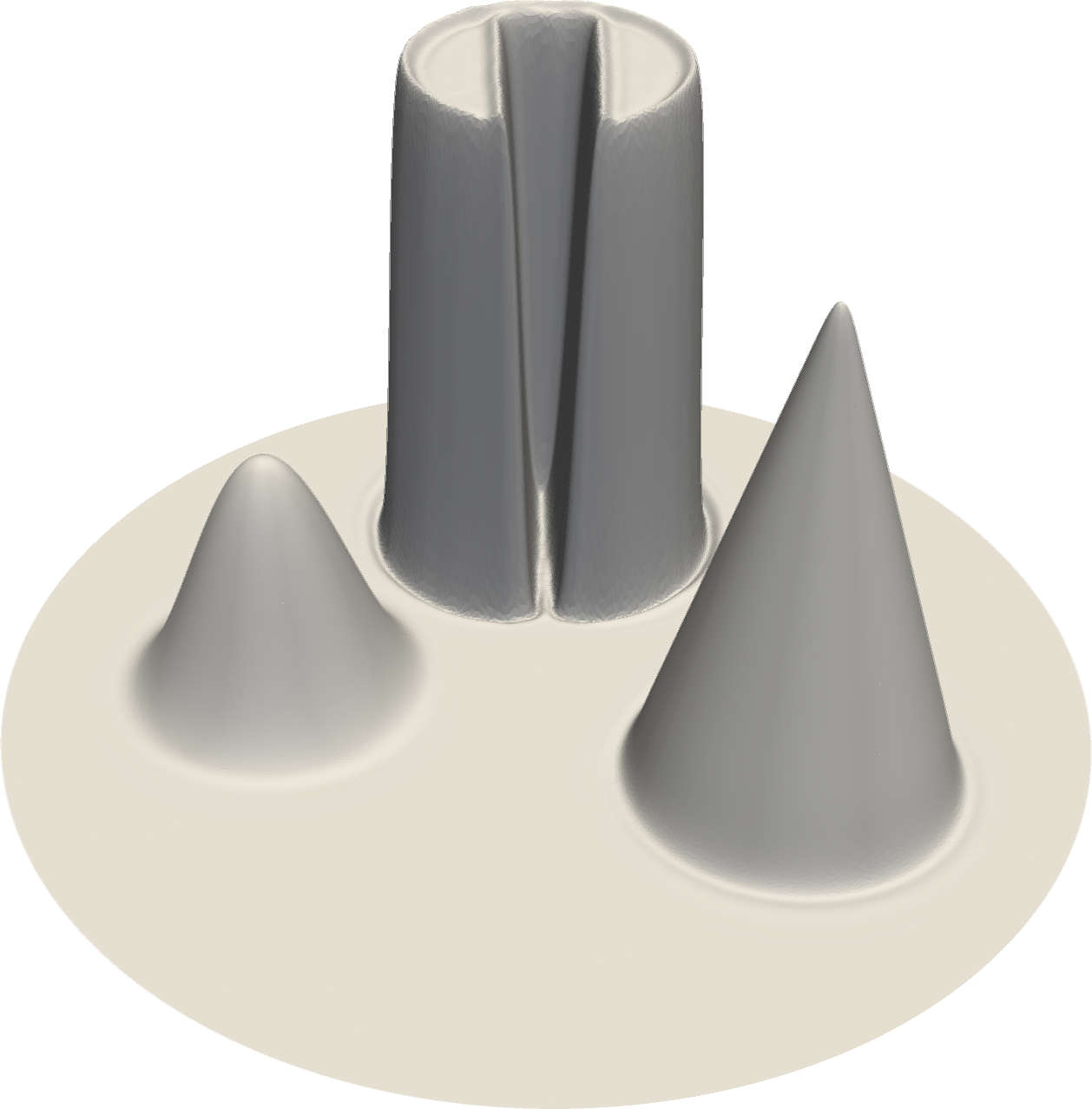}
    &
    \includegraphics[width=0.22\textwidth]{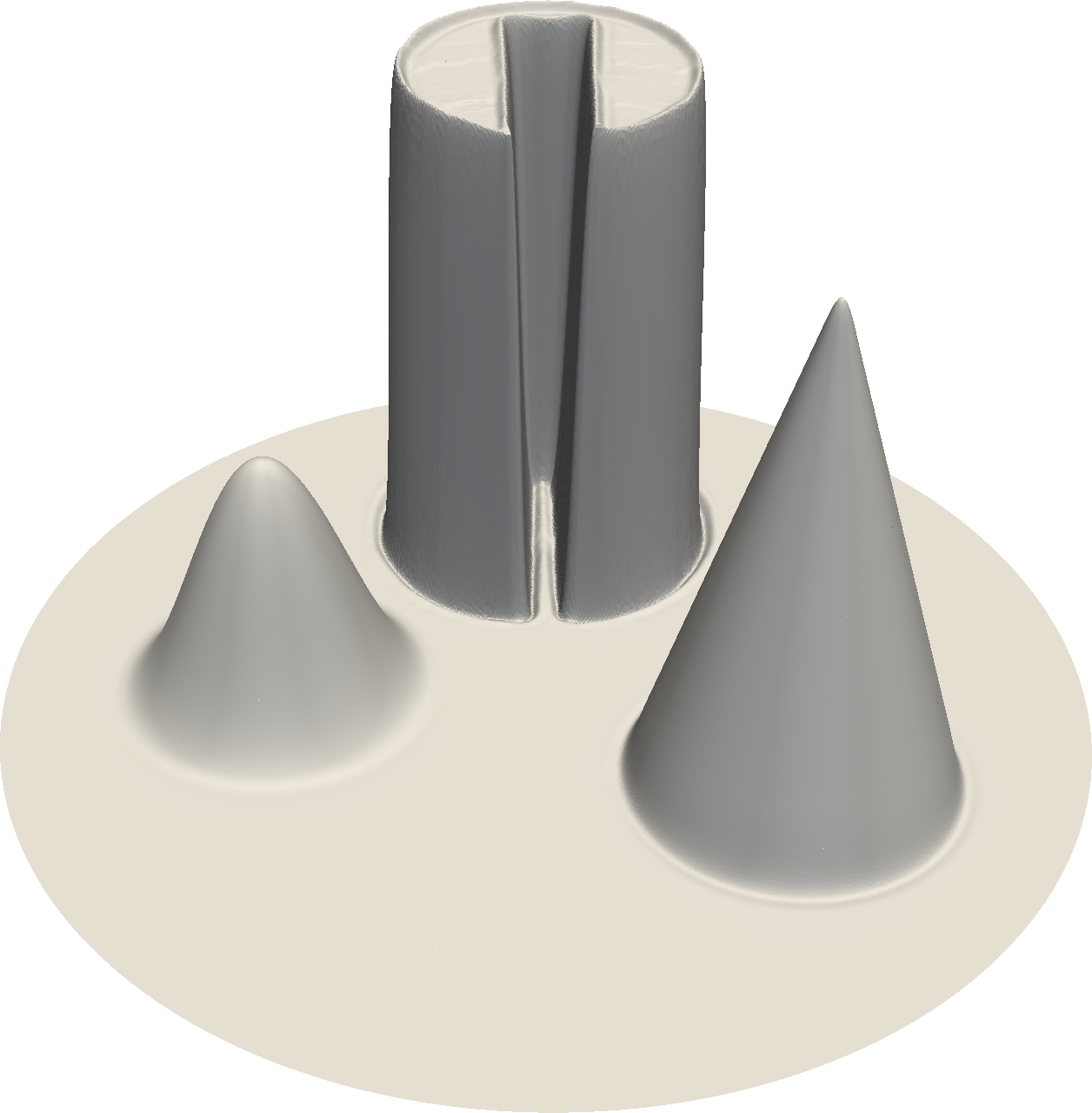}
    \\
    & 8619 & 38089 & 162381 & 351389 \\
    \\[2pt]
    \midrule
   {\rotatebox{90}{\hspace{-0.3in} dofs \hspace{0.35in} $\polP_2$}}
    &
      \hspace{0.1in}
    \includegraphics[width=0.22\textwidth]{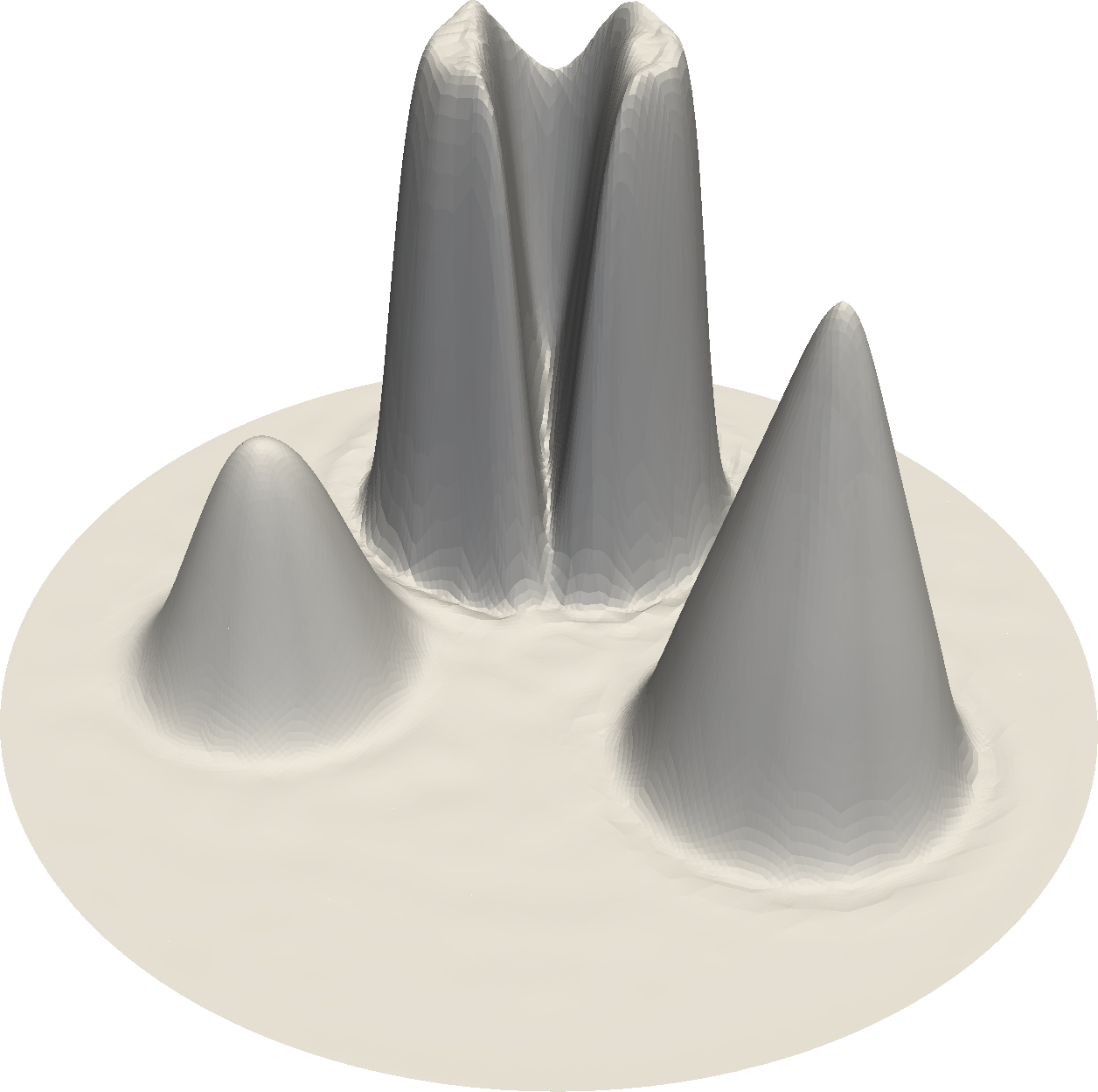}
    &
    \includegraphics[width=0.22\textwidth]{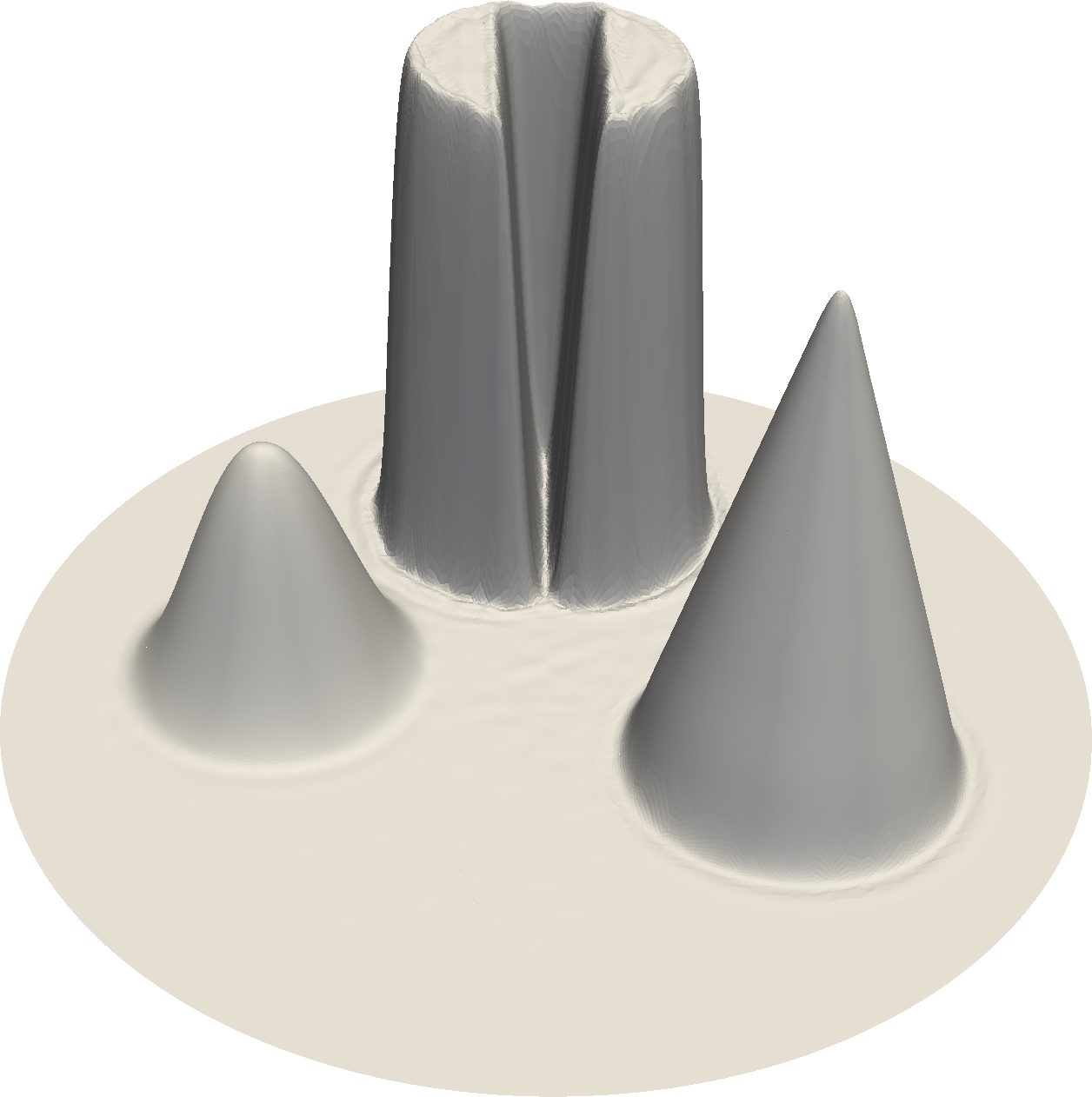}
    &
    \includegraphics[width=0.22\textwidth]{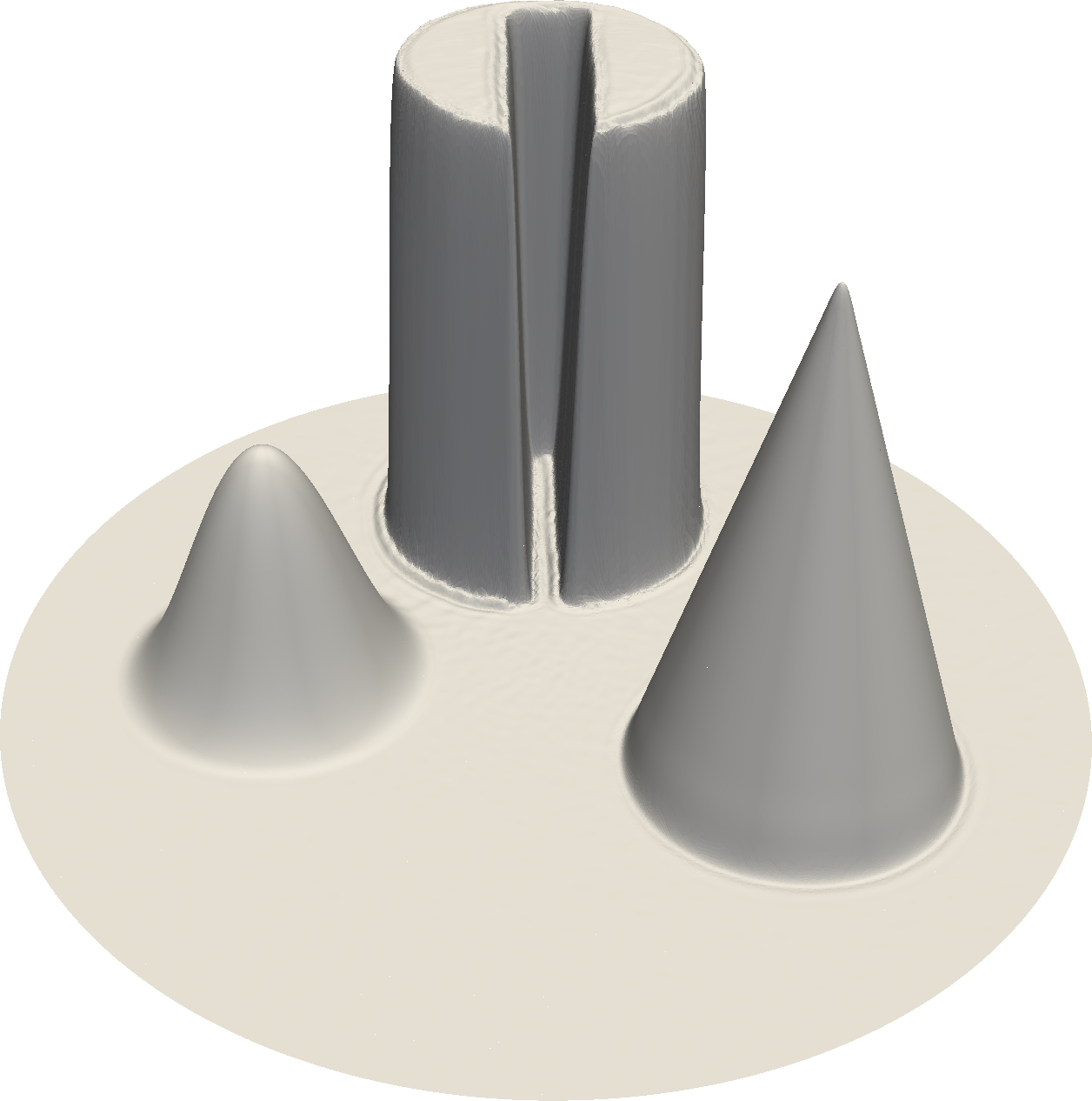}
    &
    \includegraphics[width=0.22\textwidth]{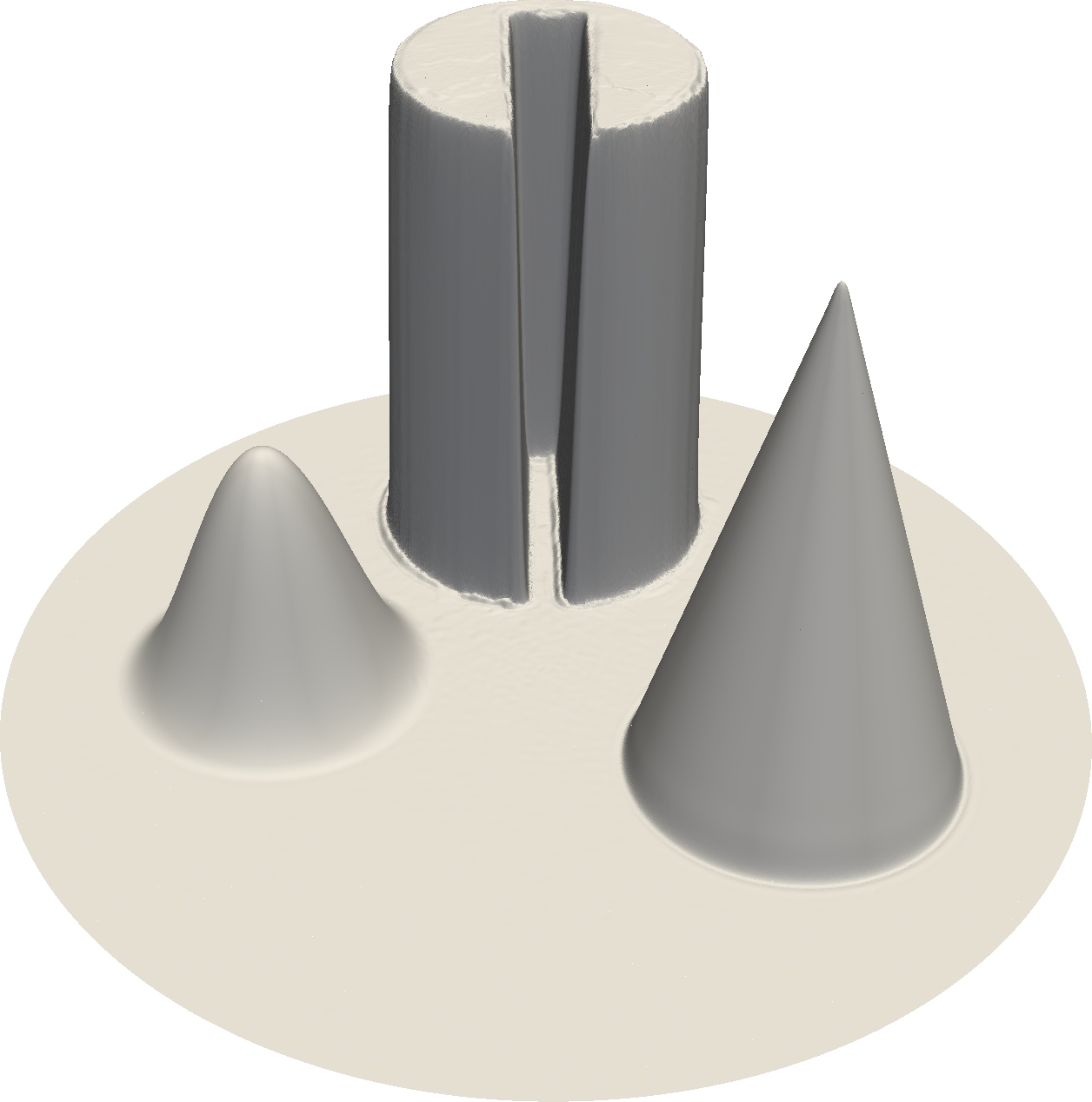}
    \\
    & 8045 & 34181 & 151739 & 317941 \\
    \\[2pt]
    \midrule
   {\rotatebox{90}{\hspace{-0.3in} dofs \hspace{0.35in} $\polP_3$}}
    &
      \hspace{0.1in}
    \includegraphics[width=0.22\textwidth]{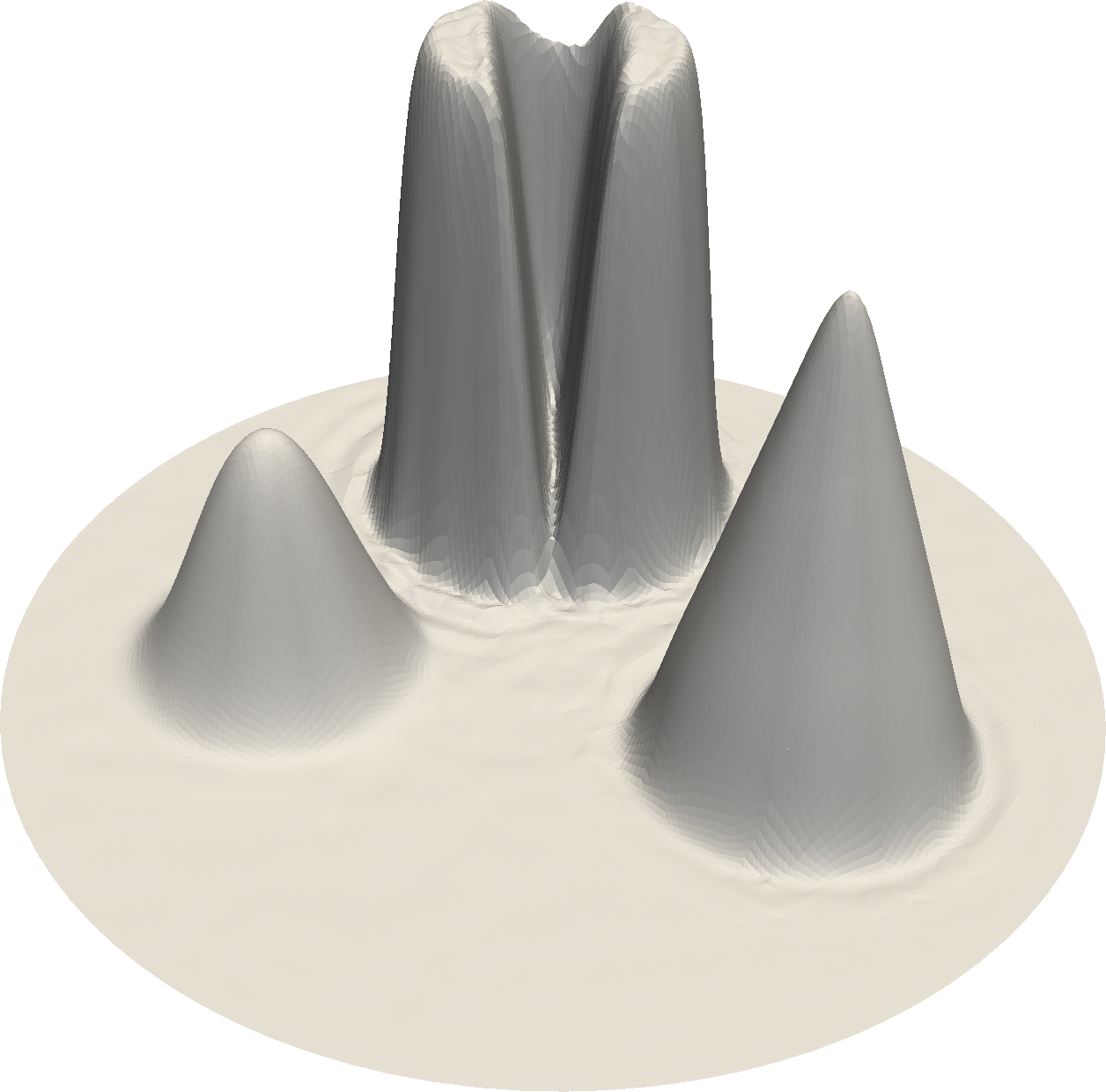}
    &
    \includegraphics[width=0.22\textwidth]{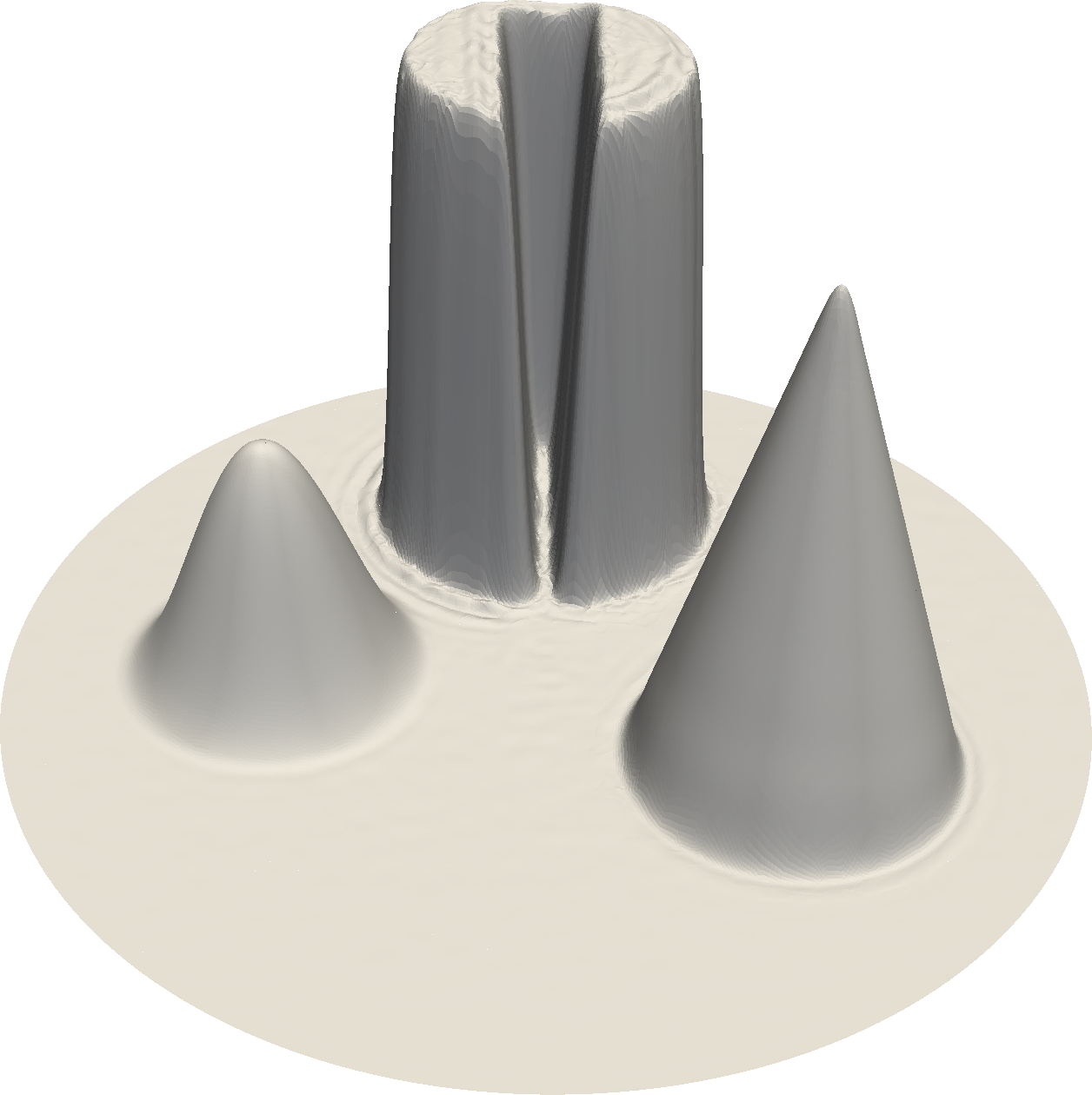}
    &
    \includegraphics[width=0.22\textwidth]{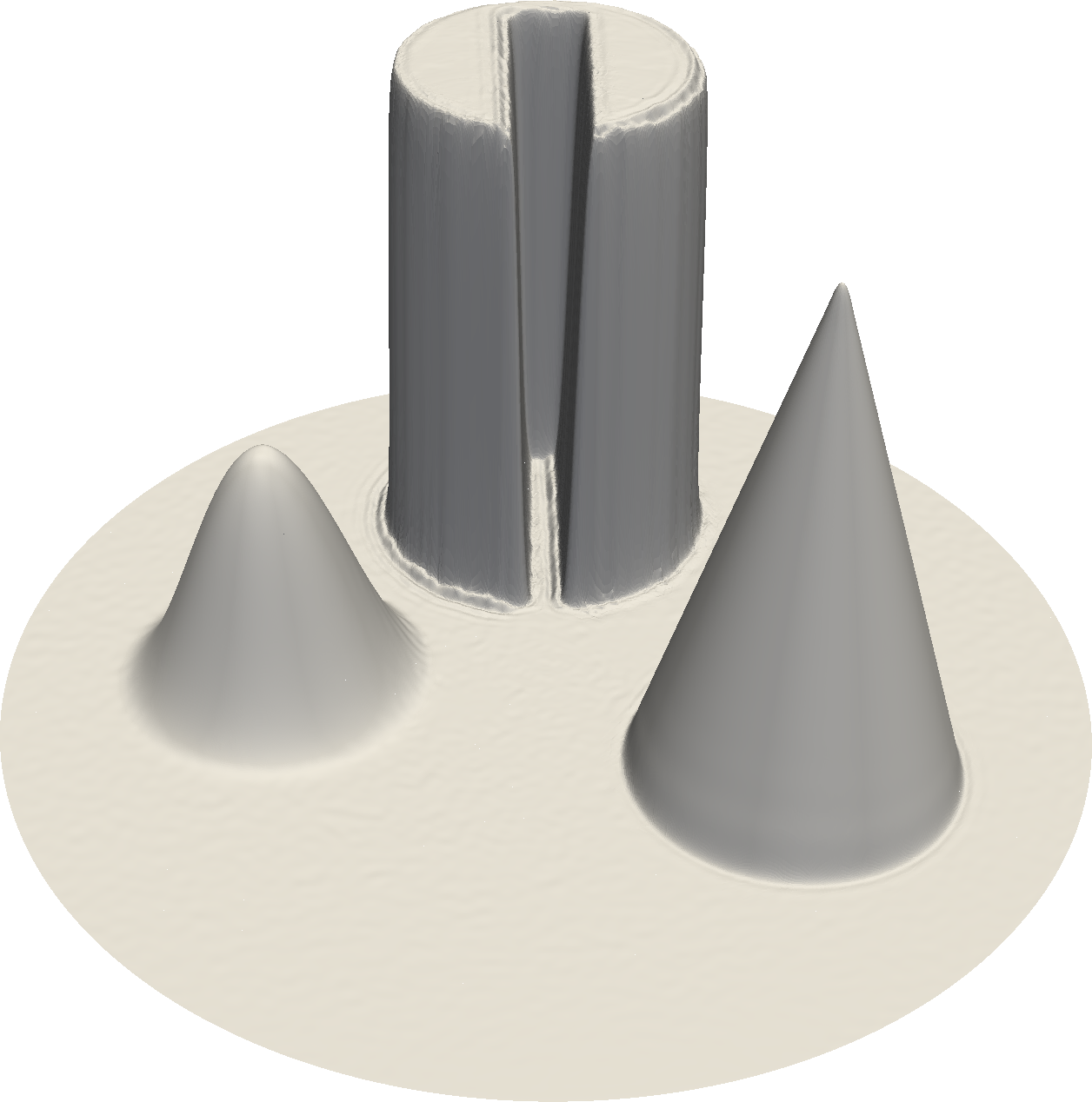}
    &
    \includegraphics[width=0.22\textwidth]{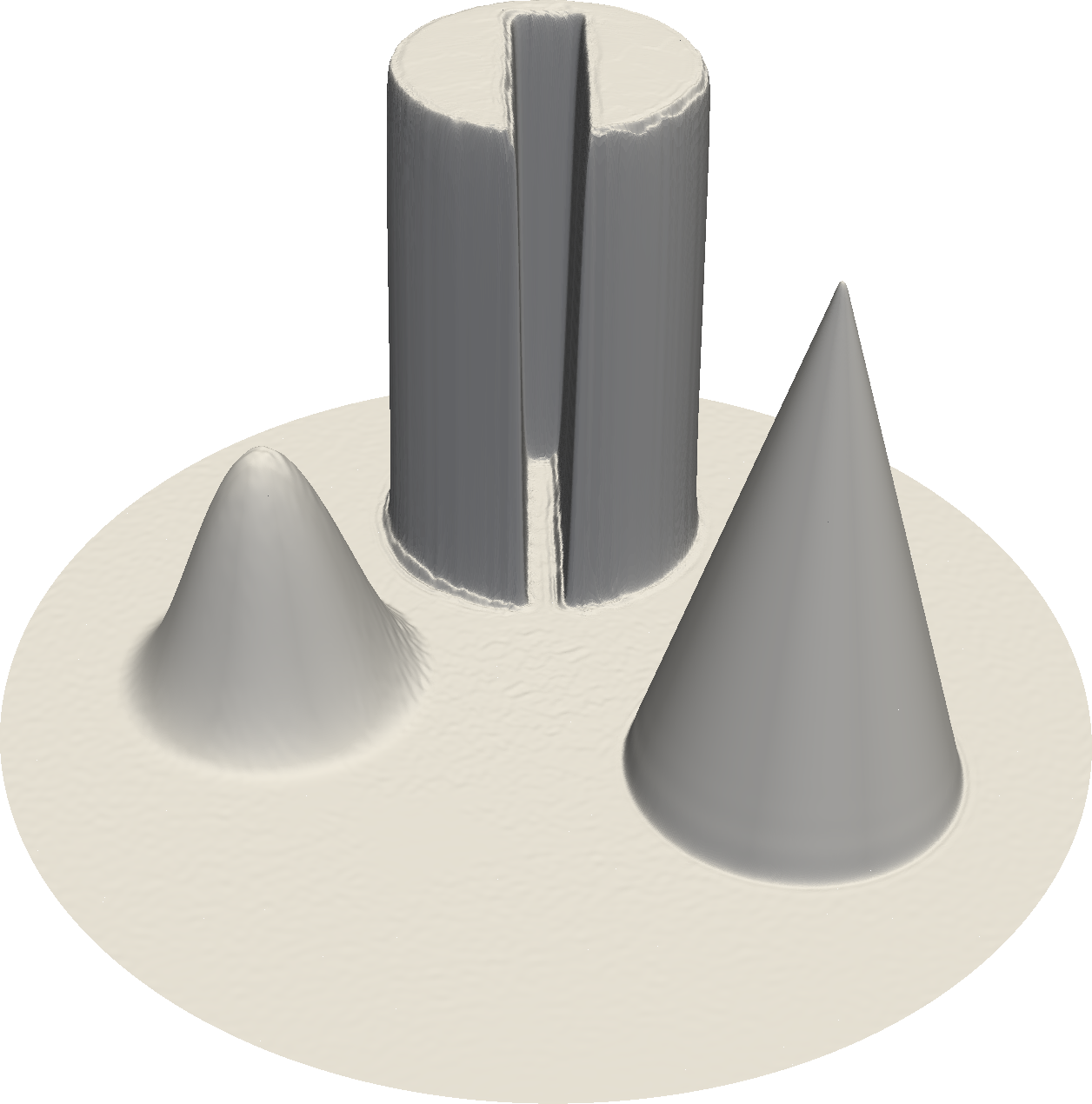}
    \\
    & 8137 & 35539 & 157885 & 340951 \\
    \\[2pt]
    \midrule
   {\rotatebox{90}{\hspace{-0.3in} dofs \hspace{0.35in} $\polP_4$}}
    &
      \hspace{0.1in}
    \includegraphics[width=0.22\textwidth]{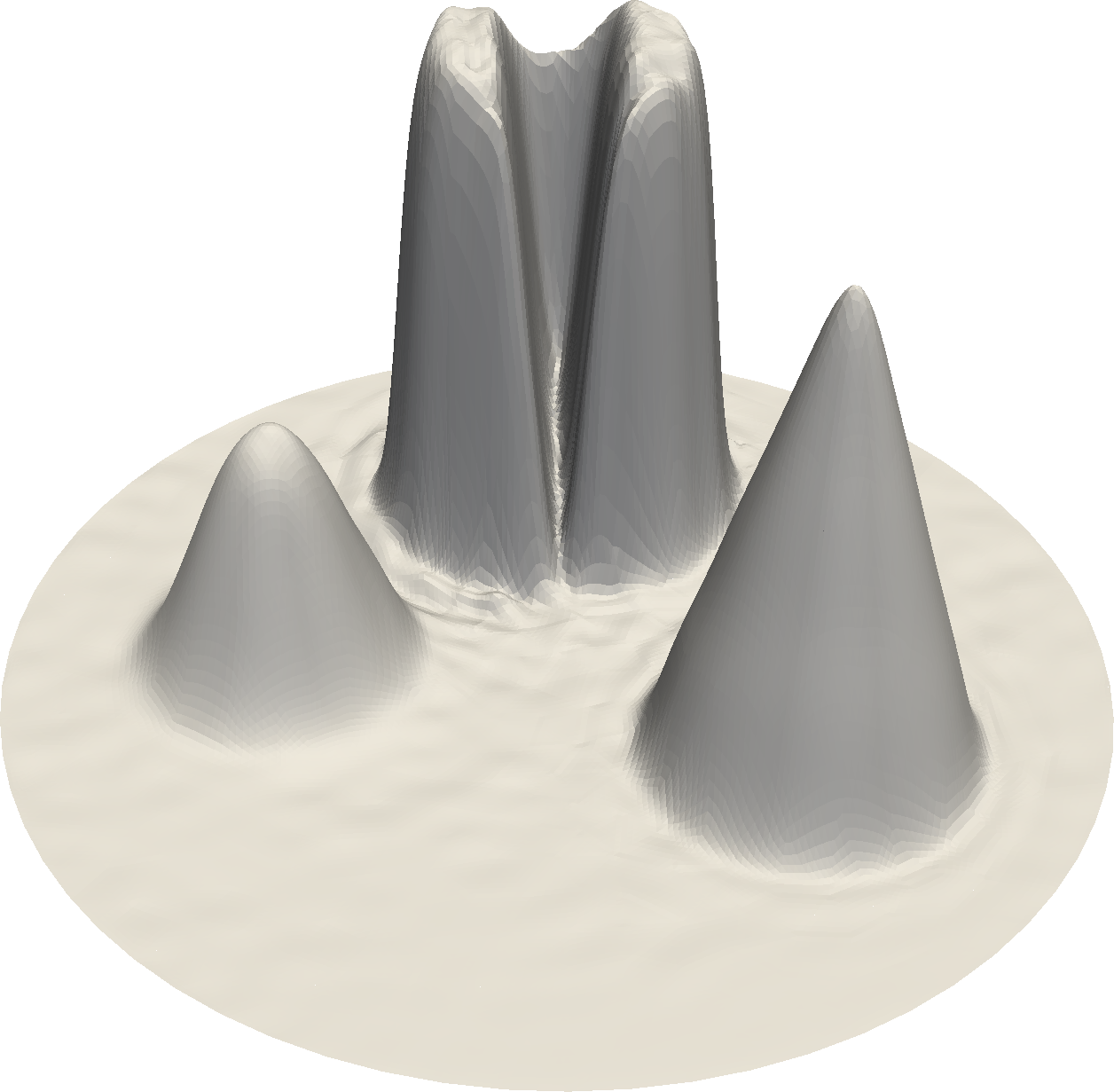}
    &
    \includegraphics[width=0.22\textwidth]{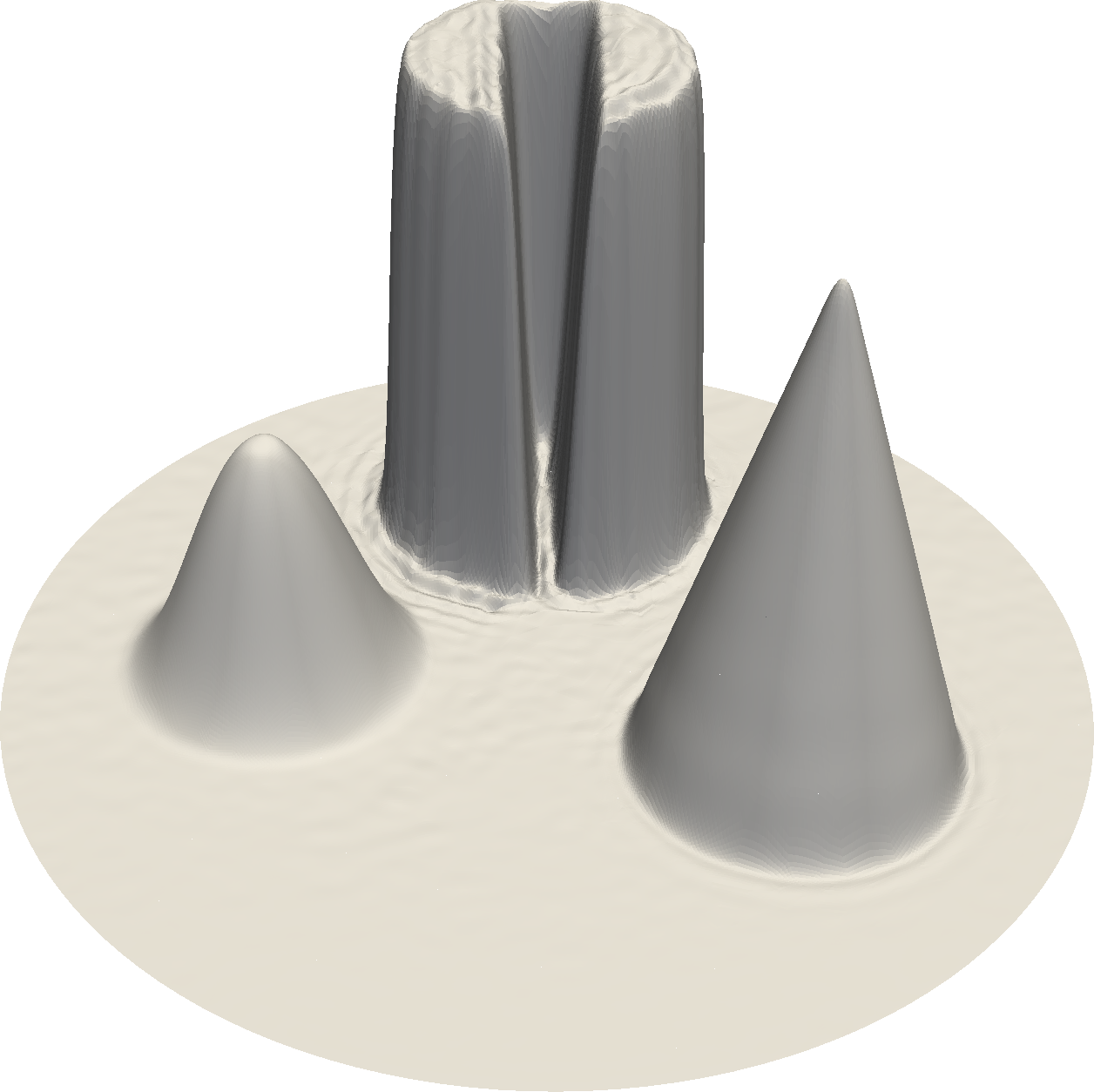}
    &
    \includegraphics[width=0.22\textwidth]{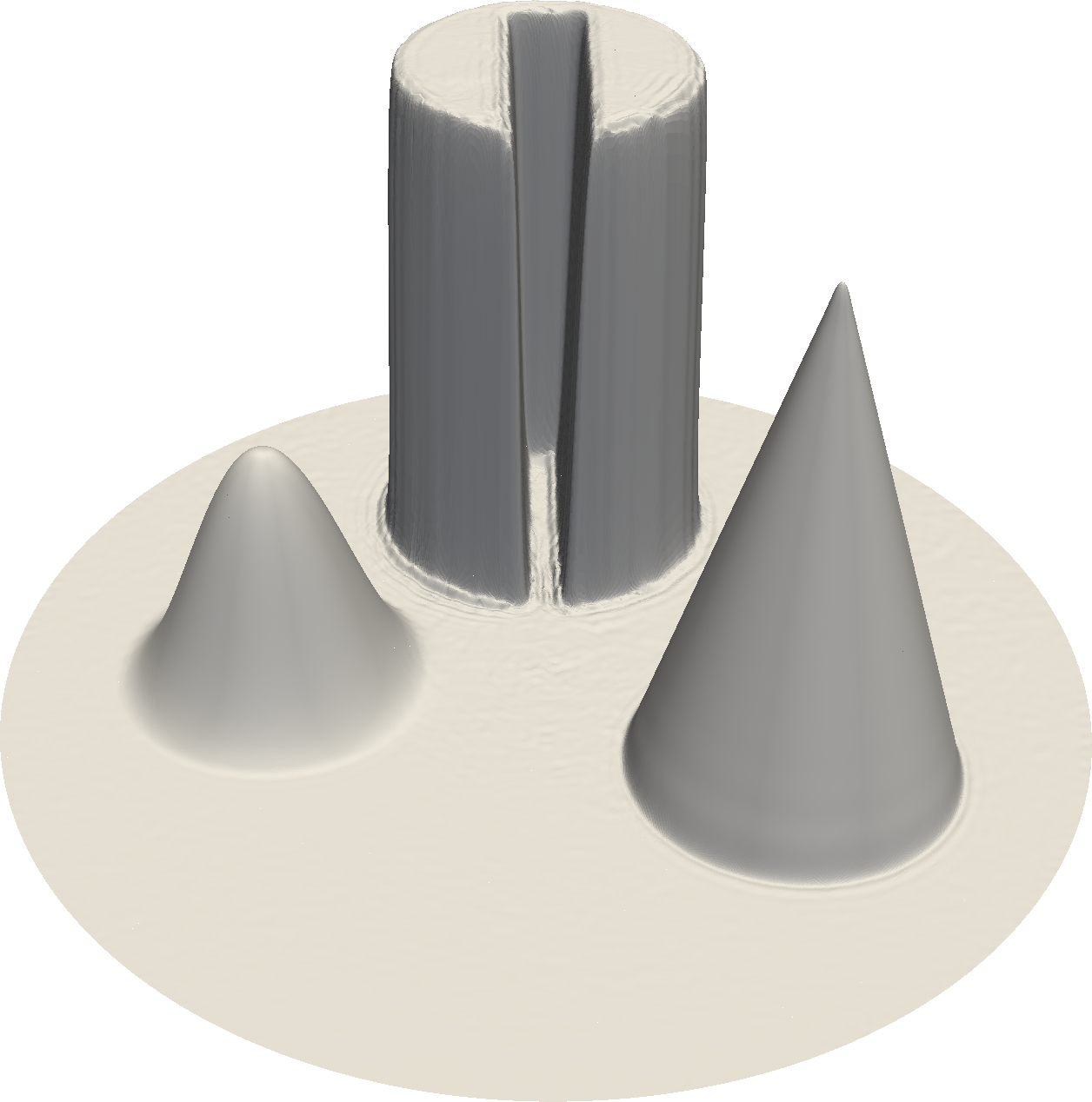}
    &
    \includegraphics[width=0.22\textwidth]{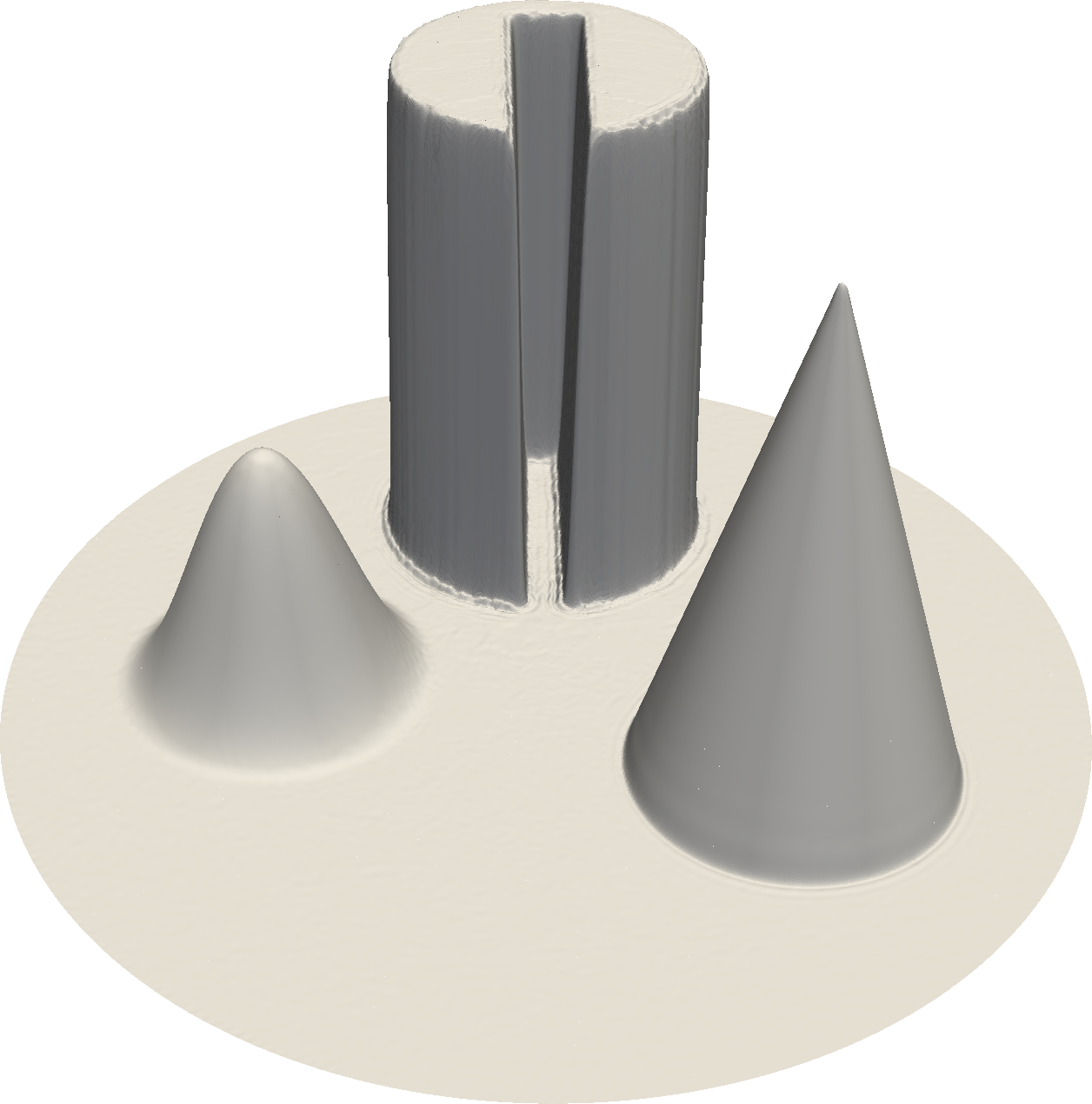}
    \\
    & 7625 & 31897 & 136137 & 280401 \\
    \\[2pt]
  \end{tabular}

  \caption{Three body rotation: limited solutions using polynomial degrees $\polP_1$--$\polP_4$ on different unstructured meshes.}
  \label{fig:transport}
\end{figure}

\subsection{Burgers equation}
In this and next sections, we apply the above algorithm to solve nonlinear problems. We consider the following Burger's equation in two space dimensions 
$\Omega = (-0.25, 1.75)^2$:
\begin{equation}\label{eq:burger}
  \partial_t u + \DIV \bef(u) =0, 
\end{equation}
where $\bef(u) = \frac12(u^2,u^2)$ and 
\[
  u_0(\bx) = 
  \begin{cases}
    1, & \text{if } |x_1-\frac12|\le 1 \text{ and }  |x_2-\frac12| \le 1,\\
    -0.75, & \text{otherwise}.
  \end{cases}
\]
The exact solution for this problem is described in \citep[Sec.~6.1]{Guermond_Popov_2017}. This is a difficult task for numerical methods because the solution contains a sonic point, \ie there exists $u$ such that $\bef'(u) = 0$. The lack of viscosity due to the sonic point can lead to convergence to a non-entropy solution, see \eg \citep[Sec.~6]{Guermond_Popov_2017}. We solve the problem up to $t=0.75$ with CFL number 0.5 on $\polP_1$, $\polP_2$, $\polP_3$ and $\polP_4$ polynomial spaces.

We solve the problem using the proposed limiting algorithm. For the high-order baseline, we compare two choices. In the first case, we use the residual-viscosity method \eqref{eq:HO}. In the second case, we set $C_{\rm R}=0$ in \eqref{eq:HO}, so that the residual-viscosity term is removed and the high-order method is stabilized only by the CIP term. We refer to the latter method as CIP. 
The results from numerical simulations are collected in Figure~\ref{fig:burgers_comparison}. The first row depicts graph-Poisson limiting solutions for all polynomial spaces where the high-order method is stabilized using RV and CIP stabilzations. The second row of the figure present the limited solutions where the high-order method utilizes only using CIP stabilization. One can see the choice of high-order method is important for nonlinear problems.  

Table~\ref{tab:burgers_l1_convergence} reports the convergence result in the relative $L^1$-norm for all polynomial spaces. We observe that the convergence rate is close to 1 for RV stabilization, but it detoriates for CIP stabilazation only for higher polynomial spaces.

\begin{figure}[htbp]
  \centering
  \setlength{\tabcolsep}{2pt}
  \renewcommand{\arraystretch}{1.0}

  \newcommand{\rowlabel}[1]{%
    \raisebox{0pt}[\height][\depth]{%
      \makebox[0.035\textwidth][c]{\rotatebox{90}{#1}}%
    }%
  }

  \begin{tabular}{ccccc}
    \toprule
    & $\polP_1$ & $\polP_2$ & $\polP_3$ & $\polP_4$ \\
    \midrule

    \rotatebox{90}{\small \hspace{-0.3in} dofs \hspace{0.5in} RV}
    &
    \includegraphics[width=0.22\textwidth]{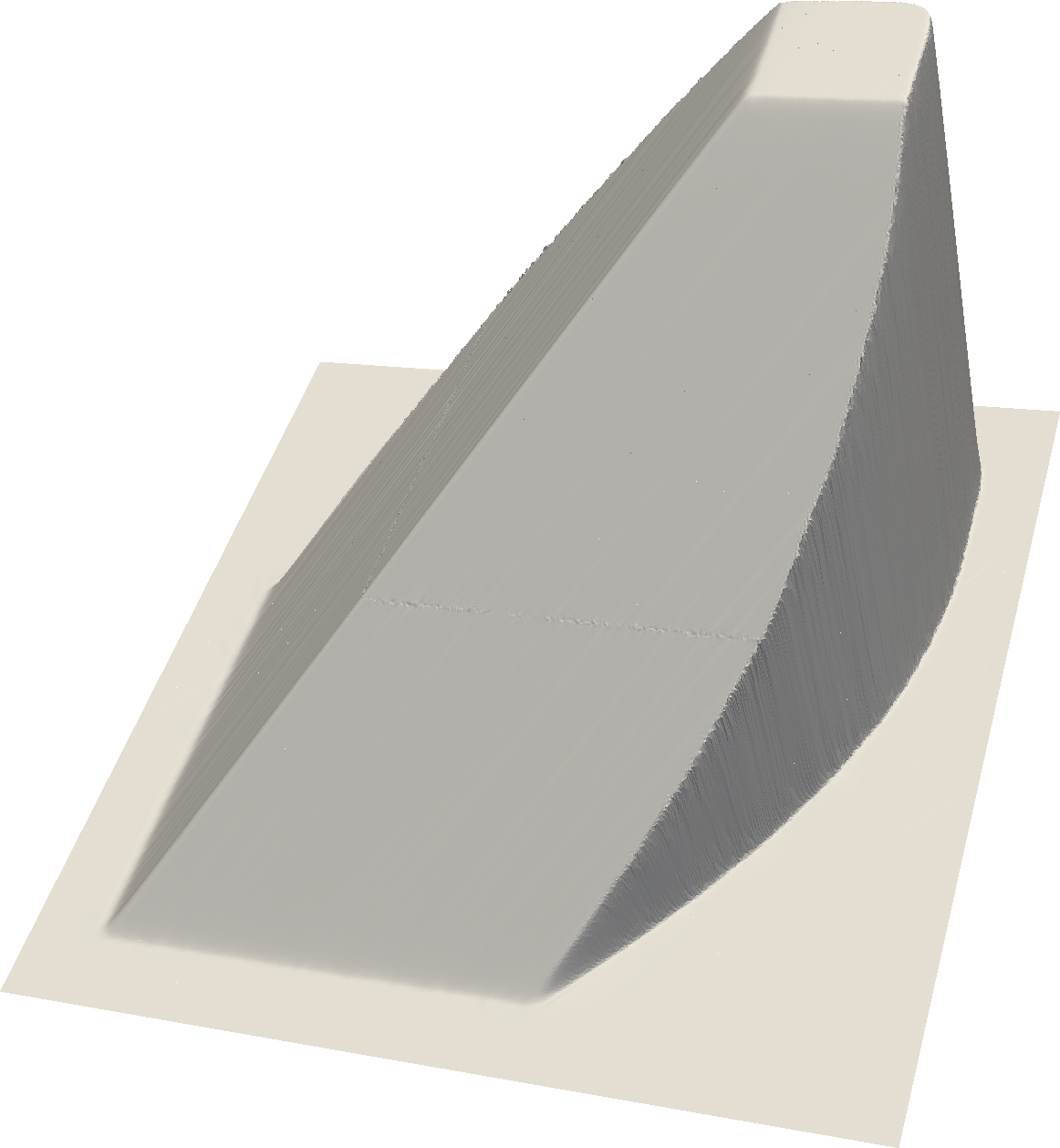}
    &
    \includegraphics[width=0.22\textwidth]{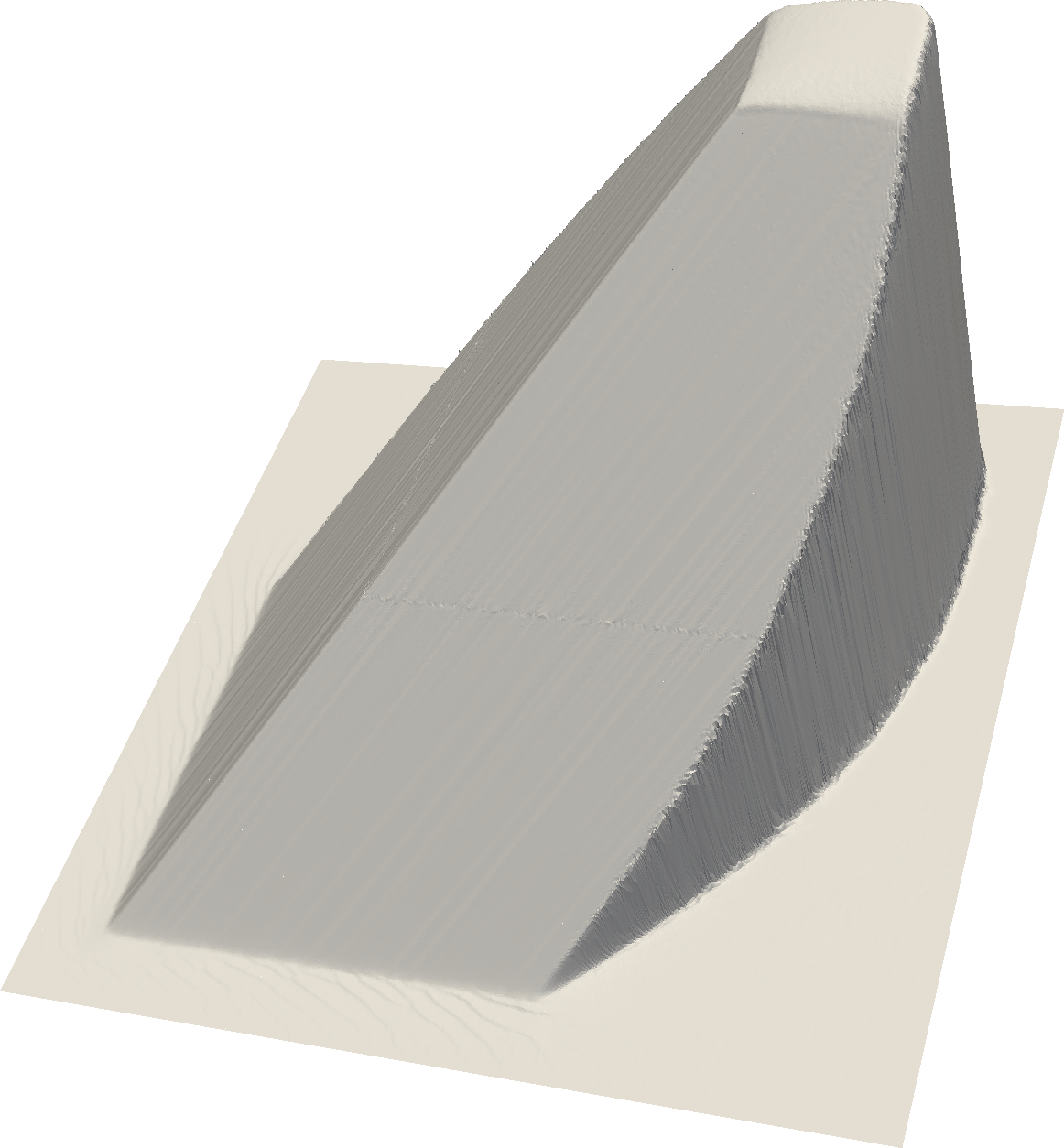}
    &
    \includegraphics[width=0.22\textwidth]{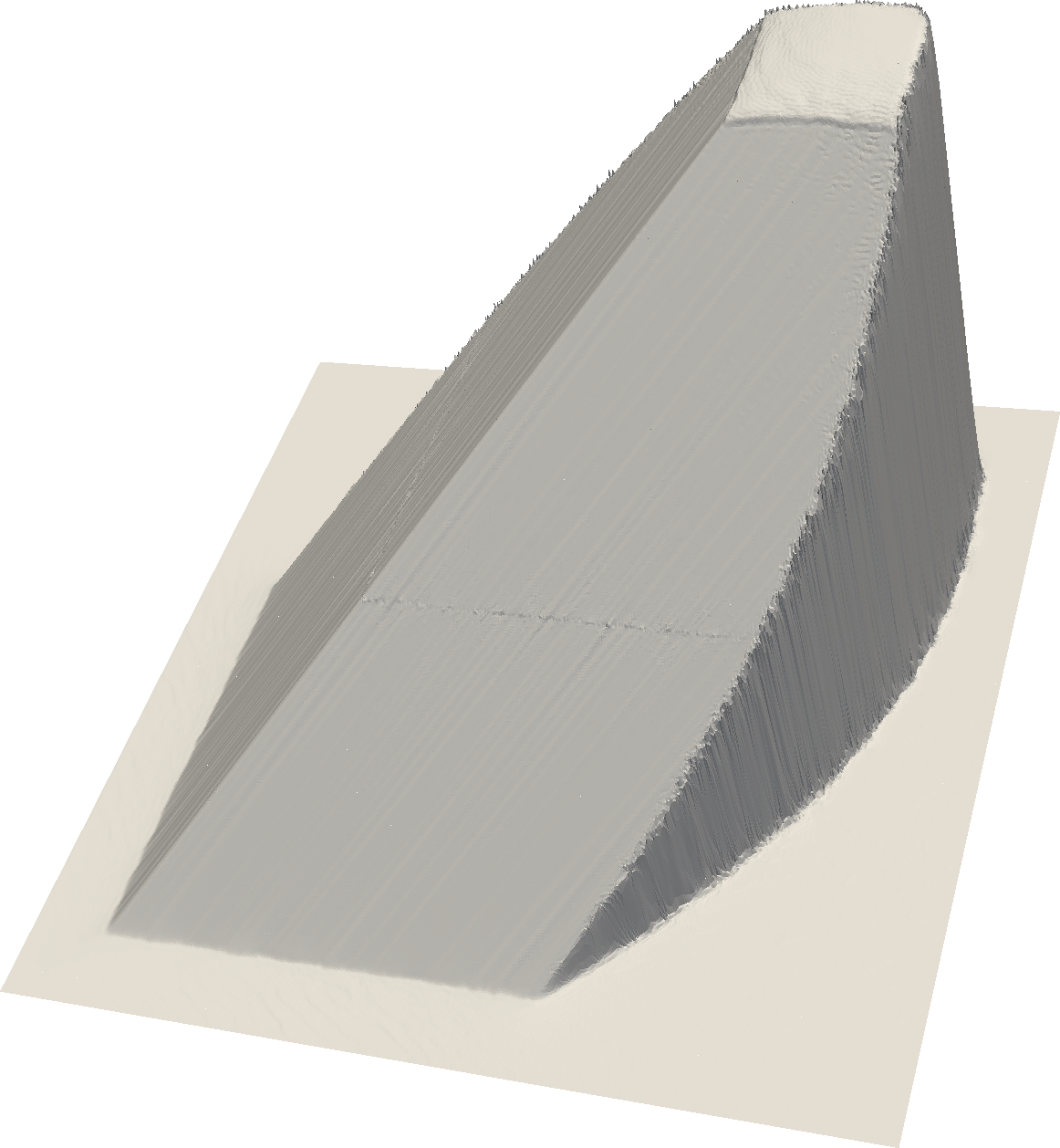}
    &
      \includegraphics[width=0.22\textwidth]{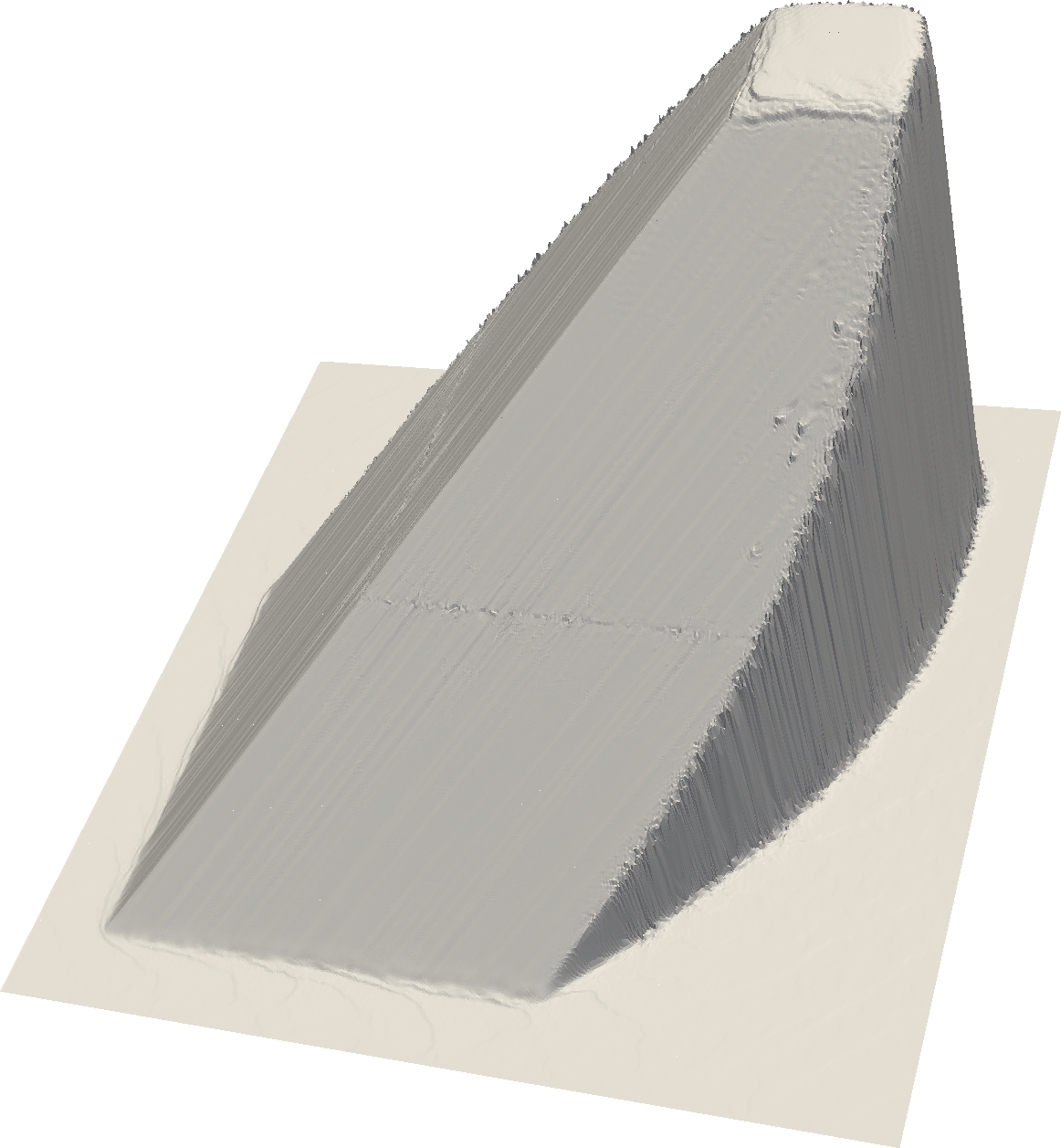}
    \\
    & 361215 & 287089 & 290464 & 235961 \\
    \\[2pt]

    \rotatebox{90}{\small \hspace{-0.3in} dofs \hspace{0.5in} \small CIP}
    &
    \includegraphics[width=0.22\textwidth]{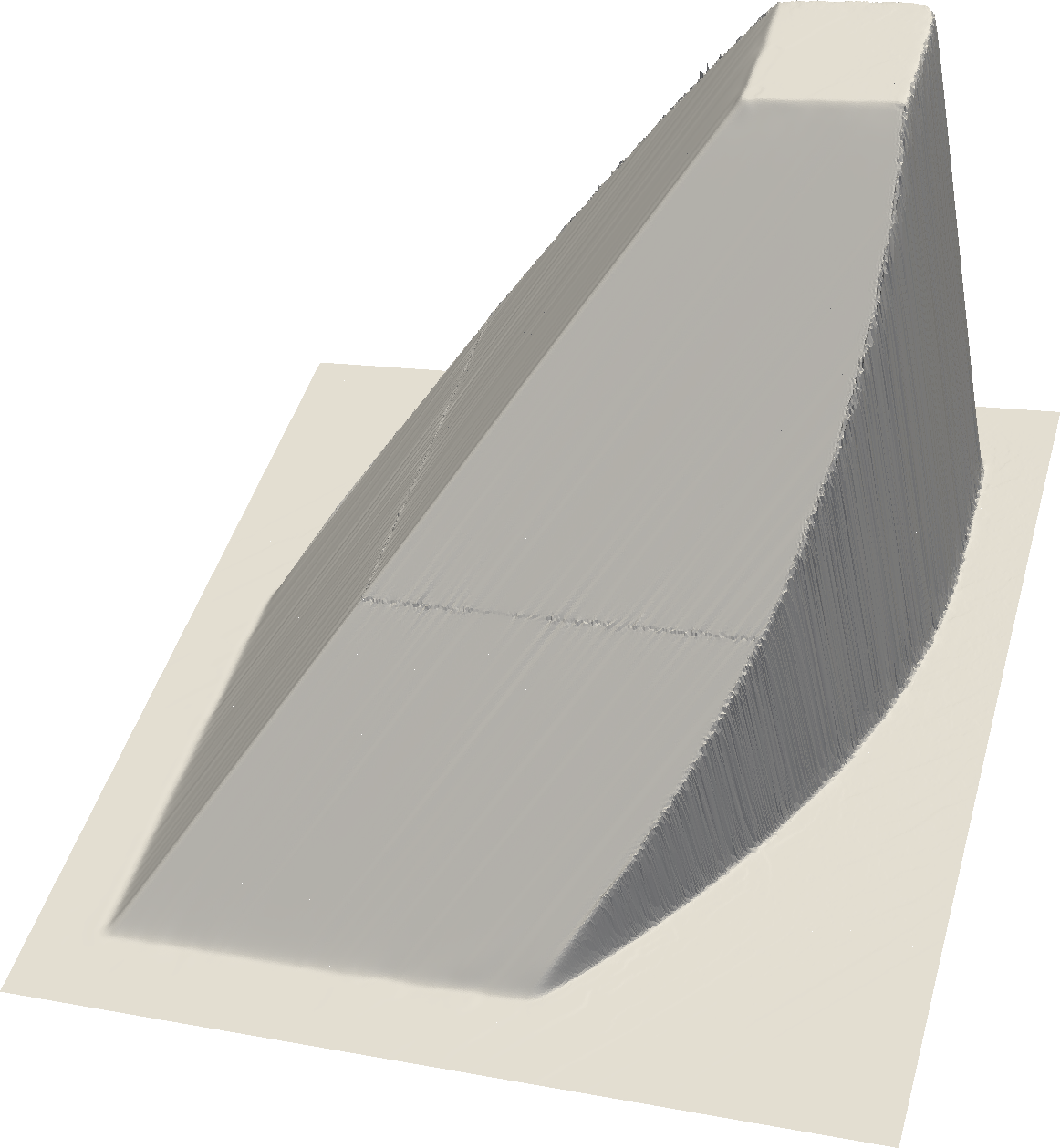}
    &
    \includegraphics[width=0.22\textwidth]{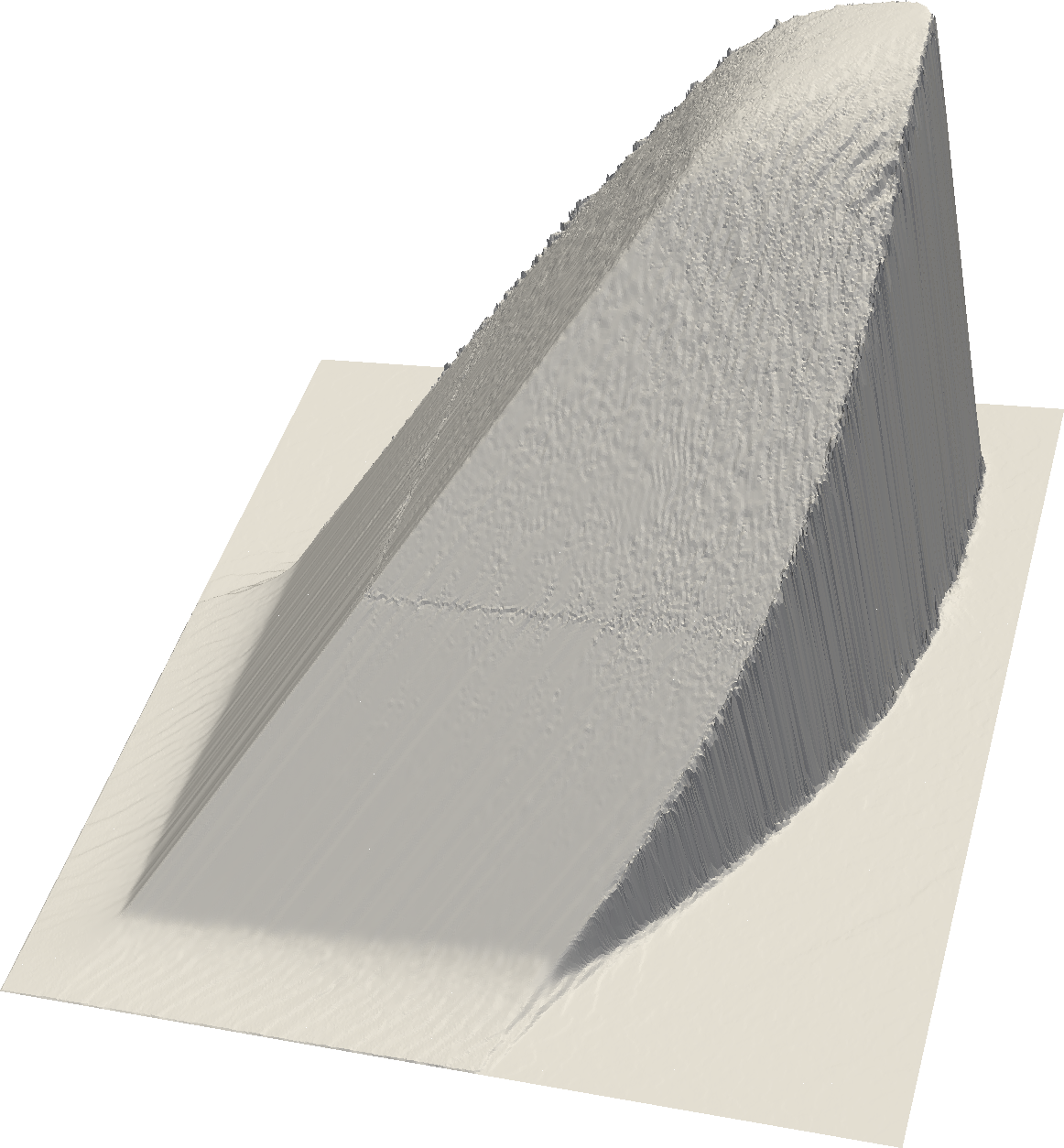}
    &
    \includegraphics[width=0.22\textwidth]{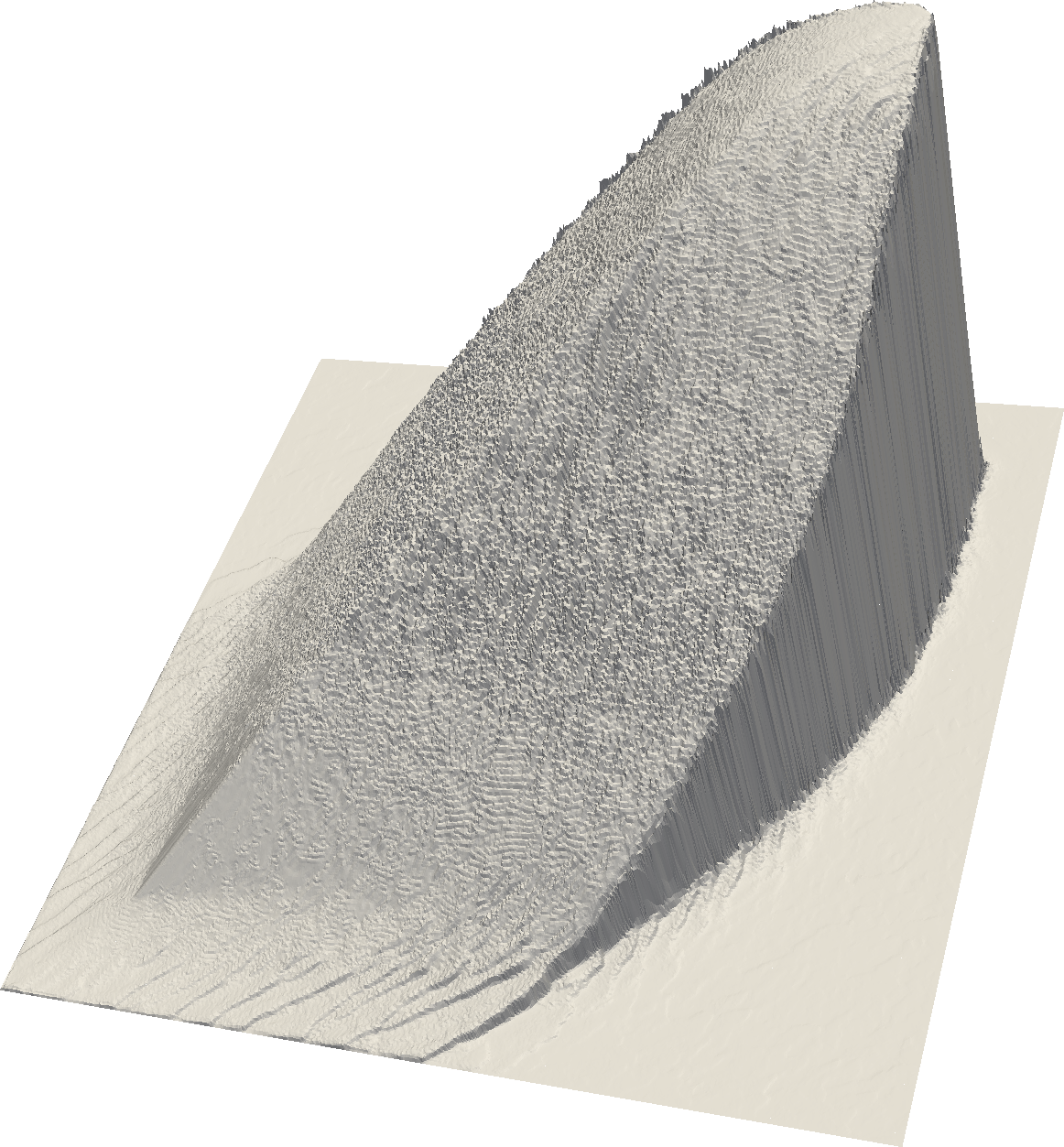}
    &
    \includegraphics[width=0.22\textwidth]{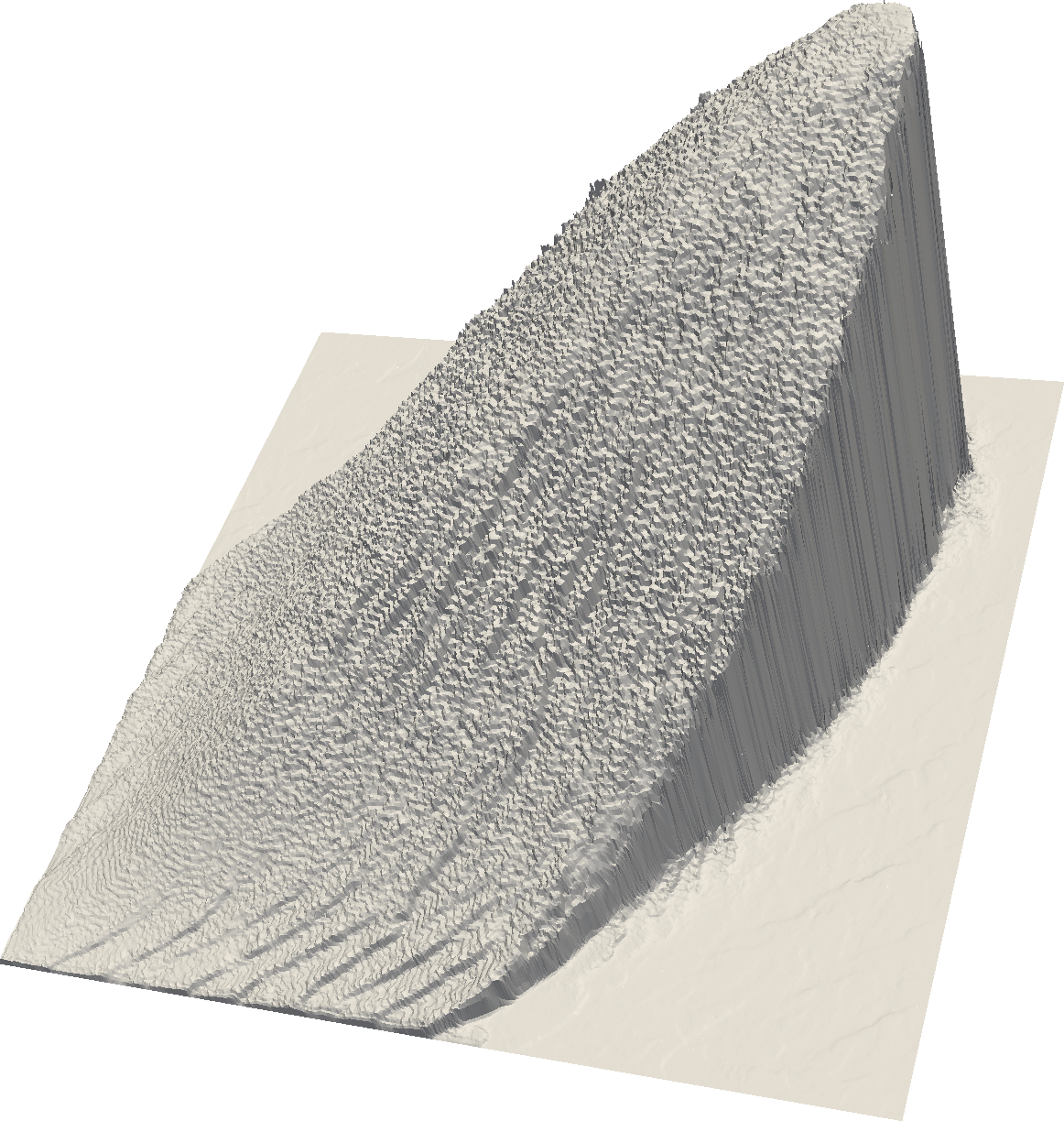}
    \\
    & 361215 & 287089 & 290464 & 235961 \\
    \\
    \bottomrule
  \end{tabular}

  \caption{Burgers' equation: comparison of limited solution using RV and CIP only using polynomial degrees $\polP_1$--$\polP_4$.}
  \label{fig:burgers_comparison}
\end{figure}

\begin{table}[htbp]
  \centering
  \caption{Convergence history for the Burgers' problem. Only the $L^1$-error is reported.}
  \label{tab:burgers_l1_convergence}
  \vspace{0.05in}
  \resizebox{\textwidth}{!}{%
    \begin{tabular}{r|cc|cc||r|cc|cc}
      \hline
      \multicolumn{5}{c||}{$\polP_1$}
      &
        \multicolumn{5}{c}{$\polP_2$}
      \\
      \hline
      \multirow{2}{*}{dofs}
      & \multicolumn{2}{c|}{RV + limiter}
      & \multicolumn{2}{c||}{CIP + limiter}
      &
        \multirow{2}{*}{dofs}
      & \multicolumn{2}{c|}{RV + limiter}
      & \multicolumn{2}{c}{CIP + limiter}
      \\
      \cline{2-5}\cline{7-10}
      & $L^1$ & $p$ & $L^1$ & $p$
      & & $L^1$ & $p$ & $L^1$ & $p$
      \\
      \hline
      740    & $1.76\times 10^{-1}$ & 0.00 & $9.83\times 10^{-2}$ & 0.00
      & 2865   & $9.13\times 10^{-2}$ & 0.00 & $6.33\times 10^{-2}$ & 0.00 \\
      1490   & $1.28\times 10^{-1}$ & 0.93 & $6.94\times 10^{-2}$ & 1.00
      & 5821   & $6.68\times 10^{-2}$ & 0.88 & $5.27\times 10^{-2}$ & 0.52 \\
      3413   & $8.55\times 10^{-2}$ & 0.97 & $4.64\times 10^{-2}$ & 0.97
      & 13445  & $4.55\times 10^{-2}$ & 0.92 & $4.11\times 10^{-2}$ & 0.60 \\
      6504   & $6.33\times 10^{-2}$ & 0.93 & $3.50\times 10^{-2}$ & 0.88
      & 25729  & $3.36\times 10^{-2}$ & 0.94 & $3.02\times 10^{-2}$ & 0.95 \\
      14909  & $4.20\times 10^{-2}$ & 0.99 & $2.40\times 10^{-2}$ & 0.91
      & 59205  & $2.25\times 10^{-2}$ & 0.96 & $3.01\times 10^{-2}$ & 0.01 \\
      32488  & $2.87\times 10^{-2}$ & 0.98 & $1.59\times 10^{-2}$ & 1.06
      & 129309 & $1.54\times 10^{-2}$ & 0.96 & $2.59\times 10^{-2}$ & 0.38 \\
      72013  & $2.01\times 10^{-2}$ & 0.90 & $1.10\times 10^{-2}$ & 0.92
      & 287089 & $1.11\times 10^{-2}$ & 0.82 & $2.22\times 10^{-2}$ & 0.38 \\
      163025 & $1.33\times 10^{-2}$ & 1.02 & $7.33\times 10^{-3}$ & 0.99
      & 650661 & $7.79\times 10^{-3}$ & 0.87 & $2.01\times 10^{-2}$    & 0.25   \\
      361215 & $9.02\times 10^{-3}$ & 0.97 & $4.94\times 10^{-3}$ & 0.99
      & 1442701     & $5.67\times 10^{-3}$       & 0.80  & $1.72\times 10^{-2}$    & 0.39   \\
      \hline
    \end{tabular}%
  }

  \vspace{0.15in}

  \resizebox{\textwidth}{!}{%
    \begin{tabular}{r|cc|cc||r|cc|cc}
      \hline
      \multicolumn{5}{c||}{$\polP_3$}
      &
        \multicolumn{5}{c}{$\polP_4$}
      \\
      \hline
      \multirow{2}{*}{dofs}
      & \multicolumn{2}{c|}{RV + limiter}
      & \multicolumn{2}{c||}{CIP + limiter}
      &
        \multirow{2}{*}{dofs}
      & \multicolumn{2}{c|}{RV + limiter}
      & \multicolumn{2}{c}{CIP + limiter}
      \\
      \cline{2-5}\cline{7-10}
      & $L^1$ & $p$ & $L^1$ & $p$
      & & $L^1$ & $p$ & $L^1$ & $p$
      \\
      \hline
      1357   & $1.20\times 10^{-1}$ & 0.00 & $1.20\times 10^{-1}$ & 0.00
      & 2385   & $1.07\times 10^{-1}$ & 0.00 & $1.56\times 10^{-1}$ & 0.00 \\
      2485   & $1.03\times 10^{-1}$ & 0.50 & $9.12\times 10^{-2}$ & 0.90
      & 4377   & $9.12\times 10^{-2}$ & 0.52 & $1.33\times 10^{-1}$ & 0.53 \\
      6376   & $6.71\times 10^{-2}$ & 0.91 & $5.19\times 10^{-2}$ & 1.20
      & 11273  & $5.73\times 10^{-2}$ & 0.98 & $9.15\times 10^{-2}$ & 0.79 \\
      12994  & $4.65\times 10^{-2}$ & 1.03 & $4.59\times 10^{-2}$ & 0.34
      & 23009  & $4.12\times 10^{-2}$ & 0.93 & $1.04\times 10^{-1}$ & -0.36 \\
      30097  & $3.27\times 10^{-2}$ & 0.83 & $3.64\times 10^{-2}$ & 0.55
      & 53369  & $2.85\times 10^{-2}$ & 0.88 & $1.47\times 10^{-1}$ & -0.83 \\
      57676  & $2.41\times 10^{-2}$ & 0.94 & $3.48\times 10^{-2}$ & 0.14
      & 102345 & $2.06\times 10^{-2}$ & 0.99 & $1.03\times 10^{-1}$ & 1.11 \\
      132889 & $1.64\times 10^{-2}$ & 0.92 & $3.45\times 10^{-2}$ & 0.02
      & 235961 & $1.38\times 10^{-2}$ & 0.95 & $1.35\times 10^{-1}$ & -0.66 \\
      290464 & $1.12\times 10^{-2}$ & 0.99 & $4.22\times 10^{-2}$ & -0.51
      & 515953 & $9.17\times 10^{-3}$ & 1.05 & $1.30\times 10^{-1}$  & 0.11   \\
      1462909 & $5.29\times 10^{-3}$ & 0.84 & $5.20\times 10^{-2}$ & -0.17
      & 1146433  & $6.44\times 10^{-3}$ & 0.89 &  $1.20\times 10^{-1}$ & 0.19  \\
      \hline
    \end{tabular}%
  }
\end{table}

\subsection{KPP problem}

In our final numerical validation, we consider the so-called KPP problem of Kurganov, Petrova, and Popov \citep{Kurganov_2007}. We solve \eqref{eq:burger} on the square domain
\[
  \Omega=[-2,2]\times[-2.5,1.5],
\]
with the nonlinear flux
\[
  \bef(u)=(\sin u,\cos u)^\top,
\]
and the initial condition
\[
  u_0(\bx)=
  \begin{cases}
    3.5\pi, & \text{if } x^2+y^2<1,\\
    0.25\pi, & \text{otherwise}.
  \end{cases}
\]
The solution of this problem contains composite shock waves. This benchmark is known to be challenging because some high-order methods may converge to a non-entropic weak solution. For examples of such behavior, see \citep{Kurganov_2007}, where several finite volume methods are discussed, and \citep{Striernstrom_2021}, where high-order finite difference methods are tested.

We again run the algorithm with and without the RV term. The results are shown in Figure~\ref{fig:kpp_comparison} for two mesh resolutions. In both cases, the simulations are performed with CFL number $0.5$.

The results show that the limited CIP method does not converge to the correct entropy solution for this benchmark, whereas the limited residual-viscosity method does. This demonstrates the importance of the residual-viscosity stabilization for selecting the entropy solution in the presence of composite shock waves.

\begin{figure}[htbp]
  \centering
  \setlength{\tabcolsep}{2pt}
  \renewcommand{\arraystretch}{1.0}

  \newcommand{\rowlabel}[1]{%
    \raisebox{0pt}[\height][\depth]{%
      \makebox[0.035\textwidth][c]{\rotatebox{90}{#1}}%
    }%
  }

  \begin{tabular}{ccccc}
    \toprule
    & $\polP_1$ & $\polP_2$ & $\polP_3$ & $\polP_4$ \\
    \midrule

    \rotatebox{90}{\small \hspace{-0.3in} dofs \hspace{0.5in} RV}
    &
    \includegraphics[width=0.22\textwidth]{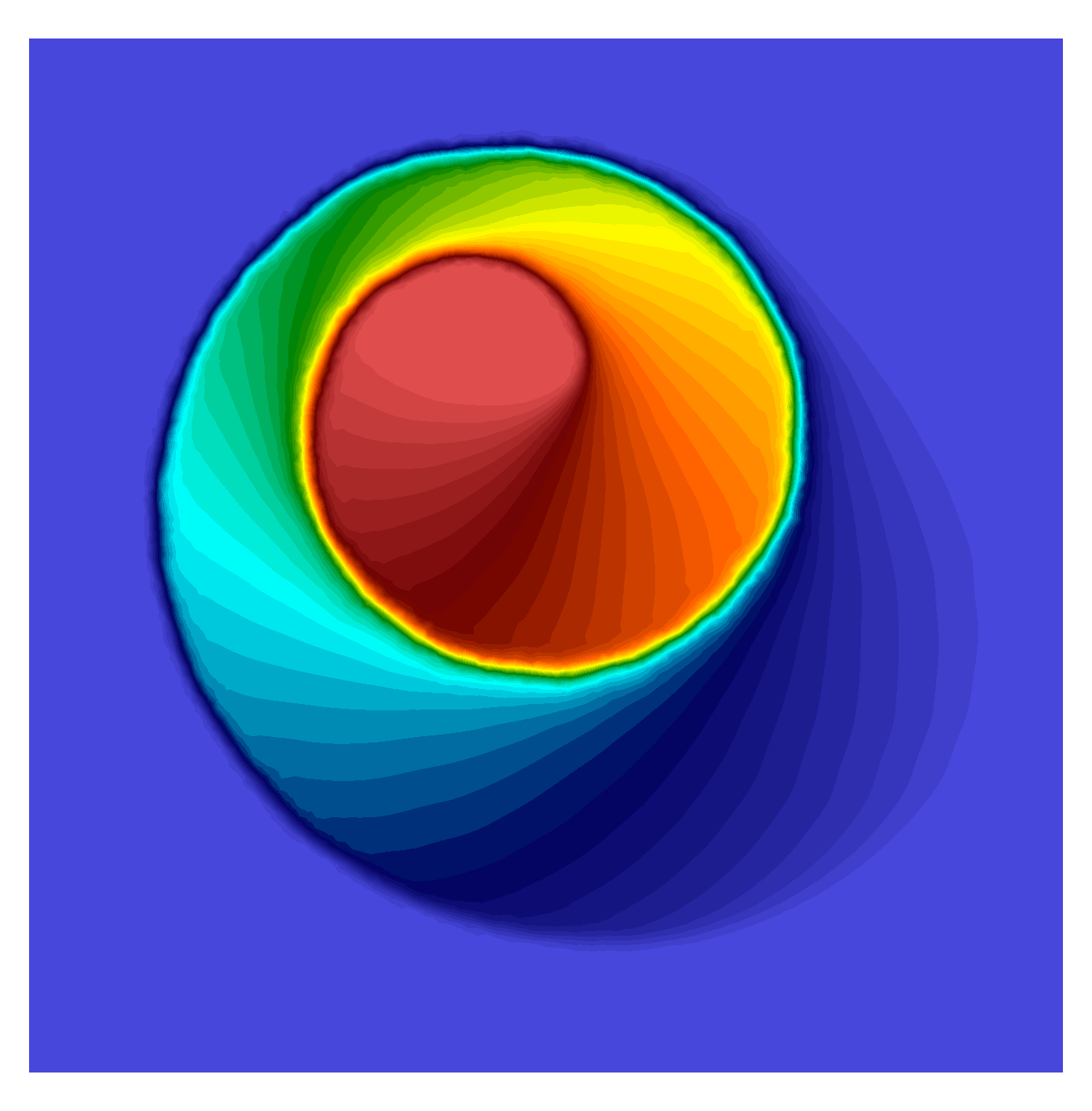}
    &
    \includegraphics[width=0.22\textwidth]{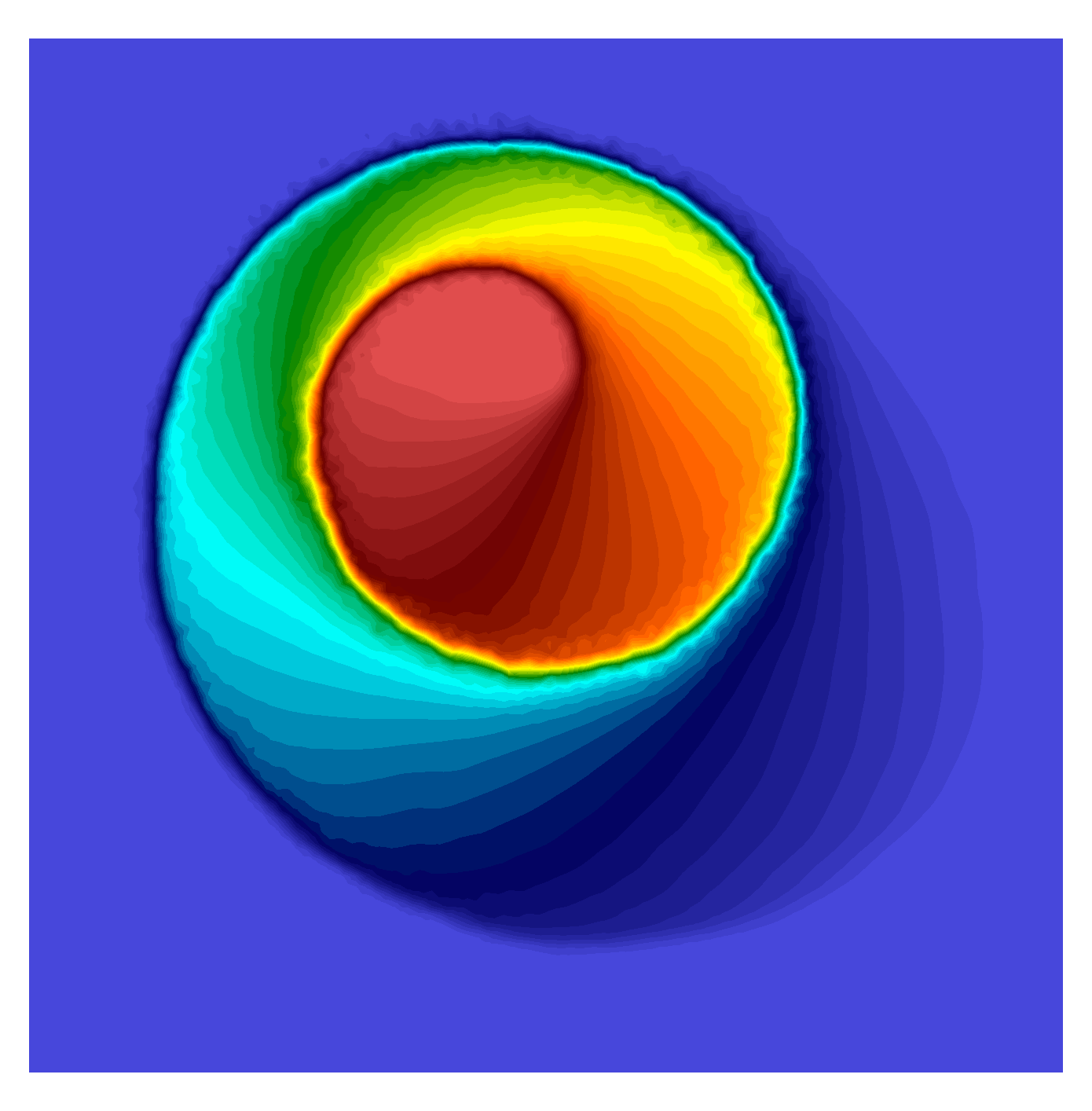}
    &
    \includegraphics[width=0.22\textwidth]{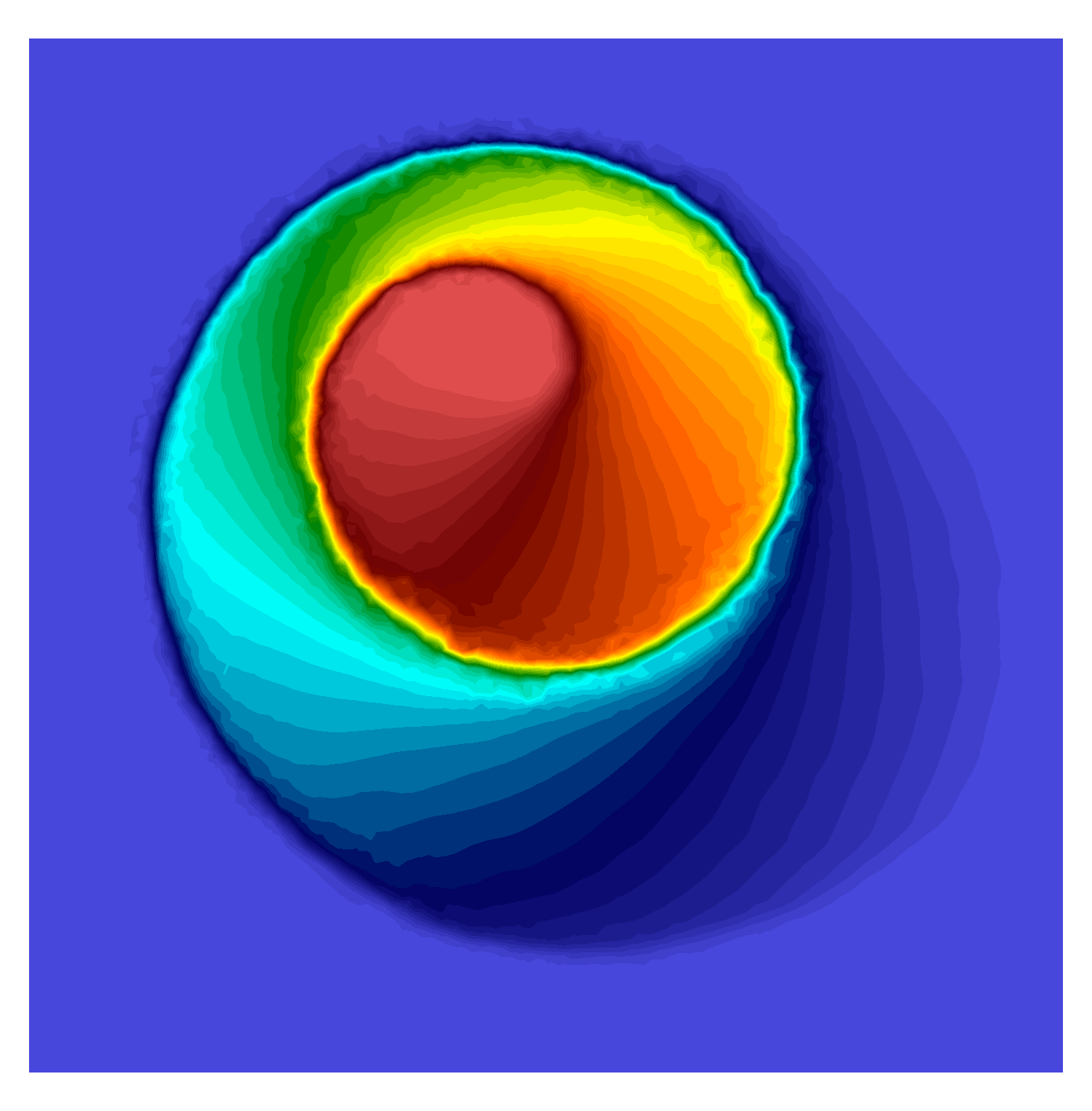}
    &
      \includegraphics[width=0.22\textwidth]{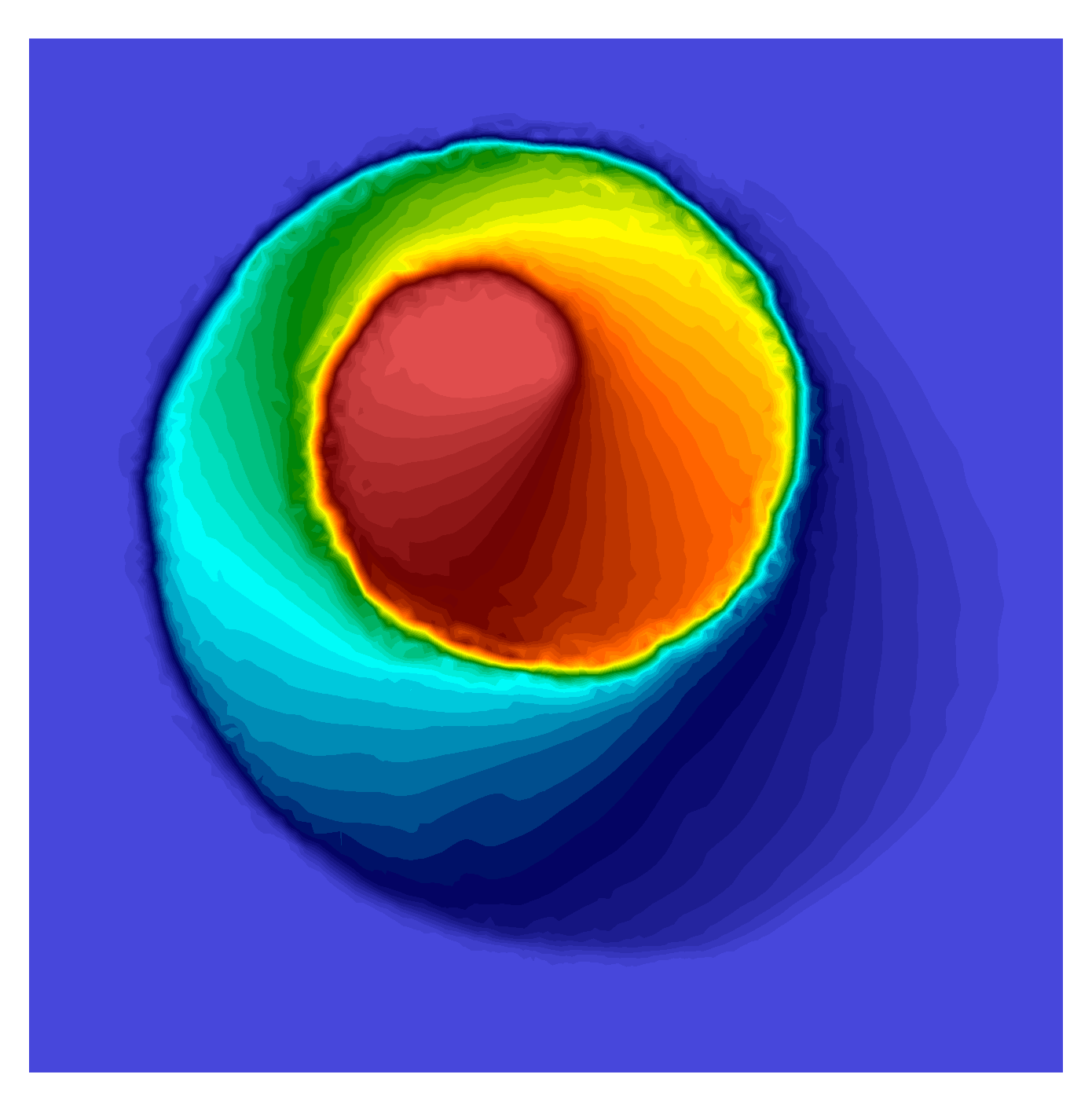}
    \\
    & 13550 & 10565 & 10225 & 8529
    \\[2pt]

    \rotatebox{90}{\small \hspace{-0.3in} dofs \hspace{0.5in} RV}
    &
    \includegraphics[width=0.22\textwidth]{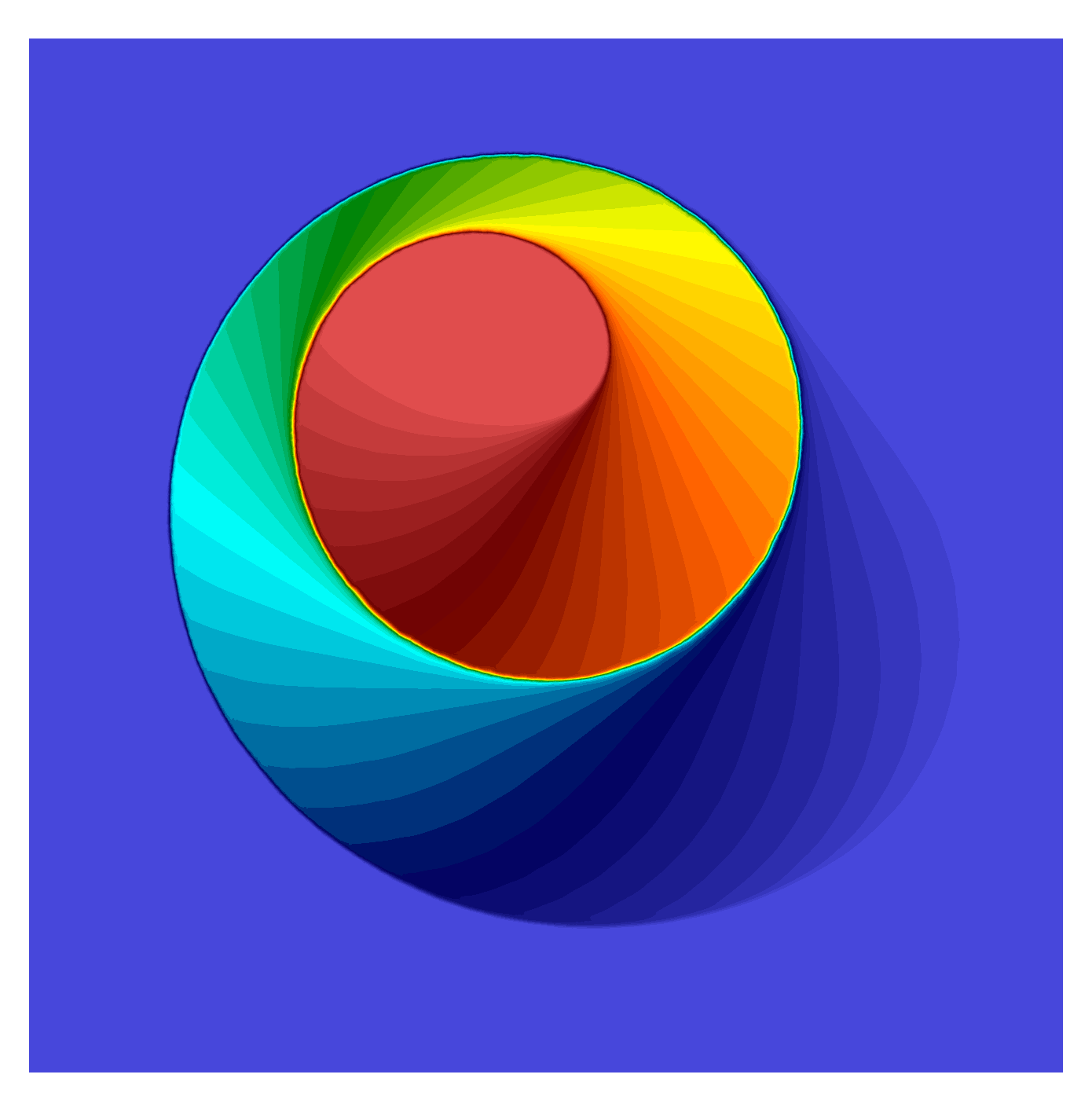}
    &
    \includegraphics[width=0.22\textwidth]{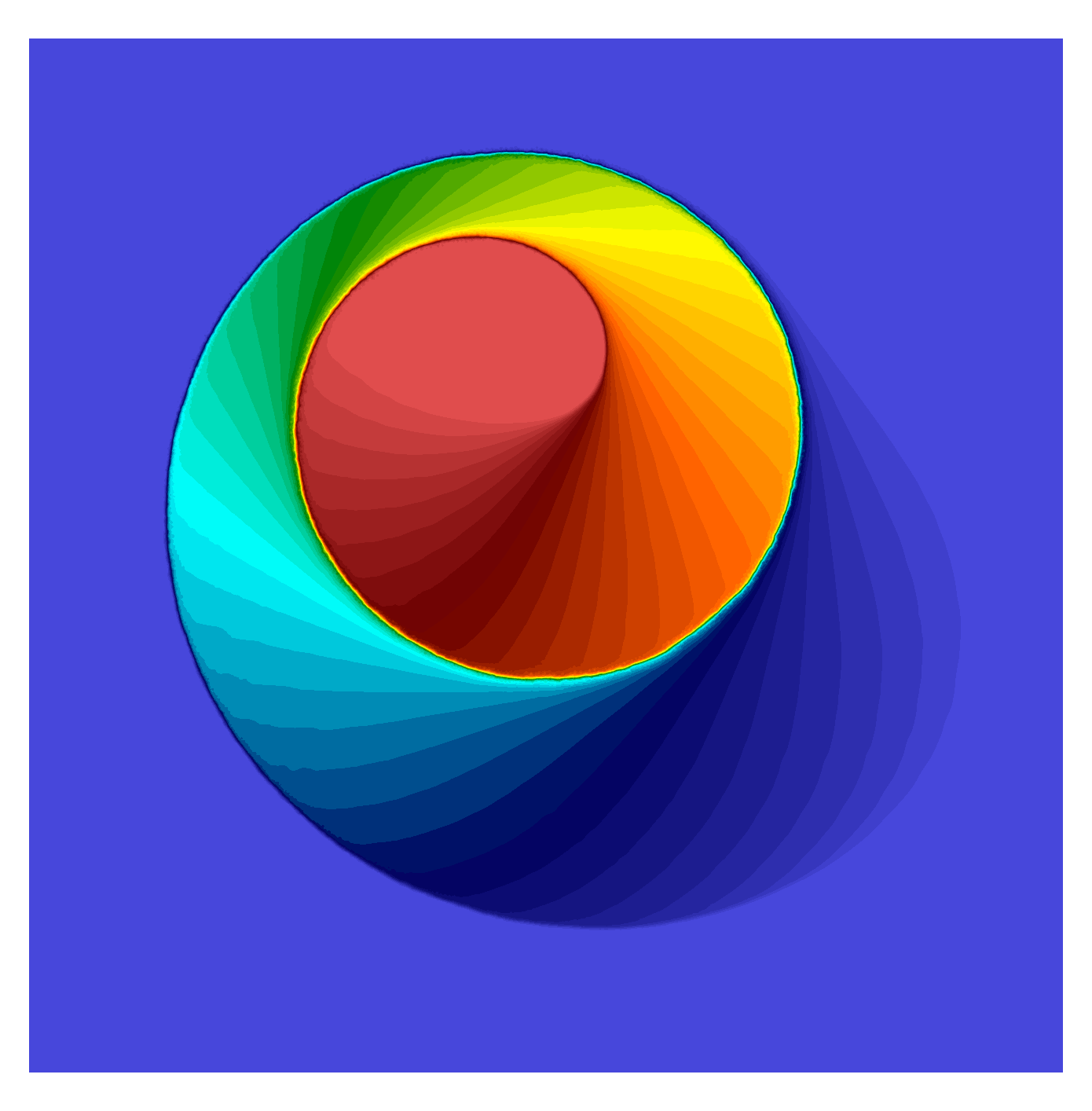}
    &
    \includegraphics[width=0.22\textwidth]{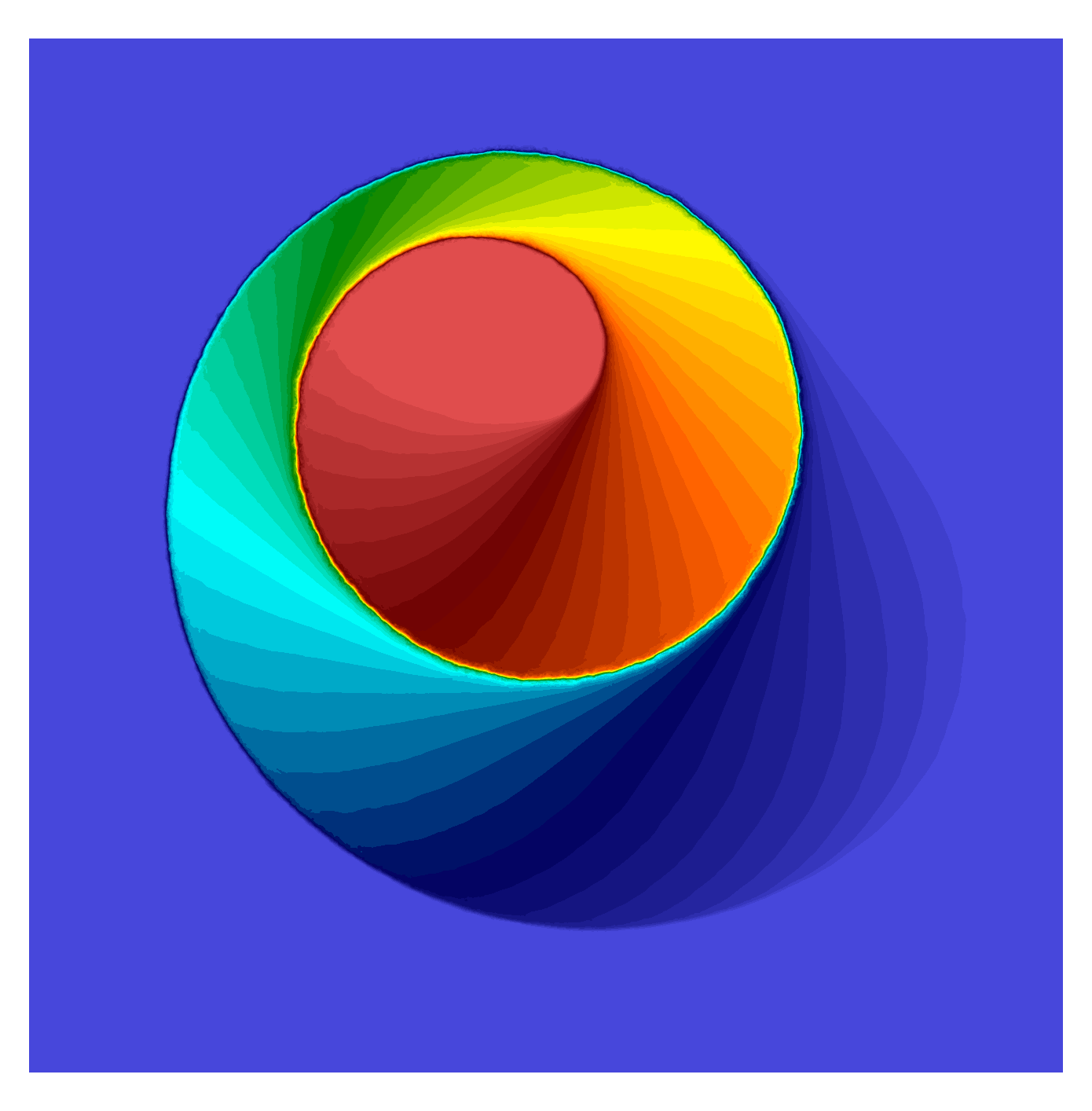}
    &
    \includegraphics[width=0.22\textwidth]{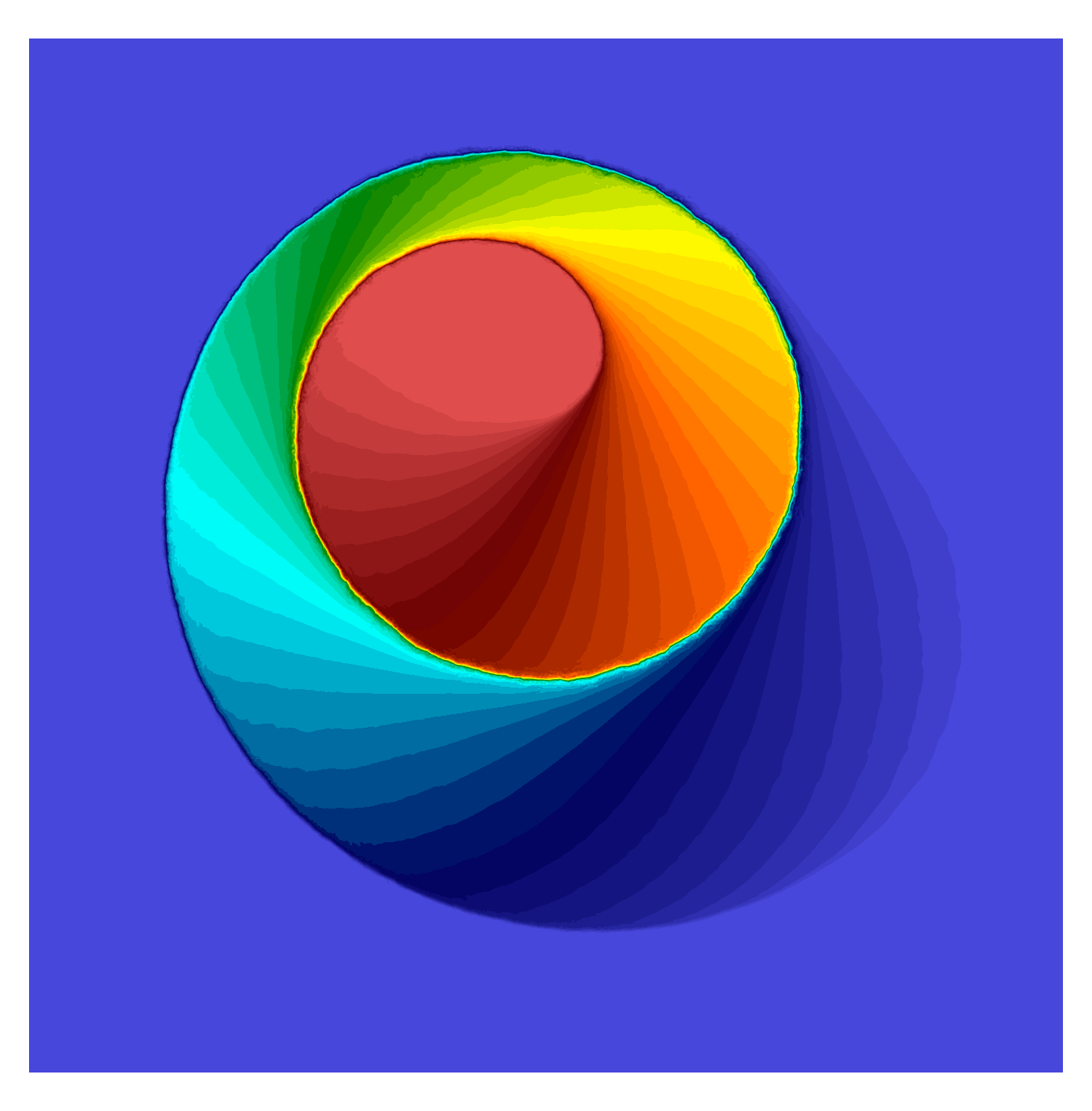}
    \\
    & 286803 & 229785 & 226888 & 214337
    \\[2pt]
    \midrule

    \rotatebox{90}{\small \hspace{-0.3in} dofs \hspace{0.5in} CIP}
    &
    \includegraphics[width=0.22\textwidth]{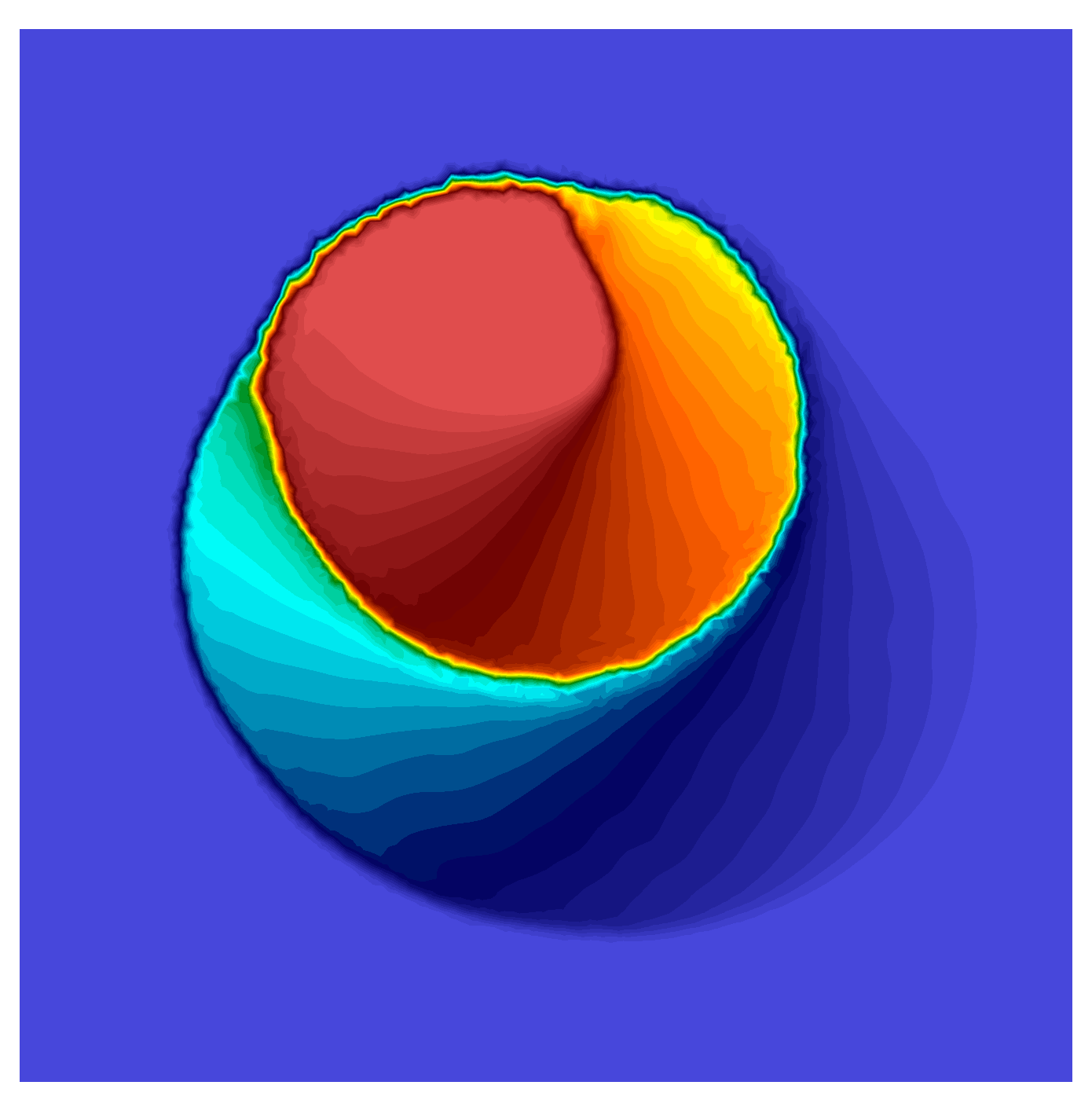}
    &
    \includegraphics[width=0.22\textwidth]{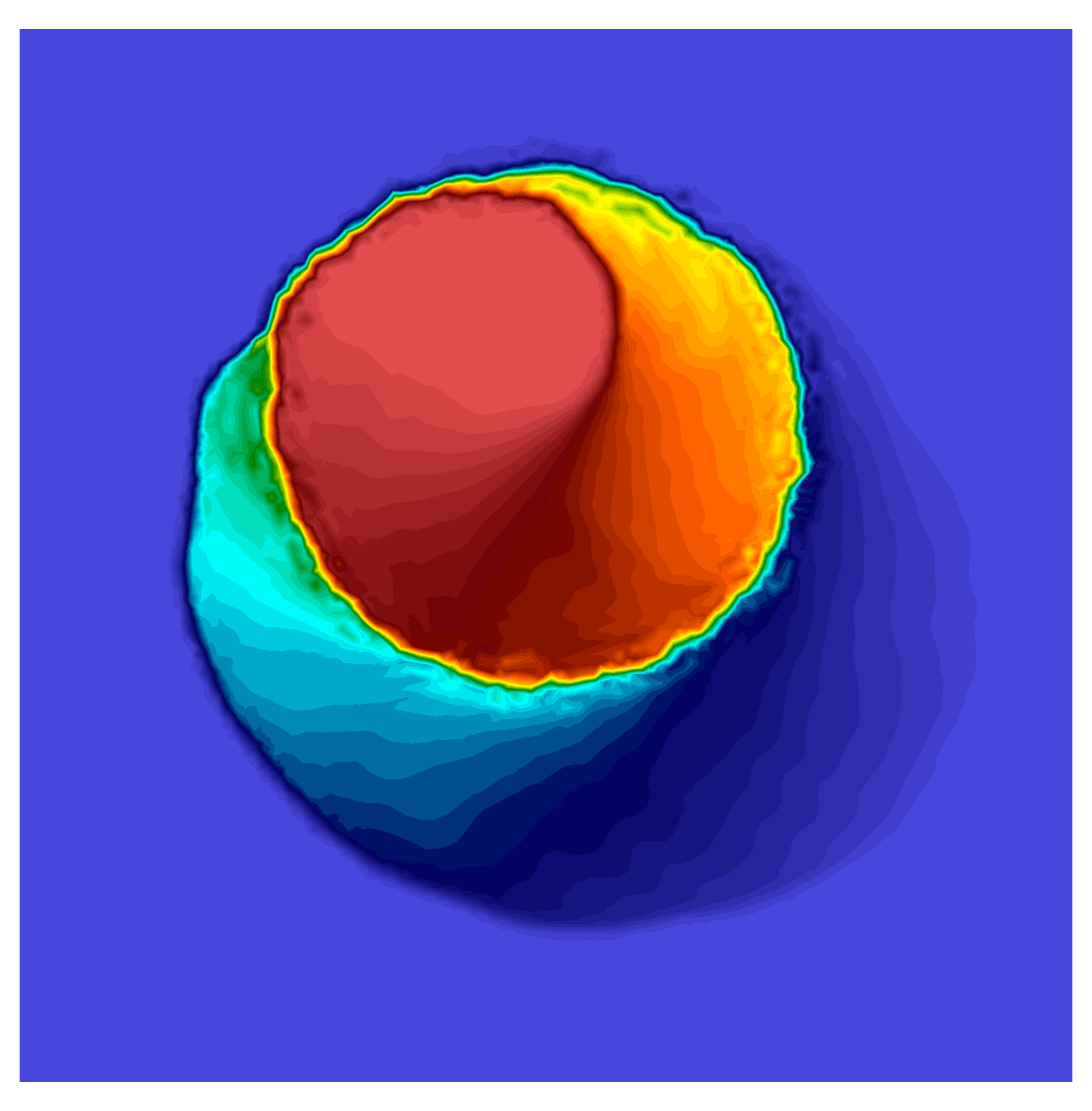}
    &
    \includegraphics[width=0.22\textwidth]{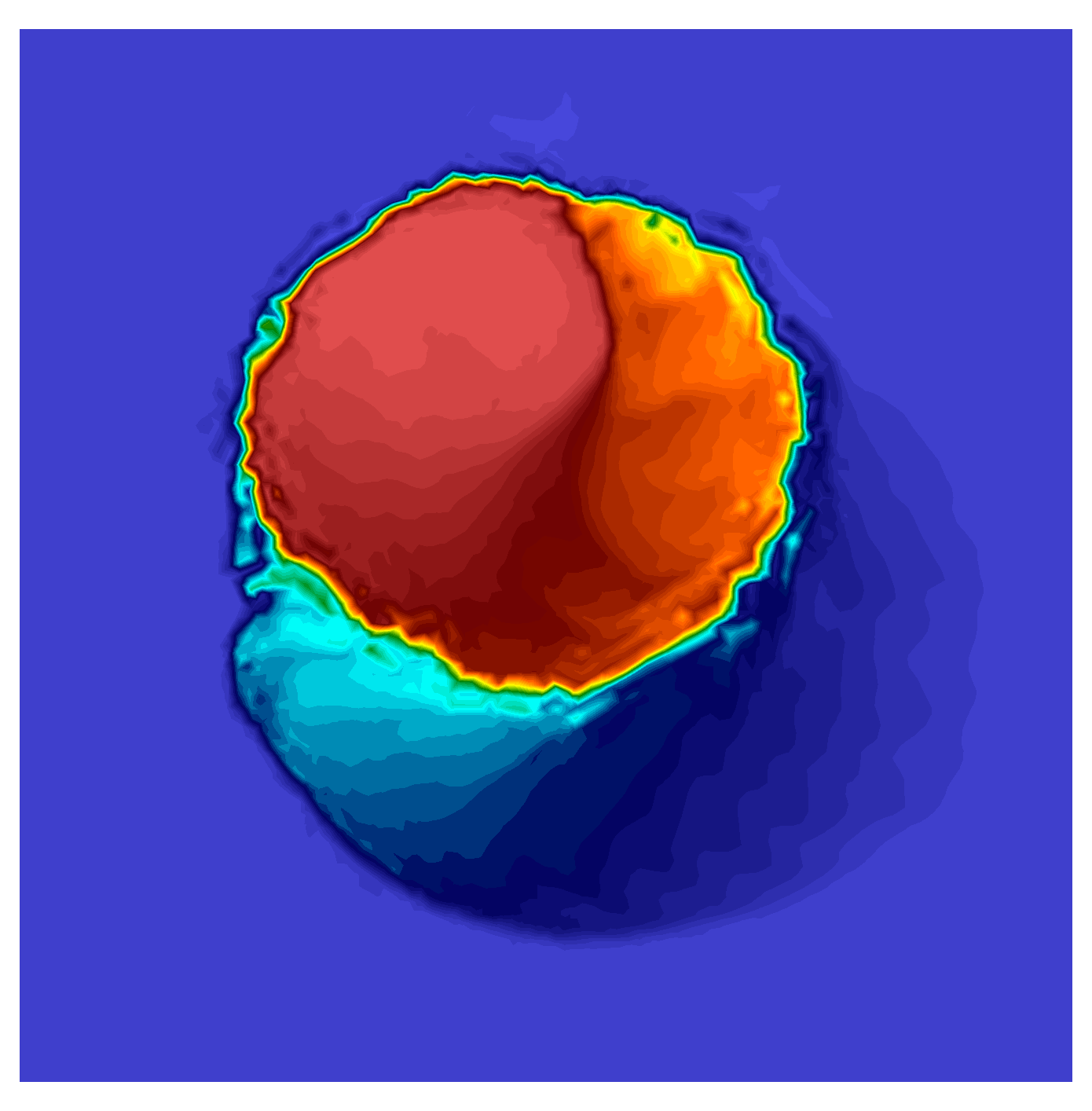}
    &
    \includegraphics[width=0.22\textwidth]{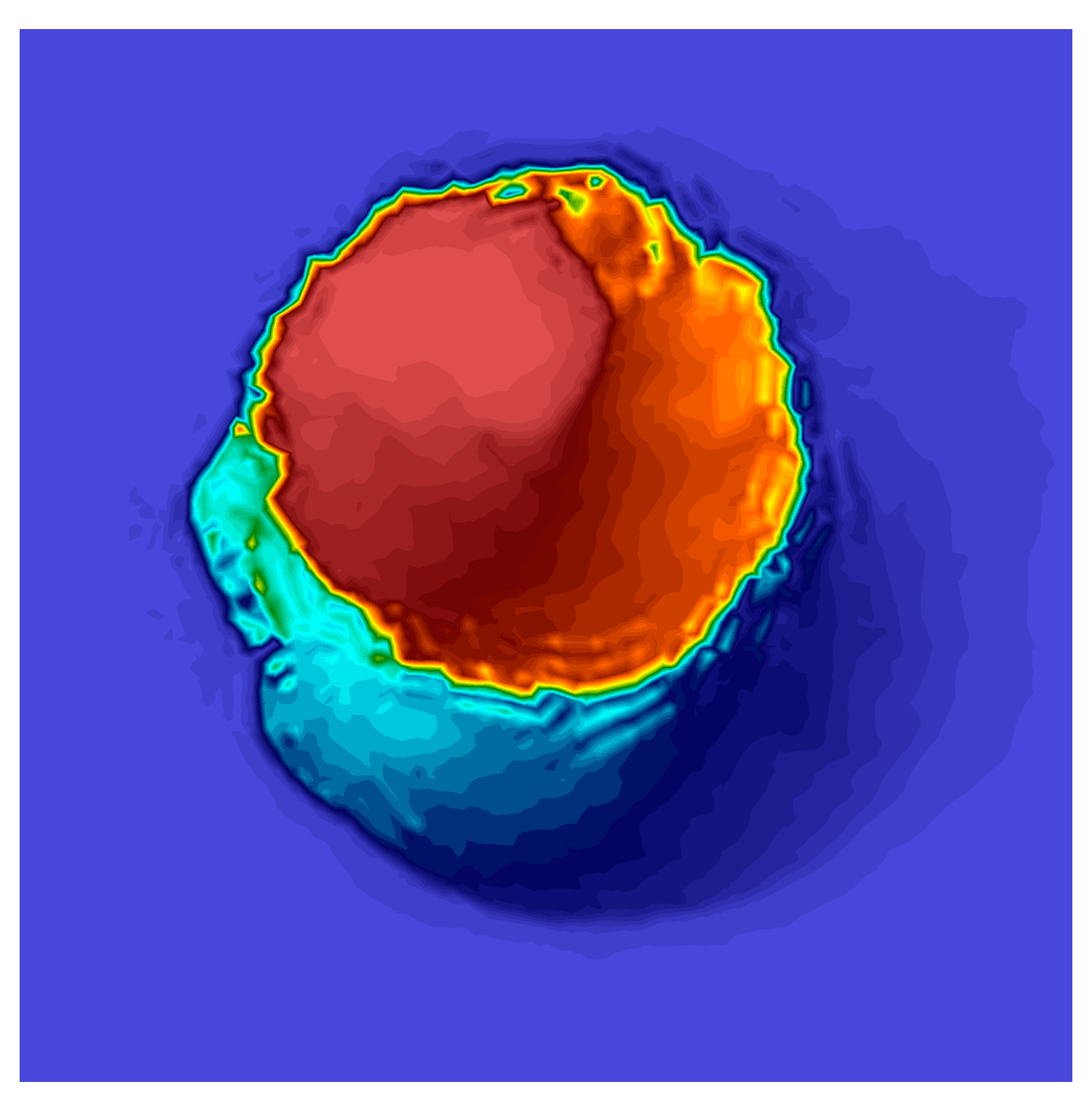}
    \\
    & 13550 & 10565 & 10225 & 8529
    \\[2pt]

    \rotatebox{90}{\small \hspace{-0.3in} dofs \hspace{0.5in} CIP}
    &
    \includegraphics[width=0.22\textwidth]{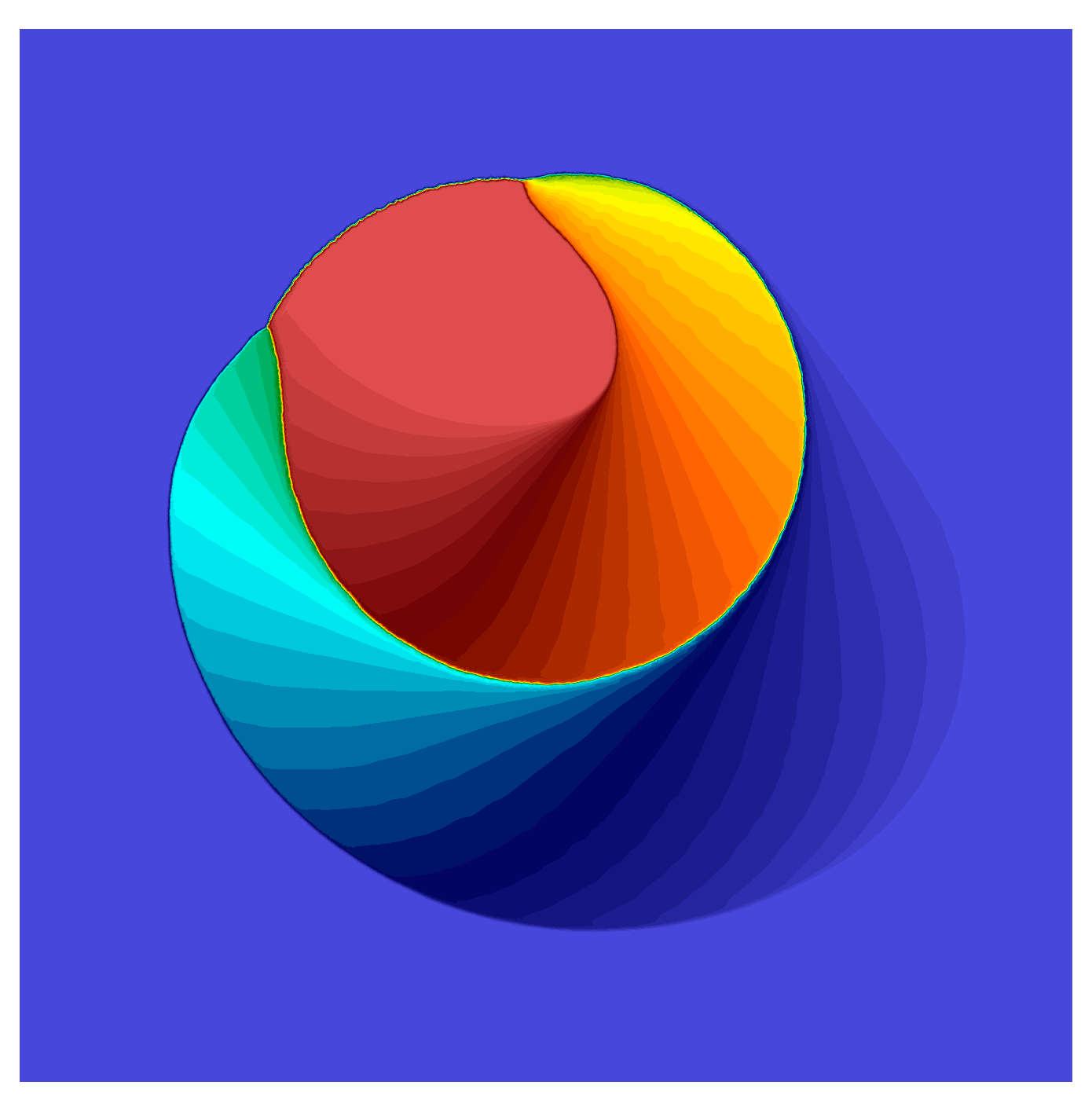}
    &
    \includegraphics[width=0.22\textwidth]{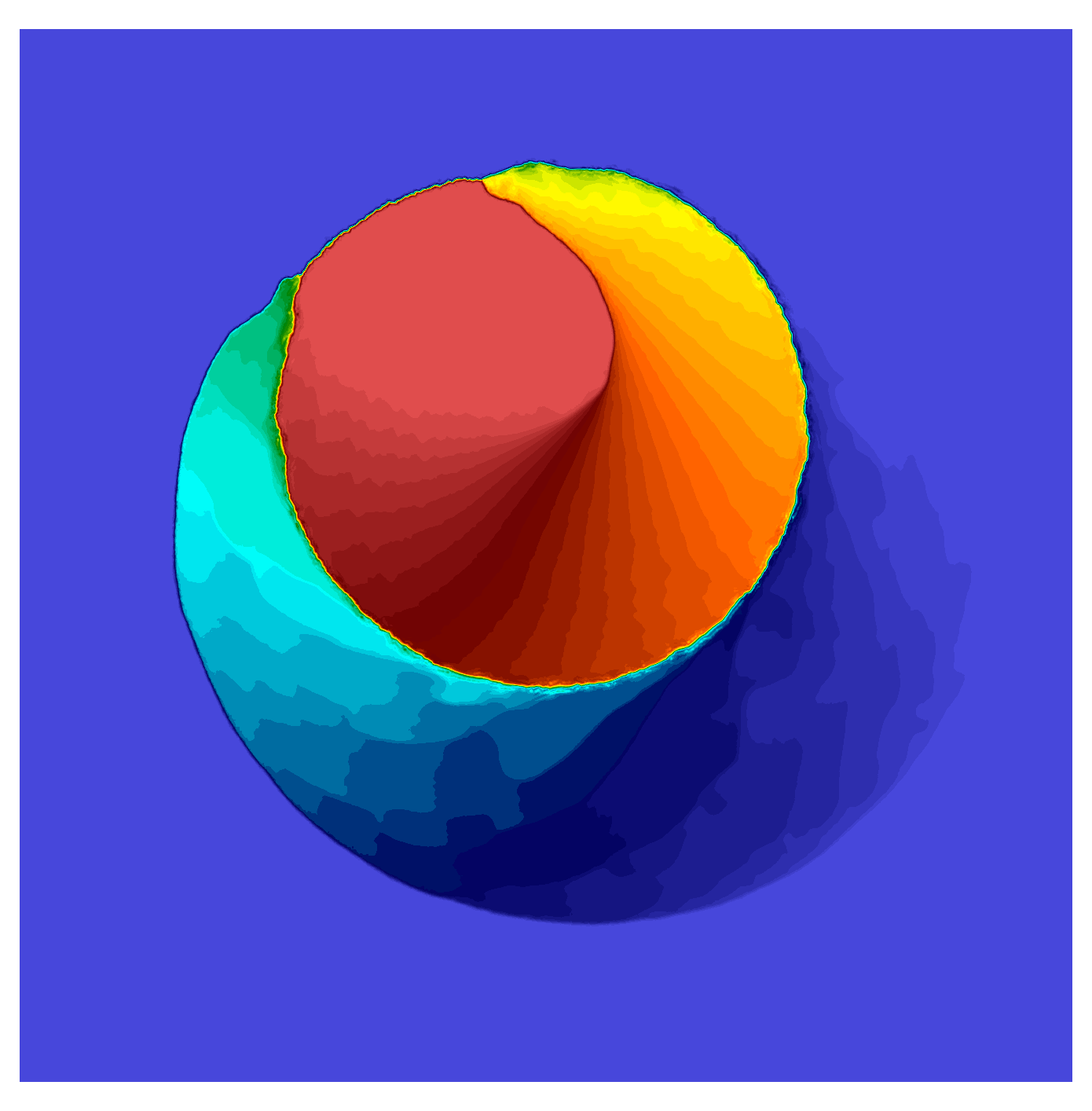}
    &
    \includegraphics[width=0.22\textwidth]{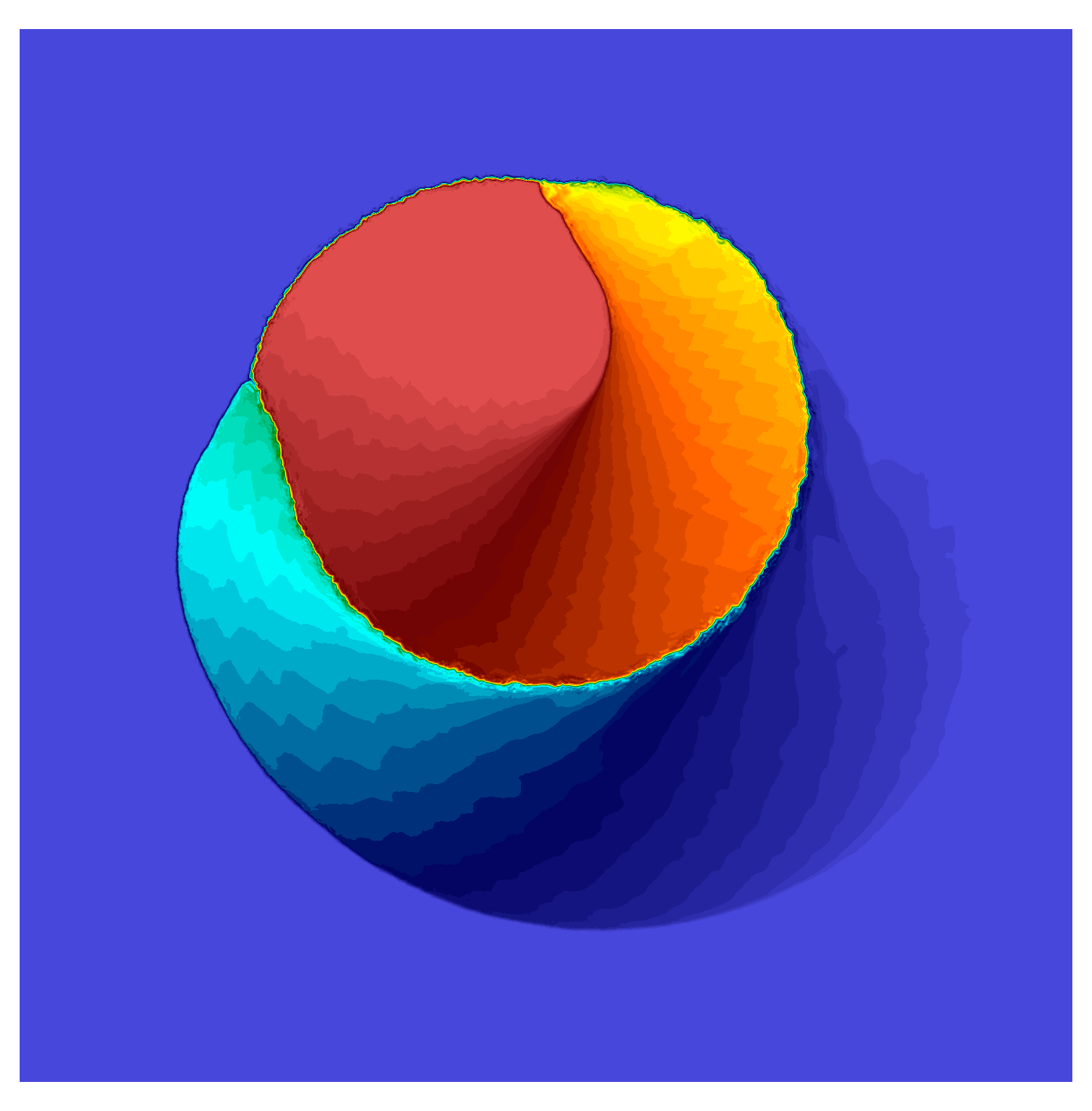}
    &
    \includegraphics[width=0.22\textwidth]{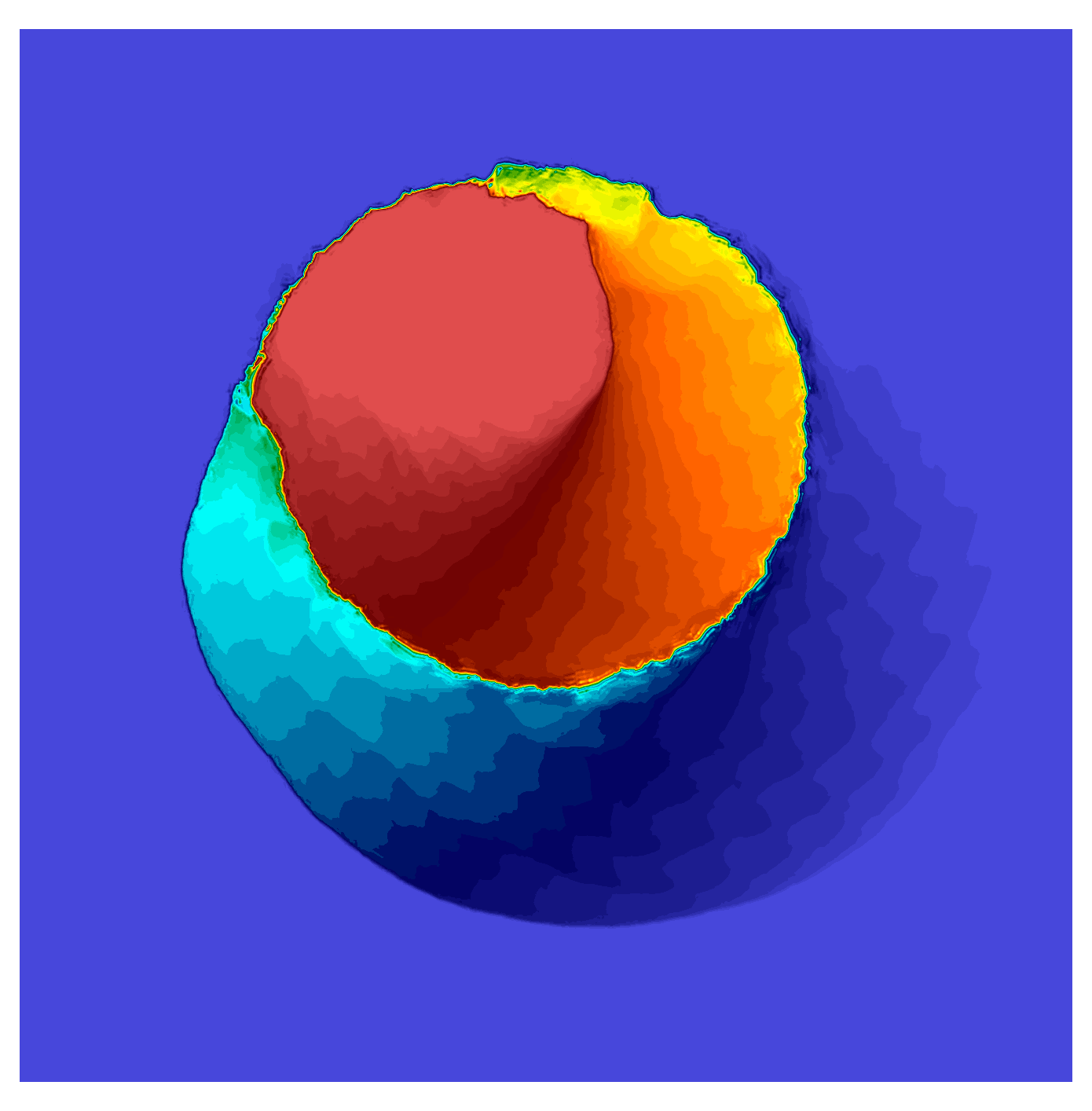}
    \\
    & 286803 & 229785 & 226888 & 214337
    \\
    \bottomrule
  \end{tabular}

  \caption{KPP problem: comparison of RV and CIP limited approximations for the using polynomial degrees $\polP_1$--$\polP_4$.}
  \label{fig:kpp_comparison}
\end{figure}

\bibliographystyle{siam}  
\bibliography{ref}

@article {Guermond_Popov_2017,
    AUTHOR = {Guermond, Jean-Luc and Popov, Bojan},
     TITLE = {Invariant domains and second-order continuous finite element
              approximation for scalar conservation equations},
   JOURNAL = {SIAM J. Numer. Anal.},
  FJOURNAL = {SIAM Journal on Numerical Analysis},
    VOLUME = {55},
      YEAR = {2017},
    NUMBER = {6},
     PAGES = {3120--3146},
      ISSN = {0036-1429},
   MRCLASS = {65M60 (35L45 35L65 65M15)},
  MRNUMBER = {3738312},
MRREVIEWER = {Jos\'{e} Augusto Ferreira},
       DOI = {10.1137/16M1106560},
       URL = {https://doi.org/10.1137/16M1106560},
}

@article {Kurganov_2007,
    AUTHOR = {Kurganov, Alexander and Petrova, Guergana and Popov, Bojan},
     TITLE = {Adaptive semidiscrete central-upwind schemes for nonconvex
              hyperbolic conservation laws},
   JOURNAL = {SIAM J. Sci. Comput.},
  FJOURNAL = {SIAM Journal on Scientific Computing},
    VOLUME = {29},
      YEAR = {2007},
    NUMBER = {6},
     PAGES = {2381--2401},
      ISSN = {1064-8275},
   MRCLASS = {35L65 (65M12)},
  MRNUMBER = {2357619},
       DOI = {10.1137/040614189},
       URL = {https://doi.org/10.1137/040614189},
}

@article{Striernstrom_2021,
    AUTHOR = {Stiernstr\"om, Vidar and Lundgren, Lukas and Nazarov, Murtazo
              and Mattsson, Ken},
     TITLE = {A residual-based artificial viscosity finite difference method
              for scalar conservation laws},
   JOURNAL = {J. Comput. Phys.},
  FJOURNAL = {Journal of Computational Physics},
    VOLUME = 430,
      YEAR = 2021,
     PAGES = {Paper No. 110100, 29},
      ISSN = {0021-9991,1090-2716},
   MRCLASS = {65M06 (35L65 65M12)},
  MRNUMBER = 4208958,
       DOI = {10.1016/j.jcp.2020.110100},
}

@article {Kraaijevanger_1991,
    AUTHOR = {Kraaijevanger, J. F. B. M.},
     TITLE = {Contractivity of {R}unge-{K}utta methods},
   JOURNAL = {BIT},
  FJOURNAL = {BIT. Numerical Mathematics},
    VOLUME = {31},
      YEAR = {1991},
    NUMBER = {3},
     PAGES = {482--528},
      optISSN = {0006-3835},
     CODEN = {NBITAB},
   MRCLASS = {65L06 (65L20 65M12)},
  MRNUMBER = {1127488 (92i:65120)},
MRREVIEWER = {Alexander Ostermann},
       optDOI = {10.1007/BF01933264},
       optURL = {http://dx.doi.org/10.1007/BF01933264},
}

@article {Guermond_etal_2014,
    AUTHOR = {Guermond, Jean-Luc and Nazarov, Murtazo and Popov, Bojan and
              Yang, Yong},
     TITLE = {A second-order maximum principle preserving {L}agrange finite
              element technique for nonlinear scalar conservation equations},
   JOURNAL = {SIAM J. Numer. Anal.},
  FJOURNAL = {SIAM Journal on Numerical Analysis},
    VOLUME = {52},
      YEAR = {2014},
    NUMBER = {4},
     PAGES = {2163--2182},
      optISSN = {0036-1429},
   MRCLASS = {65M60 (35L65 65M12)},
  MRNUMBER = {3249370},
MRREVIEWER = {Jezabel Curbelo},
       optDOI = {10.1137/130950240},
       optURL = {http://dx.doi.org/10.1137/130950240},
}

@article {Kuzmin_2020,
    AUTHOR = {Kuzmin, Dmitri},
     TITLE = {Monolithic convex limiting for continuous finite element
              discretizations of hyperbolic conservation laws},
   JOURNAL = {Comput. Methods Appl. Mech. Engrg.},
  FJOURNAL = {Computer Methods in Applied Mechanics and Engineering},
    VOLUME = {361},
      YEAR = {2020},
     PAGES = {112804, 28},
      ISSN = {0045-7825,1879-2138},
   MRCLASS = {65M60 (70S10)},
  MRNUMBER = {4050715},
       DOI = {10.1016/j.cma.2019.112804},
       URL = {https://doi.org/10.1016/j.cma.2019.112804},
}

@article {Guermond_Nazarov_Popov_Tomas_2018,
    AUTHOR = {Guermond, Jean-Luc and Nazarov, Murtazo and Popov, Bojan and
              Tomas, Ignacio},
     TITLE = {Second-order invariant domain preserving approximation of the
              {E}uler equations using convex limiting},
   JOURNAL = {SIAM J. Sci. Comput.},
  FJOURNAL = {SIAM Journal on Scientific Computing},
    VOLUME = {40},
      YEAR = {2018},
    NUMBER = {5},
     PAGES = {A3211--A3239},
      optISSN = {1064-8275},
   MRCLASS = {65M60 (35L45 35L65 35Q31 65M12 76Nxx)},
  MRNUMBER = {3860123},
       optDOI = {10.1137/17M1149961},
       optURL = {https://doi.org/10.1137/17M1149961},
}

@article {Sanders_1988,
    AUTHOR = {Sanders, Richard},
     TITLE = {A third-order accurate variation nonexpansive difference
              scheme for single nonlinear conservation laws},
   JOURNAL = {Math. Comp.},
  FJOURNAL = {Mathematics of Computation},
    VOLUME = {51},
      YEAR = {1988},
    NUMBER = {184},
     PAGES = {535--558},
      optISSN = {0025-5718},
   MRCLASS = {65M10 (35L65)},
  MRNUMBER = {935073},
MRREVIEWER = {J. M. Sanz-Serna},
       optDOI = {10.2307/2008762},
       optURL = {https://doi.org/10.2307/2008762},
}

@article {Liu_Tadmor_1998,
    AUTHOR = {Liu, Xu-Dong and Tadmor, Eitan},
     TITLE = {Third order nonoscillatory central scheme for hyperbolic
              conservation laws},
   JOURNAL = {Numer. Math.},
  FJOURNAL = {Numerische Mathematik},
    VOLUME = {79},
      YEAR = {1998},
    NUMBER = {3},
     PAGES = {397--425},
      optISSN = {0029-599X},
   MRCLASS = {65M06 (35L65)},
  MRNUMBER = {1626324},
MRREVIEWER = {Bruno Scheurer},
       optDOI = {10.1007/s002110050345},
       optURL = {https://doi.org/10.1007/s002110050345},
}

@article {Zhang_2010,
    AUTHOR = {Zhang, Xiangxiong and Shu, Chi-Wang},
     TITLE = {On maximum-principle-satisfying high order schemes for scalar
              conservation laws},
   JOURNAL = {J. Comput. Phys.},
  FJOURNAL = {Journal of Computational Physics},
    VOLUME = {229},
      YEAR = {2010},
    NUMBER = {9},
     PAGES = {3091--3120},
      ISSN = {0021-9991,1090-2716},
   MRCLASS = {65M08 (35B50 35L65)},
  MRNUMBER = {2601091},
       DOI = {10.1016/j.jcp.2009.12.030},
       URL = {https://doi.org/10.1016/j.jcp.2009.12.030},
}

@article {Zhang_Xia_Shu_2012,
    AUTHOR = {Zhang, Xiangxiong and Xia, Yinhua and Shu, Chi-Wang},
     TITLE = {Maximum-principle-satisfying and positivity-preserving high
              order discontinuous {G}alerkin schemes for conservation laws
              on triangular meshes},
   JOURNAL = {J. Sci. Comput.},
  FJOURNAL = {Journal of Scientific Computing},
    VOLUME = {50},
      YEAR = {2012},
    NUMBER = {1},
     PAGES = {29--62},
      optISSN = {0885-7474},
   MRCLASS = {65M60 (35L04 35L65 65M08 76M10 76M12)},
  MRNUMBER = {2886318},
MRREVIEWER = {Francesco Zirilli},
       optDOI = {10.1007/s10915-011-9472-8},
       optURL = {https://doi.org/10.1007/s10915-011-9472-8},
}

@article {Panzer_2021,
    AUTHOR = {Pazner, Will},
     TITLE = {Sparse invariant domain preserving discontinuous {G}alerkin
              methods with subcell convex limiting},
   JOURNAL = {Comput. Methods Appl. Mech. Engrg.},
  FJOURNAL = {Computer Methods in Applied Mechanics and Engineering},
    VOLUME = {382},
      YEAR = {2021},
     PAGES = {Paper No. 113876, 28},
      ISSN = {0045-7825,1879-2138},
   MRCLASS = {65M60 (35L65)},
  MRNUMBER = {4251528},
       DOI = {10.1016/j.cma.2021.113876},
       URL = {https://doi.org/10.1016/j.cma.2021.113876},
}

@article {Hajduk_2021,
    AUTHOR = {Hajduk, Hennes},
     TITLE = {Monolithic convex limiting in discontinuous {G}alerkin
              discretizations of hyperbolic conservation laws},
   JOURNAL = {Comput. Math. Appl.},
  FJOURNAL = {Computers \& Mathematics with Applications. An International
              Journal},
    VOLUME = {87},
      YEAR = {2021},
     PAGES = {120--138},
      ISSN = {0898-1221,1873-7668},
   MRCLASS = {65M60 (35Q35 76N15)},
  MRNUMBER = {4226303},
MRREVIEWER = {Yibing\ Chen},
       DOI = {10.1016/j.camwa.2021.02.012},
       URL = {https://doi.org/10.1016/j.camwa.2021.02.012},
}

@article {Kuzmin_2002,
    AUTHOR = {Kuzmin, D. and Turek, S.},
     TITLE = {Flux correction tools for finite elements},
   JOURNAL = {J. Comput. Phys.},
  FJOURNAL = {Journal of Computational Physics},
    VOLUME = {175},
      YEAR = {2002},
    NUMBER = {2},
     PAGES = {525--558},
      ISSN = {0021-9991,1090-2716},
   MRCLASS = {76M10 (65M60 76R99)},
  MRNUMBER = {1880117},
MRREVIEWER = {Ramon\ Codina},
       DOI = {10.1006/jcph.2001.6955},
       URL = {https://doi.org/10.1006/jcph.2001.6955},
}

@incollection {Kuzmin_2005,
    AUTHOR = {Kuzmin, Dmitri and M\"oller, Matthias},
     TITLE = {Algebraic flux correction. {I}. {S}calar conservation laws},
 BOOKTITLE = {Flux-corrected transport},
    SERIES = {Sci. Comput.},
     PAGES = {155--206},
 PUBLISHER = {Springer, Berlin},
      YEAR = {2005},
      ISBN = {3-540-23730-5},
   MRCLASS = {76M25 (65M60)},
  MRNUMBER = {2129255},
       DOI = {10.1007/3-540-27206-2\_6},
       URL = {https://doi.org/10.1007/3-540-27206-2_6},
}

@article {Guermond_Nazarov_2014,
    AUTHOR = {Guermond, Jean-Luc and Nazarov, Murtazo},
     TITLE = {A maximum-principle preserving {$C^0$} finite element method
              for scalar conservation equations},
   JOURNAL = {Comput. Methods Appl. Mech. Engrg.},
  FJOURNAL = {Computer Methods in Applied Mechanics and Engineering},
    VOLUME = {272},
      YEAR = {2014},
     PAGES = {198--213},
      optISSN = {0045-7825},
   MRCLASS = {65M60},
  MRNUMBER = {3171280},
       optDOI = {10.1016/j.cma.2013.12.015},
       optURL = {https://doi.org/10.1016/j.cma.2013.12.015},
}

@article {Guermond_Nazarov_Popov_Yong_2014,
    AUTHOR = {Guermond, Jean-Luc and Nazarov, Murtazo and Popov, Bojan and
              Yang, Yong},
     TITLE = {A second-order maximum principle preserving {L}agrange finite
              element technique for nonlinear scalar conservation equations},
   JOURNAL = {SIAM J. Numer. Anal.},
  FJOURNAL = {SIAM Journal on Numerical Analysis},
    VOLUME = {52},
      YEAR = {2014},
    NUMBER = {4},
     PAGES = {2163--2182},
      optISSN = {0036-1429},
   MRCLASS = {65M60 (35L65 65M12)},
  MRNUMBER = {3249370},
MRREVIEWER = {Jezabel Curbelo},
       optDOI = {10.1137/130950240},
       optURL = {https://doi.org/10.1137/130950240},
}

@article{Guermond_popov_second_order_2018,
    AUTHOR = {Guermond, Jean-Luc and Popov, Bojan},
     TITLE = {Invariant {D}omains and {S}econd-{O}rder {C}ontinuous {F}inite
              {E}lement {A}pproximation for {S}calar {C}onservation
              {E}quations},
   JOURNAL = {SIAM J. Numer. Anal.},
  FJOURNAL = {SIAM Journal on Numerical Analysis},
    VOLUME = {55},
      YEAR = {2017},
    NUMBER = {6},
     PAGES = {3120--3146},
      optISSN = {0036-1429},
   MRCLASS = {65M60 (35L45 35L65 65M15)},
  MRNUMBER = {3738312},
       optURL = {https://doi.org/10.1137/16M1106560},
}

@article {Guermond_pasquetti_popov_JCP_2011,
    AUTHOR = {Guermond, Jean-Luc and Pasqueti, Richard and Popov, Bojan},
     TITLE = {Entropy viscosity method for nonlinear conservation laws},
   JOURNAL = {J. Comput. Phys.},
  FJOURNAL = {Journal of Computational Physics},
    VOLUME = {230},
      YEAR = {2011},
    NUMBER = {11},
     PAGES = {4248--4267},
      OPTISSN = {0021-9991},
     CODEN = {JCTPAH},
   MRCLASS = {65M60 (65M70)},
  MRNUMBER = {2787948 (2012h:65216)},
MRREVIEWER = {Mohammad Asadzadeh},
       OPTDOI = {10.1016/j.jcp.2010.11.043},
       OPTURL = {http://dx.doi.org.lib-ezproxy.tamu.edu:2048/10.1016/j.jcp.2010.11.043},
}

@article {Nazarov_Larcher_2017,
    AUTHOR = {Nazarov, Murtazo and Larcher, Aur\'elien},
     TITLE = {Numerical investigation of a viscous regularization of the
              {E}uler equations by entropy viscosity},
   JOURNAL = {Comput. Methods Appl. Mech. Engrg.},
  FJOURNAL = {Computer Methods in Applied Mechanics and Engineering},
    VOLUME = {317},
      YEAR = {2017},
     PAGES = {128--152},
      ISSN = {0045-7825},
   MRCLASS = {76D09},
  MRNUMBER = {3612753},
}

@article {Zalesak_1979,
    AUTHOR = {Zalesak, Steven T.},
     TITLE = {Fully multidimensional flux-corrected transport algorithms for
              fluids},
   JOURNAL = {J. Comput. Phys.},
  FJOURNAL = {Journal of Computational Physics},
    VOLUME = {31},
      YEAR = {1979},
    NUMBER = {3},
     PAGES = {335--362},
      optISSN = {0021-9991},
     CODEN = {JCTPAH},
   MRCLASS = {76X05},
  MRNUMBER = {534786 (80f:76048)},
}

@incollection {Kuzmin_Moller_2005,
    AUTHOR = {Kuzmin, Dmitri and M\"oller, Matthias},
     TITLE = {Algebraic flux correction. {I}. {S}calar conservation laws},
 BOOKTITLE = {Flux-corrected transport},
    SERIES = {Sci. Comput.},
     PAGES = {155--206},
 PUBLISHER = {Springer, Berlin},
      YEAR = {2005},
      ISBN = {3-540-23730-5},
   MRCLASS = {76M25 (65M60)},
  MRNUMBER = {2129255},
       DOI = {10.1007/3-540-27206-2\_6},
       URL = {https://doi.org/10.1007/3-540-27206-2_6},
}

@article {Guermond_Nazarov_Popov_2024,
    AUTHOR = {Guermond, Jean-Luc and Nazarov, Murtazo and Popov, Bojan},
     TITLE = {Finite element-based invariant-domain preserving approximation
              of hyperbolic systems: beyond second-order accuracy in space},
   JOURNAL = {Comput. Methods Appl. Mech. Engrg.},
  FJOURNAL = {Computer Methods in Applied Mechanics and Engineering},
    VOLUME = {418},
      YEAR = {2024},
     PAGES = {Paper No. 116470, 22},
      ISSN = {0045-7825,1879-2138},
   MRCLASS = {65M60 (35L45 35L65 65M12)},
  MRNUMBER = {4655640},
       DOI = {10.1016/j.cma.2023.116470},
       URL = {https://doi.org/10.1016/j.cma.2023.116470},
}

@article {Guermond_Popov_2016,
    AUTHOR = {Guermond, Jean-Luc and Popov, Bojan},
     TITLE = {Invariant domains and first-order continuous finite element
              approximation for hyperbolic systems},
   JOURNAL = {SIAM J. Numer. Anal.},
  FJOURNAL = {SIAM Journal on Numerical Analysis},
    VOLUME = {54},
      YEAR = {2016},
    NUMBER = {4},
     PAGES = {2466--2489},
      OptISSN = {0036-1429},
   MRCLASS = {65M60 (35L45 35L65 65M12)},
  MRNUMBER = {3537013},
MRREVIEWER = {Beny Neta},
       OptDOI = {10.1137/16M1074291},
       OptURL = {https://doi.org/10.1137/16M1074291},
}

@article {Guermond_Popov_Yang_2017,
    AUTHOR = {Guermond, Jean-Luc and Popov, Bojan and Yang, Yong},
     TITLE = {The effect of the consistent mass matrix on the
              maximum-principle for scalar conservation equations},
   JOURNAL = {J. Sci. Comput.},
  FJOURNAL = {Journal of Scientific Computing},
    VOLUME = {70},
      YEAR = {2017},
    NUMBER = {3},
     PAGES = {1358--1366},
      ISSN = {0885-7474,1573-7691},
   MRCLASS = {65M60 (35L65)},
  MRNUMBER = {3608343},
MRREVIEWER = {Nicolae\ Pop},
       DOI = {10.1007/s10915-016-0285-7},
       URL = {https://doi.org/10.1007/s10915-016-0285-7},
}

@incollection {Douglas_Dupont_1976,
    AUTHOR = {Douglas, Jr., Jim and Dupont, Todd},
     TITLE = {Interior penalty procedures for elliptic and parabolic
              {G}alerkin methods},
 BOOKTITLE = {Computing methods in applied sciences ({S}econd {I}nternat.
              {S}ympos., {V}ersailles, 1975)},
    SERIES = {Lecture Notes in Phys.},
    VOLUME = {Vol. 58},
     PAGES = {207--216},
 PUBLISHER = {Springer, Berlin-New York},
      YEAR = {1976},
   MRCLASS = {65N30},
  MRNUMBER = {440955},
MRREVIEWER = {J.\ R.\ Cannon},
}

@article {Burman_Hansbo_2004,
    AUTHOR = {Burman, Erik and Hansbo, Peter},
     TITLE = {Edge stabilization for {G}alerkin approximations of
              convection-diffusion-reaction problems},
   JOURNAL = {Comput. Methods Appl. Mech. Engrg.},
  FJOURNAL = {Computer Methods in Applied Mechanics and Engineering},
    VOLUME = {193},
      YEAR = {2004},
    NUMBER = {15-16},
     PAGES = {1437--1453},
      ISSN = {0045-7825,1879-2138},
   MRCLASS = {65N12 (35J25 65N30 76M10 76R99)},
  MRNUMBER = {2068903},
       DOI = {10.1016/j.cma.2003.12.032},
       URL = {https://doi.org/10.1016/j.cma.2003.12.032},
}

@article {Nazarov_Hoffman_2013,
    AUTHOR = {Nazarov, Murtazo and Hoffman, Johan},
     TITLE = {Residual-based artificial viscosity for simulation of
              turbulent compressible flow using adaptive finite element
              methods},
   JOURNAL = {Internat. J. Numer. Methods Fluids},
  FJOURNAL = {International Journal for Numerical Methods in Fluids},
    VOLUME = {71},
      YEAR = {2013},
    NUMBER = {3},
     PAGES = {339--357},
      ISSN = {0271-2091,1097-0363},
   MRCLASS = {76F65 (76M10 76Nxx)},
  MRNUMBER = {3008293},
       DOI = {10.1002/fld.3663},
       URL = {https://doi.org/10.1002/fld.3663},
}

@article {Nazarov_2013,
    AUTHOR = {Nazarov, Murtazo},
     TITLE = {Convergence of a residual based artificial viscosity finite
              element method},
   JOURNAL = {Comput. Math. Appl.},
  FJOURNAL = {Computers \& Mathematics with Applications. An International
              Journal},
    VOLUME = {65},
      YEAR = {2013},
    NUMBER = {4},
     PAGES = {616--626},
      ISSN = {0898-1221,1873-7668},
   MRCLASS = {65M60},
  MRNUMBER = {3011445},
       DOI = {10.1016/j.camwa.2012.11.003},
       URL = {https://doi.org/10.1016/j.camwa.2012.11.003},
}

@article {Dao_Nazarov_2022,
    AUTHOR = {Dao, Tuan Anh and Nazarov, Murtazo},
     TITLE = {A high-order residual-based viscosity finite element method
              for the ideal {MHD} equations},
   JOURNAL = {J. Sci. Comput.},
  FJOURNAL = {Journal of Scientific Computing},
    VOLUME = {92},
      YEAR = {2022},
    NUMBER = {3},
     PAGES = {Paper No. 77, 24},
      ISSN = {0885-7474,1573-7691},
   MRCLASS = {65M60 (65M12 65M15 76W05)},
  MRNUMBER = {4456197},
MRREVIEWER = {Xiaodi\ Zhang},
       DOI = {10.1007/s10915-022-01918-4},
       URL = {https://doi.org/10.1007/s10915-022-01918-4},
}

@article {Ern_Guermond_2013,
    AUTHOR = {Ern, Alexandre and Guermond, Jean-Luc},
     TITLE = {Weighting the edge stabilization},
   JOURNAL = {SIAM J. Numer. Anal.},
  FJOURNAL = {SIAM Journal on Numerical Analysis},
    VOLUME = {51},
      YEAR = {2013},
    NUMBER = {3},
     PAGES = {1655--1677},
      ISSN = {0036-1429,1095-7170},
   MRCLASS = {65M60 (35L50 35L65 65M12)},
  MRNUMBER = {3062586},
MRREVIEWER = {Rajen\ Kumar\ Sinha},
       DOI = {10.1137/120867482},
       URL = {https://doi.org/10.1137/120867482},
}

@article {Overton-Katz_et_al_2023,
    AUTHOR = {Overton-Katz, Nathaniel and Gao, Xinfeng and Johansen, Hans
              and Guzik, Stephen M.},
     TITLE = {Adaptive clipping-and-redistribution algorithms for bounded
              and conservative high-order interpolations applied to
              discontinuous and reactive flows},
   JOURNAL = {Internat. J. Numer. Methods Fluids},
  FJOURNAL = {International Journal for Numerical Methods in Fluids},
    VOLUME = {95},
      YEAR = {2023},
    NUMBER = {5},
     PAGES = {710--742},
      ISSN = {0271-2091,1097-0363},
   MRCLASS = {80A32 (65M08)},
  MRNUMBER = {4592108},
       DOI = {10.1002/fld.5165},
       URL = {https://doi.org/10.1002/fld.5165},
}

@article{Dzanic_Trojak_Witherden_2023,
    AUTHOR = {Dzanic, T. and Trojak, W. and Witherden, F. D.},
     TITLE = {Bounds preserving temporal integration methods for hyperbolic
              conservation laws},
   JOURNAL = {Comput. Math. Appl.},
  FJOURNAL = {Computers \& Mathematics with Applications. An International
              Journal},
    VOLUME = {135},
      YEAR = {2023},
     PAGES = {6--18},
      ISSN = {0898-1221,1873-7668},
   MRCLASS = {65M12 (35L65 65L06 65M22)},
  MRNUMBER = {4542510},
       DOI = {10.1016/j.camwa.2023.01.023},
       URL = {https://doi.org/10.1016/j.camwa.2023.01.023},
}

@article {Guermond_Wang_2025,
    AUTHOR = {Guermond, Jean-Luc and Wang, Zuodong},
     TITLE = {Mass conservative limiting and applications to the
              approximation of the steady-state radiation transport
              equations},
   JOURNAL = {J. Comput. Phys.},
  FJOURNAL = {Journal of Computational Physics},
    VOLUME = {521},
      YEAR = {2025},
     PAGES = {Paper No. 113531, 20},
      ISSN = {0021-9991,1090-2716},
   MRCLASS = {65M12 (35L65 65M60 76V05)},
  MRNUMBER = {4821700},
       DOI = {10.1016/j.jcp.2024.113531},
       URL = {https://doi.org/10.1016/j.jcp.2024.113531},
}

@article{Kuzmin_2000,
author = {Kuzmin, Dmitri},
title = {A high-resolution finite element scheme for convection-dominated transport},
journal = {Communications in Numerical Methods in Engineering},
volume = {16},
number = {3},
pages = {215-223},
doi = {https://doi.org/10.1002/(SICI)1099-0887(200003)16:3<215::AID-CNM326>3.0.CO;2-1},
year = {2000}
}
\end{document}